\newcommand{\cs}{\ensuremath{C^\ast}}
\newcommand{\vf}{\varphi}
\newcommand{\la}{\left\langle}
\newcommand{\NN}{\mathcal{N}}
\newcommand{\ra}{\right\rangle}
\newcommand{\pa}{{\mathcal P}(\A)}
\newcommand{\pM}{{\mathcal P}(\vm)}
\newcommand{\vr}{{\mathcal M}}
\newcommand{\pma}{{\mathcal P_A}(\vm)}
\newcommand{\sm}{{\mathcal S}(\vm)}
\newcommand{\C}{\mathbb{C}}
\newcommand{\Z}{\mathbb{Z}}
\newcommand{\T}{\mathbb{T}}
\newcommand{\N}{\mathbb{N}}
\newcommand{\R}{\mathbb{R}}
\newcommand{\RR}{\mathcal{R}}
\newcommand{\cl}{{\rm cl}}
\newcommand{\h}{{\mathcal H}}
\newcommand{\M}{{\mathcal M}}
\newcommand{\lh}{{\mathcal B}({\mathcal H})}
\newcommand{\A}{\mathfrak{A}}
\newcommand{\vm}{\mathfrak{M}}
\newcommand{\vn}{\mathfrak{N}}
\newcommand{\B}{\mathfrak{B}}
\newcommand{\mnc}{{\mathcal M}_n(\C)}
\newcommand{\soc}{{\rm soc}}
\newcommand{\sga}{\mathcal{S}_A(\A)}
\newcommand{\sgm}{\mathcal{S}_A(\vm)}
\newcommand{\tr}{{\rm Tr}}
\newcommand{\uno}{{\bf\rm \textbf{1}}}
\DeclareMathSymbol{\subsetneqq}{\mathbin}{AMSb}{36}
\newtheorem{th1}{{\bfTheorem}}[section]
\newtheorem{thm}[th1]{{\bf Theorem}}
\newtheorem{lem}[th1]{{\bf Lemma}}
\newtheorem{lemma}[th1]{{\bf Lemma}}
\newtheorem{prop}[th1]{{\bf Proposition}}
\newtheorem{cor}[th1]{{\bf Corollary}}
\theoremstyle{remark}
\newtheorem{rem}[th1]{Remark}
\newtheorem{example}[th1]{Example}
\newtheorem{rems}[th1]{Remarks}
\theoremstyle{definition}
\newtheorem{qu}{Question}
\newcommand{\lho}{\mathcal{B}_{A^{1/2}}(\h)}
\begin{document}

\setcounter{page}{1}
\title[On the   $A$- spectrum
  for $A$-bounded operators on von-Neumann algebras]{On the $A$-spectrum
  for $A$-bounded operators on von-Neumann algebras}

\author[H. Baklouti \MakeLowercase{and} K. Dhifaoui \MakeLowercase{and} M.~Mabrouk]
{H. Baklouti$^{1}$,   K. Dhifaoui$^{1}$ \MakeLowercase{and} M.~Mabrouk$^{1}$}

\address{  Faculty of Sciences of Sfax, Department of Mathematics University of Sfax Tunisia
}

\address{Faculty of Sciences of Sfax, Department of Mathematics University of Sfax Tunisia
}
\email{msmabrouk@uqu.edu.sa}

\subjclass[2010]{47A05; 47C15; 47B65; 47A10.}
\keywords{$C^*$-algebra; von Neumann algebra; positive operator; spectrum.}
\begin{abstract}
Let $\vm$ be a von Neumann algebra and let $A$ be a nonzero positive element of $\vm$. By $\sigma_A(T) $ and $r_A(T)$ we denote the $A$-spectrum and
the $A$-spectral radius of $T\in\vm^A$, respectively. In this paper, we show that $\sigma(PTP, P\vm P)\subseteq \sigma_A(T)$. Sufficient conditions for the equality $\sigma_A(T)=\sigma(PTP, P\vm P)$ to be true are presented.  Also, we show that $\sigma_A(T)$ is finite for any $T\in\vm^A$ if and only if $A$ is in the socle of $\vm$. Next , we consider the relationship between elements
$S$ and $T\in\vm^A$ that satisfy one of the following two conditions: (1) $\sigma_A(SX)=\sigma_A(TX)$ for all $X\in\vm^A$, (2)
$r_A(SX)= r_A(TX)$ for all $X\in\vm^A$. Finally, a Gleason-Kahane-\.Zelazko's theorem for the $A$-spectrum is derived.
\end{abstract}\maketitle
\vspace*{-0.75cm}
\section{Introduction}
For a complex Hilbert space $\h$   with inner product $\la ., .\ra$ we denote by $\lh$ the von Neumann algebra of all bounded linear operators on $\h$  with the identity
operator $I = I_\h$. Let also $\vm$ be  a von Neumann algebra acting $\h$. That is $\M$ is $*$-subalgebra of $\lh$ containing the identity operator $I$ and closed in the weak operator topology.  Denote by $\vm^+$ the cone of positive elements in $\vm$ and by $\vm'$
the dual space of $\vm$. The range, respectively the kernel of $T\in\vm$   are denoted by $\RR(T)$ respectively $\NN(T)$.  $\overline{\RR(T)}$ stands for the norm closure
of $\RR(T)$. From this point forward, we fix $A$ for positive element in $\vm$ and $P$
for the orthogonal projection onto $\overline{\RR(A)}$. Note that by properties of von Neumann algebras $P$ belongs also to $\vm$.

A linear functional $f\in\vm'$ is said to be positive, and write $f\ge 0$,  if $f(T)\ge 0$ for all $T\in\vm^+$. Note that  $f$ is positive if and only if $f$ is bounded and $\|f\|=f(I)$; see for instance \cite[Corollary 3.3.4]{murphy1990c}. By $\sgm$ we
denote the set of all positive functional $f$ on $\vm$ such that $f(A)=1$. This is nothing but the set of all  positive linear functionals on $\vm$  of the form $\frac{f}{f(A)}$ where $f\in\mathcal{S}(\vm)$ such that $f(A)\neq0$.   Here $\sm$ denotes the set of positive linear functional $f$ on $\vm$ such that $\|f\|=f(I)=1$.
 For an element $T\in\vm$, set $\|T\|_A:=\sup_{ f\in\sgm}\sqrt{f(T^*AT)}$.  Note that by \cite[Theorem 3.5]{mabrouk2020}, we have
 \begin{equation}\label{nh}
  \|T\|_A=\sup_{\|h\|_A=1}\|Th\|_A,
 \end{equation}  where $\|h\|_A:=\sqrt{\la Ah, h\ra}$ for any $h\in\h$.
  Observe that  $\|T\|_A=0$ if and only if $AT=0$ and $\|.\|_I=\|.\|$.  Further it may happen that $\|T\|_A=\infty$ for some $T\in\vm$.

  From now on we will denote
  \begin{equation}\label{setb}
  \vm^A=\left\{T\in\vm: \|T\|_A<\infty\right\}.
  \end{equation}
 Recall that,
given an element $T$ in $\vm$, we say that $S\in\vm$ is an $A$-adjoint of $T$   provided that $AS = T^*A$.   The   collection   of all operators  in $\vm$ admitting $A^{1/2}$-adjoints is denoted by $\vm^A$. We mention
here that this kind of equations can be studied by using the following
theorem   (called Douglas theorem for von Neumann algebras, see \cite{Nayak}).
\begin{thm} \label{lemDouglas}
Let $\vm$ be a von Neumann algebra and $X, Y \in \vm$.
Then the following conditions are equivalent:
\begin{itemize}
\item[(i)] $YY^*\leq \alpha XX^*$ for some $\alpha\geq0$;
\item[(ii)] there exists $Z\in \vm$ such that $Y=XZ$.
\end{itemize}
Moreover, if $X^*X=Y^*Y$, then $Z$ can be chosen to be a partial isometry with initial
projection the range projection of $Y$, and final projection as the range projection of $X$. \end{thm}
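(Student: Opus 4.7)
The plan is to adapt the classical Douglas factorization lemma to the von Neumann algebra setting; the only new ingredient beyond the standard $\lh$ argument is to ensure that the factor $Z$ lies inside $\vm$, which is where the double commutant theorem enters.

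The implication (ii)$\Rightarrow$(i) is immediate: if $Y=XZ$, then $YY^*=X(ZZ^*)X^*\leq\|Z\|^2 XX^*$, so one may take $\alpha=\|Z\|^2$. For (i)$\Rightarrow$(ii), assuming $YY^*\leq\alpha XX^*$, I would define a linear map $W$ on $\RR(X^*)$ by $W(X^*h):=Y^*h$. The inequality (i) makes this well-defined and bounded with $\|W\|\leq\sqrt{\alpha}$, since $X^*h=0$ forces $\|Y^*h\|^2=\la YY^*h,h\ra\leq\alpha\la XX^*h,h\ra=0$. Extending $W$ continuously to $\overline{\RR(X^*)}$ and by $0$ on $\NN(X)=\overline{\RR(X^*)}^{\perp}$ produces an element of $\lh$ with $WX^*=Y^*$, so $Z:=W^*\in\lh$ satisfies $XZ=Y$.

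The crucial step is to push $Z$ inside $\vm$. By the double commutant theorem it suffices to prove that $W\in\vm''=\vm$, i.e., that $WU=UW$ for every $U\in\vm'$. For such $U$, the relations $UX=XU$, $UX^*=X^*U$, $UY=YU$, $UY^*=Y^*U$ show that $U$ preserves both $\overline{\RR(X^*)}$ and $\NN(X)$; one then computes $WU(X^*h)=W(X^*Uh)=Y^*Uh=UY^*h=UW(X^*h)$ on $\RR(X^*)$, while $WU$ and $UW$ both vanish on $\NN(X)$. Continuity of $W$ and $U$ then gives $WU=UW$ on all of $\h$, whence $W\in\vm$ and $Z=W^*\in\vm$.

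For the moreover part, assuming $X^*X=Y^*Y$, uniqueness of the positive square root gives $|X|=|Y|$, and the polar decompositions $X=V_X|X|$, $Y=V_Y|Y|$ may be taken \emph{inside} $\vm$ by the standard internal-polar-decomposition property of a von Neumann algebra. I would then refine the canonical $Z$ from the previous step to a partial isometry built out of $V_X$ and $V_Y$, verifying the factorization $XZ=Y$ and the partial isometry identities $Z^*Z$, $ZZ^*$ from the polar identity $|X|V_X^*=X^*$, the interchange formula $V_Y|Y|=|Y^*|V_Y$, and the inclusion $\overline{\RR(Y)}\subseteq\overline{\RR(X)}$ extracted from (i). The main obstacle I anticipate is the bookkeeping in this last step: combining the common right support coming from $X^*X=Y^*Y$ with the range containment coming from (i) in order to identify the initial and final projections of $Z$ exactly as named in the statement.
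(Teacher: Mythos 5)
The paper offers no proof of this statement at all: Theorem 1.1 is imported verbatim from the reference on Douglas' lemma for von Neumann algebras (Nayak), so there is nothing internal to compare your argument against. Judged on its own terms, your proof of the equivalence of (i) and (ii) is correct and is the expected one: the classical Douglas construction of the contraction $W$ with $WX^*=Y^*$ (well defined on $\RR(X^*)$ by (i), extended by continuity to $\overline{\RR(X^*)}$ and by $0$ on $\NN(X)$), followed by the double commutant theorem, checking $WU=UW$ for $U\in\vm'$ separately on the dense subspace $\RR(X^*)$ and on $\NN(X)$. All the steps you indicate (well-definedness from $\|Y^*h\|^2\leq\alpha\|X^*h\|^2$, invariance of $\overline{\RR(X^*)}$ and $\NN(X)$ under $\vm'$, density plus continuity) go through.

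The \emph{moreover} clause, however, is not proved, and the sketch you give cannot be completed as written, because under the hypothesis $X^*X=Y^*Y$ alone a right factorization $Y=XZ$ need not exist: take $X=\left(\begin{smallmatrix}1&0\\0&0\end{smallmatrix}\right)$ and $Y=\left(\begin{smallmatrix}0&0\\1&0\end{smallmatrix}\right)$ in $\mathcal{M}_2(\mathbb{C})$; then $X^*X=Y^*Y$ but $\RR(Y)\nsubseteq\RR(X)$, so no $Z$ with $Y=XZ$ exists, and in particular no partial isometry does. What the internal polar decompositions $X=V_X|X|$, $Y=V_Y|Y|$ actually yield, using $|X|=|Y|$ and $V_X^*V_X=V_Y^*V_Y$ (the projection onto $\overline{\RR(X^*)}=\overline{\RR(Y^*)}$), is the \emph{left} factorization $Y=V_Y|X|=(V_YV_X^*)X$, where $W=V_YV_X^*$ is a partial isometry with $W^*W=V_XV_X^*$ the range projection of $X$ and $WW^*=V_YV_Y^*$ the range projection of $Y$. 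That left-multiplication form is the correct content of the clause; to obtain a partial isometry in a right factorization $Y=XZ$ one must instead assume $YY^*=XX^*$ and run the same construction on $X^*$ and $Y^*$. So the remaining work is not mere bookkeeping: you must first reconcile on which side the factorization lives, and only then do the initial and final projections come out as named.
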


The set defined in \eqref{setb} has been studied extensively in the last decade. See for instance \cite{amz,  mabrouk2020, mz2023} and the references cited within those works.  Note that  $\vm^A$ and $\vm_A$ are two  subalgebras of $\vm$ such that $\vm_A\subseteq\vm^A$ and     $\|.\|_A$ induces a seminorm on $\vm^A$.   Further, if $\vm=\lh$, we have $\vm^A=\lho$.  Here \begin{align*}
\lho= \Big\{T\in\lho: \,
\exists c > 0; {\|Th\|}_{A}\leq c{\|h\|}_{A}, \forall h\in\mathcal{H}\Big\},
\end{align*} is the class of operators introduced and studied in \cite{arias2008metric} and \cite{arias2009lifting}.

The notions of spectrum and   spectral radius  have been extended for elements of  $\lho$ in \cite{BS}.
The study was further continued for operators in $\vm^A$ in \cite{ mz2023}   as follows. An operator
$T\in\vm^A$ is said to be left (resp. right) $A$-invertible in   $\vm^A$ if there exists $S\in\vm^A$ such that $AST=A$ (resp. $ATS=A$. The operator
$T\in\vm^A$ is said to be $A$-invertible in $\vm^A$ if it has an $A$-inverse in
$\vm^A$. That is there exists $S\in\vm^A$ such that $ATS = AST = A$ (or
equivalently $PTS = PST = P$).  The following characterization of the $A$-invertibility is quoted from   \cite{mz2023}.
 \begin{thm}
 (\cite[Theorem 2.14]{mz2023})
 \label{th1}
  An element $T\in\vm^A$ is $A$-invertible in $\vm^A$ if and only if the
following two conditions are satisfied:
\begin{itemize}
\item[(i)] there exists $c>0$ such that $\frac{1}{c}f(A)\leq f(T^*AT) \leq cf(A)$ for any $f\in\sm$,
\\
(or equivalently: $\mathrm{(i)}$' there exists $c>0$ such that $\frac{1}{c}\|h\|_A\leq \|Th\|_A \leq c\|h\|_A$ for any $h\in\h$,)
\item[(ii)] there exists $\alpha>0$ such that $A^2\leq \alpha ATT^*A$.
\end{itemize}
\end{thm}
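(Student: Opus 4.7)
My plan is to treat the two implications separately, using Douglas' theorem (Theorem~\ref{lemDouglas}) in one direction and a Cauchy--Schwarz trick for the semi-inner product $\la u,v\ra_A := \la Au,v\ra$ in the other.

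\textbf{($\Rightarrow$)} Suppose $T$ is $A$-invertible with $A$-inverse $S\in\vm^A$, so that $ATS=AST=A$. For (ii), take adjoints of $ATS=A$ to obtain $S^*T^*A=A$, and compute
\[
A^2 \;=\; AA^* \;=\; (ATS)(S^*T^*A) \;=\; AT(SS^*)T^*A \;\le\; \|S\|^2\,ATT^*A,
\]
which is (ii) with $\alpha=\|S\|^2$. For (i)', I would start from $AST=A$ and write, using the semi-inner product $\la\cdot,\cdot\ra_A$,
\[
\|h\|_A^2 \;=\; \la ASTh,h\ra \;=\; \la STh,h\ra_A \;\le\; \|STh\|_A\,\|h\|_A,
\]
by Cauchy--Schwarz; this yields $\|h\|_A\le\|STh\|_A\le\|S\|_A\|Th\|_A$, while $\|Th\|_A\le\|T\|_A\|h\|_A$ is immediate from \eqref{nh}. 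Hence (i)' holds with $c=\max(\|T\|_A,\|S\|_A)$, and passing to (i) uses only the standard fact $X\le Y\iff f(X)\le f(Y)$ for every $f\in\sm$.

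\textbf{($\Leftarrow$)} Assume (i) and (ii). Rewriting (ii) as $AA^*\le\alpha\,(AT)(AT)^*$ and invoking Theorem~\ref{lemDouglas} produces some $S\in\vm$ with $A=ATS$. The plan is to show that this very $S$ is a two-sided $A$-inverse that moreover lies in $\vm^A$. From $ATS=A$ we obtain $A(TSh-h)=0$, and hence $\|TSh-h\|_A=0$ for every $h\in\h$. Substituting $Th$ for $h$ yields $\|T(STh-h)\|_A=0$, and the lower bound $\|Tx\|_A\ge\frac{1}{\sqrt{c}}\|x\|_A$ coming from (i)' (applied with $x=STh-h$) forces $\|STh-h\|_A=0$, i.e., $AST=A$. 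Finally, applying the same lower bound to $x=Sh$ together with the equality $\|TSh\|_A=\|h\|_A$ (which follows from $TSh-h\in\NN(A^{1/2})$) gives $\|Sh\|_A\le\sqrt{c}\,\|h\|_A$, so $\|S\|_A<\infty$ and $S\in\vm^A$ as required.

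The main obstacle I expect is precisely this last step in the reverse direction: Douglas' theorem by itself only delivers a right $A$-inverse $S\in\vm$, and it must be upgraded to a two-sided $A$-inverse lying in $\vm^A$. The key trick is to plug $Th$ into the identity $\|TSh-h\|_A=0$ and cancel a factor of $T$ using the $A$-lower bound from condition~(i); the $A$-boundedness of $S$ then falls out automatically from the same lower bound.
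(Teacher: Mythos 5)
Your proof is correct. Note that the paper itself offers no proof of this statement --- it is imported verbatim from \cite[Theorem 2.14]{mz2023} --- so there is no internal argument to compare against; judged on its own, your route (Douglas' theorem applied to $AA^*\le\alpha\,(AT)(AT)^*$ to manufacture a right $A$-inverse $S$ with $ATS=A$, then the Cauchy--Schwarz inequality for $\la u,v\ra_A$ and the lower bound from (i) to upgrade $S$ to a two-sided inverse and to get $\|S\|_A\le\sqrt{c}<\infty$, hence $S\in\vm^A$ by Lemma~\ref{adjoint}) is sound, and the two degenerate cases ($\|h\|_A=0$ in the Cauchy--Schwarz step, and $\|Th\|_A=0$ in the submultiplicativity step) are harmless since $\vm^A$-elements preserve $\NN(A)$.
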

The $A$-spectrum of $T$, denoted as $\sigma_A(T)$, is
defined as
\[\sigma_A(T) := \{\lambda\in\C: \lambda I-T \ \text{is not}\ A-\text{invertible in}\ \vm^A\}.\]
We often write $\lambda$ in place of $ I\lambda$, where $\lambda\in\C$.  Similarly we define the left $A$-spectrum $\sigma_A^l(T)$ and the right $A$-spectrum $\sigma_A^r(T)$ of $T$. Note that by \cite[Theorem 2.9]{mz2023} we have $\sigma_A(T)=\sigma_A^l(T)\cup\sigma_A^r(T)$.
 The $A$-spectral radius is defined by
\[r_A(T) := \sup\{|\lambda|: \lambda\in\sigma_A(T)\}.\]
When $A$ is well-supported, in \cite{mz2023} and  among other results  it was proved
 the following valuable characterizations of the $A$-spectrum
\begin{equation}\label{sp}
  \begin{split}
    \sigma_A(X)=\Big\{g(AX): g\in\sgm \ \text{and}\  g(AXY)=g(AX)g(AY), \forall Y\in\vr^A &
    \\
     \text{or} \  g(AYX)=g(AX)g(AY), \forall Y\in\vr^A\Big\}. \end{split}\end{equation}
Further,  we have
    \begin{equation}\label{sp1}
  \begin{split}
  \sigma_A(T)=\Big\{g(AT): g\in\sgm\ \text{so that}\   g(T^*AT)=|g(AT)|^2 \text{or}&\\
    g(ATT^*A)=g(AT)g(AT^*A)=|g(AT)|^2g(A^2) \ \Big\}.
  \end{split}\end{equation}
 and    $r_A(T)=\displaystyle\lim_{n\to\infty}\|T^n\|_A^{1/n}$ for every $T\in\vm^A$; see  \cite[Theorems 3.10 \& 3.14]{mz2023}.

  An important fact that can be deduced
from \eqref{sp1} is the so called $A$-Spectral Permanence theorem. That is, if $\vn$ be a von Neumann subalgebra of a $\vm$ such that $I, A\in\vn$, then elements of $\vn$ have the same $A$-spectra in $\vn$ as in $\vm$. Or, to say it differently, $T$ in $\vn$ is $A$-invertible in $\vn$ if and only if $T$ is invertible in $\vm$.

  Finally, we mention that for $A = I$, the notions of $A$-spectrum $\sigma_A(T)$ and $A$-spectral radius $r_A(T)$  become identical with spectrum $\sigma(T)$ and spectral radius $r(T)$  respectively. In particular, \eqref{sp1} writes
\begin{equation}\label{spr}
  \sigma(T)=\Big\{g(T): g\in\sm\ \text{so that}\   g(T^*T)=|g(T)|^2 \ \text{or}\
    g(TT^*)= |g(T)|^2) \ \Big\}.
  \end{equation}

Given a  unital algebra $\A$,  a linear functional $\phi: \A\longrightarrow\C$ is said to be multiplicative if
$\phi(xy) = \phi(x)\phi(y)$ for all $x, y \in\A$. A continuous multiplicative linear functional is called a character.
 The existence of multiplicative linear functionals
is generally not guaranteed, but they are always present when $\A$ is a commutative
 Banach algebra. A classical result of Gleason-Kahane-\.Zelazko  (or GKZ-theorem for short) identifies
multiplicative functionals amongst the members of $\A'$ when $\A$ is a Banach algebra. It  states that if $\phi: \A\longrightarrow\C$ is  linear  and $\phi(x)\in\sigma(x)$ for any $x\in\A$, then $\phi$ is a character. For other alternative proofs of  this result, see \cite{surveyLatif, JR, 10.2307/1998572}. Motivated by this, given a Banach algebra $\A$   and a subalgebra $\B$ of $\A$ with the same unit, the following question arises:

\begin{qu}\label{q2}
Which subset $\Delta(x)$ of $\C$ ensures that a linear function $\phi: \A\longrightarrow\C$ is multiplicative  on $\B$ if $\phi(x)\in\Delta(x)$ for all $x\in\B$.
\end{qu}  Evidently, if $\B$ is a closed subalgebra of a \cs-algebra $\A$, then by the spectral permanence theorem (see \cite[Theorem. 2.1.11]{murphy1990c}) and the GKZ-theorem , it suffices to take $\Delta(x)= \sigma(x)$ for any $x\in\B$.

Let us without further ado, give the outline of the paper. In Section \ref{wellsupported}, after setting up notations and results from the literature, we prove that $\sigma_A(T)$ is a non empty compact subset of $\C$. Next,  if $A$ is well supported  we  give an expression of the $A$-spectrum in terms of pure states. Further, we prove that $\sigma_A(T)=\sigma(PTP, P\vm P)$ for any $T\in\vm^A$. Here $P$   denotes the orthogonal projection onto the range of $A$. An example is provided to show that this equality fails to be true if $A$ is not well supported. Neverthless, it holds when  $T\in\{A\}'$ or if $T$ is algebraic. Here, $\{A\}'$ is the commutant of $A$ in $\vm$. This allows us to show that if $\vm$ and $\vn$ are two von Neumann algebra such that $A\in\vn\subseteq\vm$, then $\sigma_A(T, \vm)=\sigma_A(T, \vn)$ if $T\in\{A\}'$ or if $T$ is algebraic.
In Section \ref{GKZ}  we will apply the results of section \ref{wellsupported} to get some $A$-spectral characterizations. We prove that $\sigma_A(T)$ is finite for any $T\in\vm^A$ if and only if $A$ is in the socle of $\vm$. Note that when $A=I$ we recapture Kaplansky's finite spectrum lemma (\cite[Lemma 7]{kaplansky_1954}).  Also, we show that two  elements $S$ and $T$ satisfying $\sigma_A(TX)=\sigma_A(SX)$ for any $X\in\vm^A$ verify $AS=AT$. Particularly when $\vm$ is prime, the condition $r_A(TX)=r_A(SX)$ for any $X\in\vm^A$ is studied.

Concerning Question  \ref{q2}, we show that if $\phi: \vm^A\longrightarrow\C$ is linear and satisfies $\phi(T)\in\sigma_A(T)$, for any $T\in\vm^A$, then  $\phi$ can be extended to an element of $\sgm$ so that   $\phi(ST)=\phi(S)\phi(T)$ for any $S, T\in\vm^A$, with  $\phi(A)\neq 0$ and $\phi(I)=1$. In particular, if $\vm^A=\vm$, then $\phi$ is a character of $\vm$.

\section{Further results  on the $A$-spectrum}\label{wellsupported}
In the remainder, we shall denote by $\vm$ a von Neumann algebra acting on a Hilbert space $\h$ and let $A\in\vm$ be a nonzero  positive operator.

If   $\mathcal{S}$ is a subset  of $\lh$, $\mathcal{S}'$ denotes the commutant of $\mathcal{S}$ in $\lh$. Namely,   the collection of all elements in $\lh$ that commute with all elements of $\mathcal{S}$. The double commutant $\mathcal{S}''$ of $\mathcal{S}$ is $\left(\mathcal{S}'\right)'$.
It is also known by the von Neumann double commutant theorem (see \cite[Theorem 4.1.5]{murphy1990c}) that $\vm$ is a von Neumann algebra if and only if $\vm=\vm''$.

 Following \cite[Definition II.3.2.8]{blackadar2006operator}, $A$ is called well-supported if
$\sigma(A)\backslash\{0\}$ is closed (or equivalently $\sigma(A)\subseteq\{0\}\cup[\alpha, \infty)$ for some $\alpha> 0$). In that case the sets $A\vm$ and $\vm A$ are closed in $\vm$ (see e.g. \cite[Proposition II.3.2.11]{blackadar2006operator}). In particular $\RR(A)$ is closed, by \cite{Kulkarni2000ACO}.

For an element $T\in\vm$, $\sigma(T, \vm)$ will denote the spectrum of $T$. When no confusion arises, we shall denote it simply by $\sigma(T)$.

 The following facts about the Moore-Penrose inverse of $A$ will be used later on in the paper: There exists a unique closed densely defined
operator $A^\dagger$ (called the Moore-Penrose inverse of $A$) with domain $D(A^\dagger) = \RR(A)\oplus \RR(A)^\perp=\RR(A)\oplus\NN(A)$ and satisfying the following properties
\[ AA^\dagger h=Ph, \forall h\in D(A^\dagger)\ \text{and}\  A^\dagger A h=Ph, \forall h\in\h.\]
It is well-known that $A^\dag$ exists  in $\lh$ if and only if $\RR(A)$ is closed. For properties and applications of the Moore–Penrose
inverse see \cite{caradus1978generalized, Mbekhta1992ConormeEI}  and the references therein.

A positive linear functional $f$ on $\sm$ is said to be pure if for every positive
functional $g$ on $\vm$ satisfying $g(T)\le f(T)$ for all $T\in\vm^+$, there is a scalar $0\le \alpha\le 1$ such that $g =\alpha f$. The set of pure states on $\vm$ is denoted by $\pM$. See \cite{murphy1990c} for properties about pure states. Similarly, we shall say that a linear functional $\tau\in\sga$ is $A$-pure if $\tau$ has the property that whenever $p$ is a positive linear functional on $\vm$ such that $p\le \tau$ then there is a scalar $\alpha \in [0, 1]$ such that $p =\alpha \tau$. It is easy to see that    \[\pma=\left\{\frac{f}{f(A)}: f\in\pM, f(A)\neq 0\right\}.\]

Observe that by Lemma \ref{adjoint}, we have $\vm A^{1/2}\subseteq \vm^A$ and $A\vm^A\subseteq A^{1/2}\vm A^{1/2}$.  For an element $T\in\vm^A$,  we define the set $\mathcal{P}_A(T):=\mathcal{P}_A^l(T)\cup \mathcal{P}_A^r(T)$ where
\begin{subequations}\label{psm}
\begin{equation}
\mathcal{P}_A^r(T):=\Big\{\tau\in\pma: \tau(ATS)=\tau(AT)\tau(AS), \forall S\in A^{1/2}\vm A^{1/2}\Big\}   \end{equation}
\begin{equation}
   \mathcal{P}_A^l(T):=\Big\{\tau\in\pma: \tau(AST)=\tau(AT)\tau(AS), \forall S\in A^{1/2}\vm A^{1/2}\Big\}.
   \end{equation}\end{subequations}
Observe that
\begin{equation}\label{psmI}
  \begin{split} \mathcal{P}_I(T)=\Big\{\tau\in\pma: \tau(TS)=\tau(T)\tau(S), \forall S\in  \vm  \
   \text{or}\    \tau(ST)=\tau(T)\tau(S), \forall S\in\vm\Big\}.\end{split}\end{equation}
We shall use repeatedly the following   lemma which is quoted from \cite[Theorem 2.3]{mz2023}.
\begin{lem}
  \label{adjoint} Let $\vm$ be a von Neumann algebra and $A\in\vm$ be positive. Then
     \[\vm^A=\lho\cap\vm=\left\{X\in \vm: X \ \text{has an} \ A^{1/2}-\text{adjoint}\right\}.\]
    \end{lem}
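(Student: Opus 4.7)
The plan is to verify the three characterizations of $\vm^A$ in sequence, chaining the definitional reformulation with Douglas's factorization theorem (Theorem~\ref{lemDouglas}).

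First, I would establish the equality $\vm^A=\lho\cap\vm$ (definition (\ref{setb}) equals the second set in the statement). By the identity $\|T\|_A=\sup_{\|h\|_A=1}\|Th\|_A$ recorded in (\ref{nh}), the quantity $\|T\|_A$ is precisely the operator seminorm of $T$ with respect to $\|\cdot\|_A$ on $\h$. Hence $\|T\|_A<\infty$ is equivalent to the existence of a constant $c>0$ such that $\|Th\|_A\le c\|h\|_A$ for all $h\in\h$, which is exactly membership in $\lho$. Intersecting with $\vm$ gives the first equality.

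Second, I would show that the condition ``$T\in\vm$ has an $A^{1/2}$-adjoint'' is equivalent to the operator inequality $T^*AT\le c^2 A$ for some $c>0$, which in turn is equivalent to the $\|\cdot\|_A$-boundedness condition on $\h$. Unpacking the definition $\|Th\|_A^2=\langle ATh,Th\rangle=\langle T^*ATh,h\rangle$ and $\|h\|_A^2=\langle Ah,h\rangle$, the inequality $\|Th\|_A\le c\|h\|_A$ for all $h\in\h$ is equivalent to $T^*AT\le c^2 A$ by polarization (and the fact that both sides are positive forms). Now I would apply Douglas's theorem (Theorem~\ref{lemDouglas}) in $\vm$ with
\[
X=A^{1/2},\qquad Y=T^*A^{1/2},
\]
so that $XX^*=A$ and $YY^*=T^*AT$. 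Note that $A^{1/2}\in\vm$ by the continuous functional calculus in the von Neumann algebra $\vm$, which is the key structural point ensuring Douglas can be applied inside $\vm$ rather than merely in $\lh$. The theorem then yields $T^*AT\le c^2 A$ if and only if there exists $S\in\vm$ with $T^*A^{1/2}=A^{1/2}S$, i.e.\ $A^{1/2}S=T^*A^{1/2}$, which is exactly the defining equation for $S$ to be an $A^{1/2}$-adjoint of $T$.

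Combining the two steps gives the triple equality, with $S$ produced in $\vm$ precisely because Douglas's theorem is available for von Neumann algebras. The main obstacle, such as it is, is the verification that Douglas's statement can be invoked inside $\vm$ (requiring $A^{1/2}\in\vm$), and the careful bookkeeping of adjoints to match the convention $A^{1/2}S=T^*A^{1/2}$; the rest is a routine translation between seminorm bounds, quadratic form inequalities, and operator factorizations.
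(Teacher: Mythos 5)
Your argument is correct, but note that the paper does not actually prove this lemma: it is quoted verbatim from \cite[Theorem 2.3]{mz2023}, so there is no internal proof to compare against. Your proposal supplies a complete, self-contained argument using exactly the tool the paper has already imported for this purpose, namely the Douglas factorization theorem for von Neumann algebras (Theorem \ref{lemDouglas}): the chain $\|T\|_A<\infty \Leftrightarrow T^*AT\le c^2A \Leftrightarrow T^*A^{1/2}=A^{1/2}Z$ for some $Z\in\vm$ is the standard route, and your choice $X=A^{1/2}$, $Y=T^*A^{1/2}$ makes the bookkeeping come out right in both directions (the converse $Y=XZ\Rightarrow YY^*\le\|Z\|^2XX^*$ should be stated explicitly, but it is immediate). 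The only point you pass over a little quickly is the first equivalence: $\sup_{\|h\|_A=1}\|Th\|_A<\infty$ gives the bound $\|Th\|_A\le c\|h\|_A$ only for $h$ with $\|h\|_A\neq 0$, and to extend it to all of $\h$ one must check that $T$ maps $\NN(A^{1/2})$ into itself. This does follow from finiteness of the supremum (if $\|h_0\|_A=0$ but $\|Th_0\|_A>0$, then $\|T(th_0+g)\|_A\to\infty$ as $t\to\infty$ while $\|th_0+g\|_A=\|g\|_A=1$), but the sentence "is equivalent to" hides this one-line verification; with that added, the proof is complete and consistent with the paper's surrounding remarks on $\lho$ and range inclusion.
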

It is worth to note that, by Douglas factorization lemma for operators on $\lh$; (see \cite{Douglas1966}), we have    $T\in\lho$ if and only if $T^*(\RR(A^{1/2})\subseteq\RR(A^{1/2})$. In particular  $T(\mathcal{N}(A))\subseteq \mathcal{N}(A)$ whenever $T\in\lho$.

Also, for our future use we state the following elementary lemma.
\begin{lemma}
  \label{KM} Consider a von Neumann algebra $\vm$  and $A\in\vm$ be positive. We have the following.
  \begin{enumerate}
    \item If $A$ is well supported then $\vm_A=\vm^A$.
    \item Suppose $S\in\vm$ and $f\in\sm$ such that $|f(S)|=\|S\|$. Then $f(ST)=f(TS)=f(S)f(T)$  for any $T\in\lh$.
    \item For any $T\in\vm_A$, we have $\sigma_A^l(T)=\overline{\sigma_A^r(L)}$ where $L\in\vm$ is such that $AT=L^*A$ (resp. $A^{1/2}T=L^*A^{1/2}$ ) . Here $\overline{\sigma_A^r(L)}:=\{\overline{\lambda}: \lambda\in \sigma_A^r(L)\}$ where $\bar{\lambda}$ denotes the conjugate of the complex number $\lambda$.
        \item If $L$ is an $A$-adjoint (resp. $A^{1/2}$-adjoint) of $T$,  then $\sigma_A(T)=\overline{\sigma_A(L)}$.
  \end{enumerate}
 \end{lemma}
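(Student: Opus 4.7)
The proof splits along the four parts of the statement.

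\emph{For (1)}, the inclusion $\vm_A\subseteq\vm^A$ is a standing fact recorded earlier in the paper. For the converse under well-supportedness, the plan is to combine Lemma~\ref{adjoint} with Douglas' Theorem~\ref{lemDouglas} and the observation that $\sigma(A)\subseteq\{0\}\cup[\alpha,\infty)$ yields both $A^\dagger\in\vm$ (via continuous functional calculus on $\sigma(A)$) and $PA^{1/2}=A^{1/2}$. Given $T\in\vm^A$, Douglas supplies $S\in\vm$ with $T^*A^{1/2}=A^{1/2}S$, so $T^*A=A^{1/2}SA^{1/2}$. Setting $S':=A^\dagger A^{1/2}SA^{1/2}\in\vm$, a direct computation gives $AS'=PA^{1/2}SA^{1/2}=A^{1/2}SA^{1/2}=T^*A$, exhibiting $S'$ as an $A$-adjoint of $T$.

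\emph{Part (2)} is the classical Cauchy--Schwarz argument for states attaining their norm. Setting $\lambda:=f(S)$, expansion gives $f((S-\lambda I)^*(S-\lambda I))=f(S^*S)-|\lambda|^2\leq\|S\|^2-|\lambda|^2=0$, which forces this quantity to vanish by positivity of $f$. The Cauchy--Schwarz inequality for $f$ then yields $|f(T(S-\lambda I))|^2\leq f(TT^*)\cdot 0=0$, whence $f(TS)=\lambda f(T)=f(S)f(T)$. The symmetric computation with $(S-\lambda I)(S-\lambda I)^*$ and $f(SS^*)\leq\|S\|^2$ delivers $f(ST)=f(S)f(T)$.

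\emph{For (3)}, the plan is to show that $T-\lambda I$ is left $A$-invertible in $\vm^A$ if and only if $L-\bar\lambda I$ is right $A$-invertible in $\vm^A$, then pass to the complements of these spectra. The intertwining $AT=L^*A$ produces $(T-\lambda I)^*A=A(L-\bar\lambda I)$ for every $\lambda\in\C$. Starting from $AS(T-\lambda I)=A$ with $S\in\vm^A$, taking adjoints gives $(T-\lambda I)^*S^*A=A$; substituting the intertwining reduces the task to producing $R\in\vm^A$ satisfying $S^*A=AR$, whereupon $A(L-\bar\lambda I)R=A$. The main obstacle is precisely this rewriting: since $\vm^A$ is not $*$-closed in general, $S\in\vm^A$ does not automatically yield $S^*A\in A\vm$. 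The route I have in mind is to exploit the hypothesis $T\in\vm_A$ to refine the choice of left $A$-inverse within $\vm_A$: using Douglas-type factorization on the equation $AS(T-\lambda I)=A$ together with the $A$-adjoint structure of $T-\lambda I$, one replaces $S$ by an operator $S_0\in\vm_A$ still satisfying $AS_0(T-\lambda I)=A$, for which $S_0^*A\in A\vm$ is automatic. The reverse implication is entirely symmetric by exchanging the roles of $T$ and $L$.

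\emph{Part (4)} follows by applying (3) symmetrically. Since $T$ is the $A$-adjoint (respectively $A^{1/2}$-adjoint) of $L$, (3) applied with $T$ and $L$ swapped gives $\sigma_A^l(L)=\overline{\sigma_A^r(T)}$, equivalently $\sigma_A^r(T)=\overline{\sigma_A^l(L)}$. Combined with (3) itself and the decomposition $\sigma_A(X)=\sigma_A^l(X)\cup\sigma_A^r(X)$ recalled in the introduction, this produces $\sigma_A(T)=\sigma_A^l(T)\cup\sigma_A^r(T)=\overline{\sigma_A^r(L)}\cup\overline{\sigma_A^l(L)}=\overline{\sigma_A(L)}$.
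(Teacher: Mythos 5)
Parts (1), (2) and (4) of your proposal are sound. For (1) and (2) you actually supply direct arguments (the construction $S'=A^\dagger A^{1/2}SA^{1/2}$, and the norm-attainment/Cauchy--Schwarz computation) where the paper merely cites \cite[Corollary 2.4]{mz2023} and \cite[Lemma 2.1]{KM2023}; and your derivation of (4) from (3), via the symmetry $AL=T^*A\iff AT=L^*A$ together with $\sigma_A=\sigma_A^l\cup\sigma_A^r$, is exactly the paper's ``follows immediately from statement (3)''.

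The difficulty is part (3), where you have correctly located the crux but not crossed it. Your whole argument reduces to producing, from a left $A$-inverse $S\in\vm^A$ of $T-\lambda I$, an operator $R\in\vm^A$ with $S^*A=AR$ (equivalently, a left $A$-inverse lying in $\vm_A$). You name this ``the main obstacle'', then describe ``the route I have in mind'' and assert that ``one replaces $S$ by an operator $S_0\in\vm_A$'' --- but no such $S_0$ is constructed and no existence argument is given. This is not a routine verification: since $\vm^A$ is not $*$-closed, membership $S\in\vm^A$ only yields $S^*A^{1/2}=A^{1/2}W$, hence $S^*A=A^{1/2}WA^{1/2}$, which is not of the form $AR$ without additional input (for instance $A^\dagger\in\vm$, which would require $A$ to be well supported --- a hypothesis not available in this part of the lemma). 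So the step on which the entire equivalence ``left $A$-invertibility of $T-\lambda I$ $\iff$ right $A$-invertibility of $L-\bar\lambda I$'' rests is missing. For comparison, the paper's own proof disposes of the point by taking the left inverse $S$ in $\vm_A$ from the outset and writing $AS=W^*A$ with $W\in\vm_A$; one may question whether that choice is fully justified under the stated definition of left $A$-invertibility (which only requires inverses in $\vm^A$), but the computation is at least carried to completion there, whereas your write-up stops at the announcement of a plan.
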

\begin{proof} For statements (1) and  (2), see   \cite[Corollary 2.4]{mz2023} and    \cite[Lemma 2.1]{KM2023}.

   Next  and for statement (3), we will give the proof only when $AT=L^*A$ . The other case is similar. Let $\lambda\notin\sigma_A^l(T) $. Then $AS(T-\lambda I)=A=(T-\lambda I)^*S^*A$ for some $S\in\vm_A$. Write $AS=W^*A$ for some $W\in\vm_A$, we get $A(L-\overline{\lambda} I) W=A$. That is $\overline{\lambda}\notin\sigma_A^r(L)$. Whence $\overline{\sigma_A^r(L)}\subseteq\sigma_A^l(T)$. Similarly we can show that $\sigma_A^l(T)\subseteq  \overline{\sigma_A^r(L)}$. Finally, statement (4) follows immediately from statement (3).
\end{proof}
  For a densely defined and bounded operator  $X$, we shall use the  notation $\overline{X}$ for its unique continuous extension to all of $\h$.

Our first theorem of this section follows as:

\begin{thm}
  \label{nempty} Let $\vm$ be a von Neumann algebra and
$P$ be the orthogonal projection onto the range of $A$. The following statements hold.
\begin{enumerate}
  \item For any $T\in \vm^A$, we have $\sigma(PTP, P\vm P)^r\subseteq \sigma_A^r(T)$ and $\sigma(PTP, P\vm P)^l\subseteq \sigma_A^l(T)$.
  \item Let $T\in\vm_A$ and $L\in\vm^A$ such that $A^{1/2}T=L^*A^{1/2}$, then $\|T\|_A=\|PLP\|=\|PL\|$ and $\|L\|_A=\|PTP\|=\|PT\|$.
  \item Let $T\in\vm_A$ and $L\in\vm^A$ such that $A^{1/2}T=L^*A^{1/2}$ and let $\lambda\in\C$ such that $\max\left(\|T\|_A, \|L\|_A\right)<|\lambda|$, then $\lambda I\pm T$ and $\overline{\lambda} I\pm T$ are $A$-invertible. 
\end{enumerate}
\end{thm}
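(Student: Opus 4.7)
The plan is to handle the three parts in sequence, as part (3) combines what parts (1) and (2) have set up. The common background is the block decomposition $\h = P\h \oplus (I-P)\h$ together with the fact (noted after Lemma \ref{adjoint}) that every $X \in \vm^A$ satisfies $X(\NN(A)) \subseteq \NN(A)$, equivalently $PX = PXP$, plus the injectivity of $A_0 := A|_{P\h}$ (since $\NN(A) \cap P\h = \{0\}$) and the identities $A = AP = PA$.

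For part (1), I would unfold right $A$-invertibility. If $\lambda \notin \sigma_A^r(T)$, pick $S \in \vm^A$ with $A(\lambda I - T)S = A$. The identities $PT = PTP$, $PS = PSP$, $A = PA$ turn the equation into $A_0\bigl[(\lambda P - PTP)(PSP)\bigr] = A_0$ on $P\h$. Injectivity of $A_0$ then forces $(\lambda P - PTP)(PSP) = P$ inside $P\vm P$, so $\lambda \notin \sigma(PTP, P\vm P)^r$. The left-sided statement is obtained by the symmetric calculation.

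For part (2), I would compute $\|Th\|_A^2 = \|A^{1/2}Th\|^2 = \|L^* A^{1/2}h\|^2$ using $A^{1/2}T = L^*A^{1/2}$. Setting $k = A^{1/2}h$ and maximizing over unit $k \in \RR(A^{1/2})$, then closing up by density of $\RR(A^{1/2})$ in $\overline{\RR(A)} = P\h$ and continuity of $L^*$, yields $\|T\|_A = \|L^*P\| = \|PL\|$. The equality $\|PL\| = \|PLP\|$ is immediate from $L \in \vm^A$ (which gives $PL(I-P) = 0$). The companion identity $\|L\|_A = \|PT\| = \|PTP\|$ follows by repeating the argument after taking adjoints in $A^{1/2}T = L^*A^{1/2}$ to get $A^{1/2}L = T^*A^{1/2}$.

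For part (3), assume $|\lambda| > \max(\|T\|_A, \|L\|_A)$ and focus on $\lambda I - T$; the remaining three cases reduce to this one since $|\pm\lambda| = |\pm\overline{\lambda}| = |\lambda|$ and replacing $T$ by $-T$ swaps $L$ with $-L$ without changing norms. By part (2), $\|PTP\| = \|L\|_A < |\lambda|$, so inside the unital von Neumann algebra $P\vm P$ (with identity $P$) the element $\lambda P - PTP$ is invertible, with inverse the norm-convergent Neumann series $B = \sum_{n \geq 0}(PTP)^n/\lambda^{n+1} \in P\vm P$. The delicate step is to verify $B \in \vm^A$: the identity $\|Xh\|_A = \|PXh\|_A$ (valid for any $X \in \vm^A$, since $A(I-P) = 0$), combined with $PT = PTP$, gives $\|PTP\|_A = \|T\|_A$; submultiplicativity of $\|\cdot\|_A$ then yields $\|B\|_A \leq 1/(|\lambda| - \|T\|_A) < \infty$. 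Reversing the block computation of part (1) with $S := B$ produces $A(\lambda I - T)B = AB(\lambda I - T) = A$, establishing $A$-invertibility. The main obstacle, and the reason the hypothesis demands both inequalities, is precisely this membership $B \in \vm^A$: $|\lambda| > \|L\|_A$ alone only yields an inverse of $\lambda P - PTP$ inside $P\vm P \subseteq \vm$, whereas the sharper $|\lambda| > \|T\|_A$ is exactly what forces this inverse to have finite $A$-seminorm.
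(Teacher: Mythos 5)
Your argument is correct, and for parts (1) and (2) it runs essentially parallel to the paper's: the paper cancels $A$ by multiplying $A(\lambda I-T)S=A$ through by $A^\dagger$ (using $A^\dagger A=P$) where you invoke injectivity of $A$ on $P\h$ together with the fact that both sides have range in $P\h$ — the same cancellation in different clothing; and for (2) the paper cites the identity $\|T\|_A=\|A^{1/2}T(A^{1/2})^\dagger\|$ and then extends $L^*P|_{D((A^{1/2})^\dagger)}$ by continuity, which is exactly your density-of-$\RR(A^{1/2})$ computation.

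The one genuine divergence is in part (3), at the step you correctly single out as delicate: showing that the Neumann-series inverse lies in $\vm^A$. The paper builds \emph{two} series, $(P-PTP)^{-1}=\sum PT^nP$ and $(P-PLP)^{-1}=\sum PL^nP$, verifies the intertwining relation $A^{1/2}(P-PTP)^{-1}=\bigl((P-PLP)^{-1}\bigr)^*A^{1/2}$ term by term, and concludes membership in $\vm^A$ from the $A^{1/2}$-adjoint characterization of Lemma \ref{adjoint}. You build only one series and certify membership through the seminorm definition \eqref{setb}, via $\|PTP\|_A=\|T\|_A$ and submultiplicativity. Both hypotheses $|\lambda|>\|L\|_A$ and $|\lambda|>\|T\|_A$ get used in each approach, just distributed differently (for you: norm convergence versus $A$-seminorm bound; for the paper: convergence of the $T$-series versus convergence of the $L$-series). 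Your route is a little more economical, and it has the side benefit of making transparent why both norms appear in the hypothesis. The only point you should spell out is the passage from the bound $\|B_N\|_A\le\sum_{n\le N}\|T\|_A^n/|\lambda|^{n+1}$ on the partial sums to the bound on the norm limit $B$: this follows since, for each fixed $h$, $\|A^{1/2}B_Nh\|\to\|A^{1/2}Bh\|$ when $B_N\to B$ in operator norm, so $\|Bh\|_A\le(|\lambda|-\|T\|_A)^{-1}\|h\|_A$. With that one line added, your proof is complete.
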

\begin{proof}
\noindent
(1)
  Note that $AP=A$ and $\RR(1-P)=\NN(A)$. As $X(\NN(A))$ is a subset of $\NN(A)$ for each $X\in\vm^A$, then $\RR(X(1-P))\subseteq\NN(A)$. Whence $PX(1-P)=0$. Accordingly  $PXP=PX$ and then $PX\in P\vm P$ for each $X\in\vm^A$. Also, recall that $P\vm P$ is a strongly closed  hereditary \cs-subalgebra of $\vm$ with unit $P$ and $\sigma(X, \vm)=\sigma(X, P\vm P)\cup\{0\}$ for each $X\in P\vm P$; see \cite[Theorem 1.6.15]{rickart1960general}.
\\
Let us prove first that $\sigma(PTP, P\vm P)^r\subseteq \sigma_A^r(T)$ and $\sigma(PTP, P\vm P)^l\subseteq \sigma_A^l(T)$. To do so, it suffices to prove that $PXP$ is left (resp. right) invertible in $P\vm P$ whenever $X$ is left (resp. right) $A$-invertible in $\vm^A$.  Pick up an left (resp. right)  $A$-invertible element $X\in\vm^A$.   Then there exists $Y\in \vm^A$ so that $AYX=A$ (resp. $AXY=A$).
Multiplying both sides of this equation by $A^\dagger$, we get $PYX=P$ (resp. $PXY=P$). But since $P^2=P$, $PXP=PX$ and $PYP=PY$ we infer that $PYP(PXP)=P$ (resp. $(PXP) PYP=P$). That is $PXP=PX$ is left (resp. right) invertible in $P\vm P$.
\\ \vskip 0.0001 cm\noindent
(2)  Let $T\in\vm^A$ and note that  there exists  $L\in\vm^A$ so that $A^{1/2} T=L^*A^{1/2}$, by Lemma \ref{adjoint}.
 Observe by \cite[Theorem 3.5]{mabrouk2020} and \cite[Proposition 2.2]{arias2008metric} we have
    \begin{eqnarray*}
      \|T\|_A =\|A^{1/2} T(A^{1/2})^\dag\|&=& \|L^*A^{1/2}(A^{1/2})^\dag\| \\
       &=& \|L^*  P{\vert}_{D\left((A^{1/2})^\dag\right)}\|
       \\ &=& \left\|\overline{L^*  P{\vert}_{D\left((A^{1/2})^\dag\right)}}\right\|.
    \end{eqnarray*}
  The uniqueness of the extension of $ L^*  P{\vert}_{D\left((A^{1/2})^\dag\right)} $ yields that $\overline{L^*  P{\vert}_{D\left((A^{1/2})^\dag\right)}}=L^*P$. This together with the fact that $PL=PLP$ imply that
  \[\|T\|_A=\|L^*P\|=\|PL\|=\|PLP\|.\]
   Finally, since $L\in\vm_A$, $A^{1/2} L=T^*A^{1/2}$  similarly we can show that  $\|L\|_A=\|PTP\|=\|PT\|$.
  \\ \vskip 0.02 cm
(3)  Let $\lambda\in\C$ such that $\|T\|_A<|\lambda|$ and $\|L\|_A<|\lambda|$.  We may replace  $T$ and $L$ by $ \lambda^{-1}T$ and $(\overline{\lambda})^{-1} L$ respectively and assume   $\lambda=1$.  By Theorem \ref{nempty}-(2), we have $\|PLP\|<1$. Then by using  \cite[Theorem 1.2.2]{murphy1990c} on the Banach algebra $P\vm P$ we see that $ P-PLP$ is invertible in $P\vm P$ with
  \begin{equation}\label{inverse}
  \left( P-PLP\right)^{-1}=\sum_{n\ge 0} (PLP)^n=\sum_{n\ge 0} PL^nP. \end{equation}
On the other hand, since $L\in\vm_A$, $A^{1/2} L=T^*A^{1/2}$ and  $\|L\|_A<1$, a similar reasoning entails also that $ P-PTP$ is invertible in $P\vm P$ with
  \begin{equation}\label{inverset}
  \left( P-PTP\right)^{-1}=\sum_{n\ge 0} (PTP)^n=\sum_{n\ge 0} PT^nP. \end{equation}
  Now straightforward computation shows that
  \[A^{1/2}\left(P-PTP\right)^{-1}=\left(\left(P-PLP\right)^{-1}\right)^*A^{1/2}.\] Whence  $\left(P-PTP\right)^{-1}$ and  $\left(P-PLP\right)^{-1}$ are in $\vm$, by Lemma \ref{adjoint}. Further
  \[A(I-T)(P-PTP)^{-1}=A(P-PTP)(P-PTP)^{-1}=AP=A.\]
  Similarly we have $A(P-PTP)^{-1}(I-T)=A$. Hence $I-T$ is $A$-invertible. Finally, a similar reasoning entails also that $I+T$ is $A$-invertible. This completes the proof.
 \end{proof}
 As an immediate consequence of Theorem \ref{nempty}, we get the following result.
\begin{cor}\label{spectreborne}
 For any $T\in \vm^A$ we have $\sigma_A(T)$ is a non empty compact subset of $\C$. 
\end{cor}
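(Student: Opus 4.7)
The plan is to establish the three required properties separately: non-emptiness, boundedness, and closedness of $\sigma_A(T)$.

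For non-emptiness I would invoke Theorem \ref{nempty}(1). Since $P\in\vm$ is a projection, $P\vm P$ is a unital $C^*$-subalgebra of $\vm$ with unit $P$, and classical Banach-algebra theory then ensures $\sigma(PTP, P\vm P)\neq\emptyset$. Combining the left and right inclusions of Theorem \ref{nempty}(1) with the decomposition $\sigma_A(T)=\sigma_A^l(T)\cup\sigma_A^r(T)$ from \cite[Theorem 2.9]{mz2023} would then yield $\emptyset\neq\sigma(PTP,P\vm P)\subseteq\sigma_A(T)$. For boundedness, Lemma \ref{adjoint} furnishes an $A^{1/2}$-adjoint $L\in\vm^A$ of $T$, and Theorem \ref{nempty}(3) immediately produces the bound $\sigma_A(T)\subseteq\{\lambda\in\C:|\lambda|\leq\max(\|T\|_A,\|L\|_A)\}$.

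The substantive step is to show that $\C\setminus\sigma_A(T)$ is open, and here I would exploit the $A$-invertibility characterization of Theorem \ref{th1}. I would fix $\lambda_0\notin\sigma_A(T)$, so that there exist $c,\alpha>0$ with $\tfrac{1}{c}\|h\|_A\leq\|(T-\lambda_0 I)h\|_A\leq c\|h\|_A$ for every $h\in\h$ together with $A^2\leq\alpha A(T-\lambda_0 I)(T-\lambda_0 I)^*A$. For $\lambda$ close to $\lambda_0$, the two-sided bound propagates to $T-\lambda I$ by the seminorm triangle inequality, since $\|(T-\lambda I)h\|_A$ and $\|(T-\lambda_0 I)h\|_A$ differ by at most $|\lambda-\lambda_0|\,\|h\|_A$; provided $|\lambda-\lambda_0|<1/c$, the lower bound stays strictly positive. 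To propagate the operator inequality I would first unfold it into its Hilbert-space form $\|Ah\|^2\leq\alpha\|(T-\lambda_0 I)^*Ah\|^2$, set $g=Ah$, and estimate $\|(T-\lambda I)^*g\|\geq\|(T-\lambda_0 I)^*g\|-|\lambda-\lambda_0|\,\|g\|\geq(\tfrac{1}{\sqrt{\alpha}}-|\lambda-\lambda_0|)\|g\|$; squaring and re-expressing in operator form gives $A^2\leq\beta A(T-\lambda I)(T-\lambda I)^*A$ for a new $\beta>0$, valid as soon as $|\lambda-\lambda_0|<1/\sqrt{\alpha}$. Theorem \ref{th1} then certifies $A$-invertibility of $T-\lambda I$ for all such $\lambda$, showing that $\lambda_0$ is an interior point of the complement.

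The hardest step will be the perturbation analysis of condition (ii) of Theorem \ref{th1}: it is an operator inequality rather than a scalar estimate, and the seminorm $\|\cdot\|_A$ does not endow $\vm^A$ with a genuine Banach-algebra structure, so a direct Neumann-series argument is not available. The key trick is to unfold the operator inequality into the vector-level statement on $g=Ah\in\RR(A)$, where an ordinary scalar triangle inequality handles the perturbation cleanly and can then be re-assembled into the required operator inequality.
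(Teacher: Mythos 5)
Your proof is correct, and for the non-emptiness and boundedness steps it coincides with the paper's (both follow from Theorem \ref{nempty}). For closedness, however, you take a genuinely different route. The paper fixes an $A$-inverse $S_0$ of $T_0=\lambda_0 I-T$, writes the resolvent-type factorization $P(\lambda I-T)=PT_0\left((\lambda-\lambda_0)S_0+I\right)$, invokes Theorem \ref{nempty}-(3) (a Neumann-series statement run inside the genuine Banach algebra $P\vm P$) to invert the second factor for $|\lambda-\lambda_0|<\min\left(\|S_0\|_A^{-1},\|L_0\|_A^{-1}\right)$, and then uses the multiplicativity of $A$-invertibility. You instead perturb both conditions of the metric characterization in Theorem \ref{th1}: the two-sided bound (i)' propagates by the seminorm triangle inequality, and your unfolding of (ii) into the vector estimate $\|g\|\le\sqrt{\alpha}\,\|(T-\lambda_0)^*g\|$ on $g=Ah\in\RR(A)$, followed by the reverse triangle inequality and re-assembly into $A^2\le\beta A(T-\lambda I)(T-\lambda I)^*A$, is sound. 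Your argument buys self-containedness at the vector level --- only scalar estimates, no need to exhibit an $A$-inverse of the perturbed element or to pass through $P\vm P$ --- at the cost of leaning on the full characterization of Theorem \ref{th1} rather than on the elementary Neumann-series machinery the paper has already set up in Theorem \ref{nempty}-(3); the paper's route, in exchange, yields an explicit formula for the $A$-inverse near $\lambda_0$. The only point worth stating explicitly in your write-up is that $T-\lambda I\in\vm^A$ (immediate, since $\vm^A$ is a unital subalgebra), which Theorem \ref{th1} requires before its two conditions can certify $A$-invertibility.
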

\begin{proof}
   By \cite[Theorem 1.2.5]{murphy1990c}, we have  $\sigma(PTP, P\vm P)$ is non empty. Hence so is   $\sigma_A(T)$, by Theorem \ref{nempty}-(1). Now, let $T, L\in\vm^A$ such that $A^{1/2}T=L^*A^{1/2}$. Set $\alpha=\max(\|T\|_A, \|L\|_A)$ and note that by Theorem \ref{nempty}-(2),  we have  $|\lambda|\le \alpha$ for any $\lambda\in\sigma_A(T)$. So, it remains to show that  $\sigma_A(T)$ is closed.  Suppose that  $\lambda_0\notin\sigma_A(T)$ then $T_0=\lambda_0I -T$ has an $A$-inverse in $\vm^A$. Pick up an $A$-inverse $S_0$ of $T_0$ and note that we have $P=PS_0T_0=PT_0S_0$. We may write
    \begin{eqnarray*}
      P\left(\lambda I-T\right) &=& \left(\lambda-\lambda_0\right)P +PT_0 \\
        &=& \left(\lambda-\lambda_0\right)PT_0S_0  +PT_0 \\
        &=& PT_0\left(\left(\lambda-\lambda_0\right)S_0  +I\right).
    \end{eqnarray*}
    Let $\beta<\min\left(\frac{1}{\|S_0\|_A}, \frac{1}{\|L_0\|_A}\right)$, where
    $L_0$ is an $A^{1/2}$-adjoint of $S_0 $. If $|\lambda-\lambda_0|<\beta$, then  we can invert the operator $\left(\lambda-\lambda_0\right)S_0  +I$, by Theorem \ref{nempty}-(3). Keeping in mind that $P$ is $A$-invertible, then $P\left(\lambda I-T\right)$ is $A$-invertible, by \cite[
Theorem 2.]{mz2023}. In particular $\left(\lambda I-T\right)$ is $A$-invertible. Hence the open disc in the complex plane of center $\lambda_0$ and radius $\beta$ is contained in $\C\backslash\sigma_A(T)$. Thus $\sigma_A(T)$ is closed and the proof is complete.
\end{proof}
\begin{rem} For $r>0$, let $\overline{D}(0, r)$ be the closed unit disc centered at $0$ with radius $r$.

  By the proof of Corollary \ref{spectreborne}, we have
  \begin{equation}\label{inclusionspectre}
    \sigma_A(T)\subseteq \overline{D}(0,  \alpha\}\ \text{where}\ \alpha=\max(\|T\|_A, \|L\|_A).
  \end{equation}
Also, if $A$ is well-supported, by \cite[Lemma 3.3]{mz2023}, we know that $\sigma_A(T)\subseteq \overline{D}(0, \|T\|_A)$.

If $A$ is not well supported, the next example shows that in general $\|T\|_A\neq \|L\|_A$ and that the inclusion  $\sigma_A(T)\subseteq \overline{D}(0, \|T\|_A)$ may  fails for some $T\in\vm^A$. \end{rem}
\begin{example}
  Let $a_n=\frac{1}{2^n}$   for any $n\ge 0$ and consider the Hilbert space $\h = \ell^2(\N)$ and  the multiplication operator $A$ defined
on $\h$ by $A(x_n) = (a_n^2 x_n)$. Consider   also the right (resp. left) weighted
shift operator $T$ (resp. $L$) given by
\[T(x_n) = \frac{2}{5}\ \left(0, x_0, x_1,\cdots\right)\ (\text{resp.}\ L(x_n) = \frac{1}{5}\left( x_1,  x_2, \cdots\right))\] for all
$(x_n)_{n\ge 0}\in\h$. One can check
easily that $T^*A^{1/2}=A^{1/2}L$. In particular $T, L $ are in $\lho$. Further, since $A$ is injective, then $P=I$ and
\[\|T\|_A=\|L\|= \frac{1}{5}\ \text{and}\ \|L\|_A=\|T\| =\frac2{5},\] by Theorem \ref{nempty}-(2). Whence $\|T\|_A\neq\|L\|_A$. Also, in view of Lemma \ref{KM}-(4) and Theorem \ref{nempty}-(1) we see that
\[r_A(L)=r_A(T)\ge r(L)=\|L\|=\frac{2}{5}.\] Therefore $\sigma_A(T)\nsubseteq\overline{D}(0, \|T\|_A)$.
\end{example}
The next theorem,
which may be of independent interest, gives an expression of the $A$-spectrum in terms of pure states.   It gives also a refinement of \cite[Theorem 3.6]{mz2023}.

Recall that an approximate unit for a \cs-algebra $\A$ is an increasing net $(U_\alpha)_{\alpha\in\Lambda}$
of positive elements in the closed unit ball of $\A$ such that $X =\displaystyle\lim_\alpha XU_\alpha=\displaystyle\lim_\alpha U_\alpha X=X$  for
all $X\in\A$. Observe that, since $A$ is well supported, all the obtained  results remain true in the frame  of \cs-algebras. For the sake of readability we shall restrict our selves on von-Neumann algebras.
\begin{thm}
  \label{refine}
    Let $\vm$ be a von Neumann algebra. If $A$ is well-supported, then
     \begin{equation}\label{spp}
     \sigma_A(T)= \Big\{g(AT): g\in\mathcal{P}_A(T) \ \Big\},
  \end{equation}
for any $X\in\vm^A$.  In particular if  $A=I$, then
       \begin{equation}\label{spid}
        \sigma(T)=
       \Big\{g(T): g\in\mathcal{P}_I(T) \ \Big\},
       \end{equation} for any $T\in\vm$.
\end{thm}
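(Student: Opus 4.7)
The strategy is to reduce Theorem \ref{refine} to the classical pure-state characterization of the spectrum in the unital $C^*$-algebra $P\vm P$. This reduction is possible thanks to two consequences of $A$ being well-supported. First, $A^{1/2}$ is invertible in the hereditary $C^*$-subalgebra $P\vm P$ (since $\sigma(A^{1/2},\vm)\setminus\{0\}$ is bounded away from $0$), giving the identification $A^{1/2}\vm A^{1/2} = P\vm P$ via the representation $Y = A^{1/2}[(A^{1/2})^\dag Y (A^{1/2})^\dag]A^{1/2}$ for $Y \in P\vm P$. Second, under the well-supportedness hypothesis one has $\sigma_A(T) = \sigma(PTP, P\vm P)$ (either as a preceding result in this section, or as a consequence of Theorem \ref{nempty}(1) combined with the inclusion to be proved), which transports the spectrum computation entirely into $P\vm P$.

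For the nontrivial inclusion, fix $\lambda \in \sigma_A(T) = \sigma(PTP,P\vm P)^l \cup \sigma(PTP,P\vm P)^r$. The classical pure-state characterization produces a pure state $\phi$ of $P\vm P$ with $\phi(PTP) = \lambda$ satisfying either $\phi((PTP)^*(PTP)) = |\lambda|^2$ (so $\pi_\phi(PTP)\xi_\phi = \lambda\xi_\phi$ in the irreducible GNS representation, yielding $\phi(Y \cdot PTP) = \lambda\phi(Y)$ for all $Y \in P\vm P$) or $\phi((PTP)(PTP)^*) = |\lambda|^2$ (yielding $\phi(PTP \cdot Y) = \lambda\phi(Y)$). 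Extend $\phi$ to $\vm$ by $\tilde\phi(X) := \phi(PXP)$: a Cauchy--Schwarz argument (any positive $f \leq \tilde\phi$ satisfies $f(I-P) = 0$ and hence $f(X) = f(PXP)$) combined with purity of $\phi$ on $P\vm P$ shows $\tilde\phi$ is a pure state of $\vm$; moreover $\tilde\phi(A) = \phi(A) > 0$ because $A$ is invertible in $P\vm P$, so $g := \tilde\phi/\phi(A) \in \pma$.

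The algebraic identities $PA = AP = A$, $PX = PXP$ for $X \in \vm^A$, $S(I-P) = 0$ for $S = PSP \in P\vm P$, and $AT(I-P) = 0$ (because $T(\NN(A)) \subseteq \NN(A)$), yield $P(AT)P = A \cdot PTP$ and $P(AST)P = AS \cdot PTP$. Combined with the first alternative, these give $g(AT) = \phi(A \cdot PTP)/\phi(A) = \lambda$ and $g(AST) = \phi(AS \cdot PTP)/\phi(A) = \lambda\phi(AS)/\phi(A) = g(AT)g(AS)$ for every $S \in A^{1/2}\vm A^{1/2}$, placing $g$ in $\mathcal{P}_A^l(T)$. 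The second alternative is handled through the $A^{1/2}$-adjoint $L$ of $T$: by Lemma \ref{KM}(4) and the similarity $A^{1/2}(PTP) = (PLP)^* A^{1/2}$ inside $P\vm P$, the conjugate $\bar\lambda$ lies in $\sigma(PLP, P\vm P)^l$, and the preceding argument applied to $L$, transported back via the $A^{1/2}$-adjoint correspondence, produces the desired $g \in \mathcal{P}_A^r(T)$ with $g(AT) = \lambda$.

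The forward inclusion $\{g(AT): g \in \mathcal{P}_A(T)\} \subseteq \sigma_A(T)$ is straightforward: for $g \in \mathcal{P}_A^r(T)$, the multiplicativity $g(ATS) = g(AT)g(AS)$ for $S \in A^{1/2}\vm A^{1/2} = P\vm P$ extends to all $Y \in \vm^A$ via $ATY = AT \cdot PY$ and $g(APY) = g(AY)$, so formula \eqref{sp} yields $g(AT) \in \sigma_A(T)$ (symmetric for $\mathcal{P}_A^l$). The main obstacle throughout is the asymmetric placement of the positive element $A$ in the product $A \cdot PTP$: the left/right dichotomy in the pure-state characterization of $\sigma(PTP, P\vm P)$ translates directly only into left-sided multiplicativity $\mathcal{P}_A^l(T)$, and the right-sided case genuinely requires the $A^{1/2}$-adjoint detour through $L$ to realign $A$ on the correct side of the product before the GNS eigenvector identity can be used.
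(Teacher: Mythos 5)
Your strategy is genuinely different from the paper's: you transport everything to the corner $P\vm P$ via $\sigma_A(T)=\sigma(PTP,P\vm P)$ (Theorem \ref{inclu}, whose proof does not depend on the present theorem, so no circularity) and then invoke the classical pure-state description \eqref{spr} in $P\vm P$ together with unique pure-state extension from the hereditary subalgebra, whereas the paper starts from the state characterization \eqref{sp}, derives $\psi(A(T-\lambda)S)=0$ on $\cl\left(A^{1/2}\vm A^{1/2}\right)$, upgrades $\psi$ to a pure state by an annihilator argument, and finishes with an approximate identity. Your easy inclusion and your first (left-sided) alternative are correct. The gap is in the second alternative. What "the preceding argument applied to $L$" at $\bar\lambda$ actually produces is a functional $h\in\mathcal{P}_A^l(L)$ satisfying $h(ASL)=h(AL)h(AS)$ with $h(AL)=\bar\lambda$. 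The only mechanism for "transporting back" is complex conjugation, $\overline{h(X)}=h(X^*)$, and this turns the identity into $h(L^*S^*A)=h(L^*A)h(S^*A)$: the element $A$ now sits on the \emph{right} of every product. Since $A$ commutes with none of $T$, $L$, $S$, the set $\{L^*S^*A\}$ is not $\{ATS'\}$; indeed $(AL)^*=L^*A=A^{1/2}TA^{1/2}\neq AT$ in general, so even the basic requirement $g(AT)=\overline{h(AL)}=\lambda$ fails for this $h$. In the GNS picture the point is the same: the second alternative gives $\pi(PTP)^*\xi=\bar\lambda\xi$, but the quantity you must evaluate is
\begin{equation*}
\phi\big(A\,(PTP)\,S\big)=\la \pi(PTP)\pi(S)\xi,\ \pi(A)\xi\ra,
\end{equation*}
and the eigenvector relation cannot be inserted because $\pi(A)\xi$ is not a multiple of $\xi$. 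So the realignment problem you correctly diagnose for $T$ reappears, unresolved, after the detour through $L$.

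The gap is repairable inside your framework, and without $L$. If $\lambda\in\sigma^r(PTP,P\vm P)$ then, $A$ being invertible in $P\vm P$, the element $Z:=A(PTP-\lambda P)$ is not right invertible in $P\vm P$, so $ZZ^*$ is not invertible there and some pure state $\phi_0$ of $P\vm P$ satisfies $\phi_0(ZZ^*)=0$; by Cauchy--Schwarz $\phi_0(ZS)=0$ for every $S\in P\vm P$, that is $\phi_0(A(PTP)S)=\lambda\phi_0(AS)$. Since $A\ge\alpha P$ in $P\vm P$ forces $\phi_0(A)\ge\alpha>0$, the functional $g(X)=\phi_0(PXP)/\phi_0(A)$ (pure on $\vm$ by your extension argument) satisfies $g(ATS)=g(AT)g(AS)$ for all $S\in A^{1/2}\vm A^{1/2}$ and $g(AT)=\lambda$, i.e.\ $g\in\mathcal{P}_A^r(T)$. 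With that substitution your proof closes.
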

\begin{proof} First note that $AA^\dag=A^\dag A=P$,  since $A$ is well supported. This together with \cite[Theorem 3.5.1]{blackadar2006operator} entails that
\[P\vm P=A\vm A=A^{1/2}\vm A^{1/2}.\]
Now, set $\Theta=\Big\{g(AT): g\in\mathcal{P}_A(T) \ \Big\}$ and let $\lambda=g(AT)$ for some $g\in\mathcal{P}_A(T)$. If $\lambda\notin\sigma_A(T)$, then  there exists $S\in\vm^A$ so that $A(T-\lambda)S =A=AS(T-\lambda)$. If  $g(ATS)=g(AT)g(AS), \forall S\in A^{1/2}\vm A^{1/2}$. Keeping in mind that $PXP=PX$   for each $X\in\vm^A$, yields that
\begin{eqnarray*}
  g(A)=g(A(T-\lambda )S) &=& g(A(T-\lambda )PSP) \\
    &=& g(ATPSP)-\lambda g(ASP) \\
    &=& g(AT)g(ASP)-\lambda g(ASP)=0
\end{eqnarray*}
  Whence $g(A)=0$ which is impossible. since $g(A)=1$. Similarly, the condition $g(AST)=g(AT)g(AS)$  for any $ S\in A^{1/2}\vm A^{1/2}$  leads up to a contradiction. Hence   $\Theta\subseteq\sigma_A(T)$.

  Now, let $\lambda\in\sigma_A(T)$. By \eqref{sp}, there exists $f\in\sgm$ so that $\lambda=f(AT)$ and  $f(ATS)=f(AT)f(AS)$ for all $S\in\vm^A$ \text{or}
    $f(AST)=f(AT)f(AS)$ for all $S\in\vm^A$. Suppose that
     \begin{equation}\label{right}
     f(ATS)=f(AT)f(AS)\ \text{ for all}\ S\in\vm^A.
     \end{equation} The second case can be handled in a similar way. From \eqref{right} we have  $f\left(A(T-\lambda)S\right)=0$ for all $S\in\vm^A$.  Write $f=\frac{\psi}{\psi(A)}$ for some $\psi\in\sm$ with $\psi(A)\neq 0$, then
    \begin{equation}\label{psi}
      \psi\left(A(T-\lambda)S\right)=0;
    \end{equation}  for all $S\in\vm^A$.
    By Lemma \ref{adjoint}, $A(T-\lambda)$ is in $A^{1/2}\vm A^{1/2}$ and $S A^{1/2}$ belongs to $\vm^A$ for any $S\in \vm$. The continuity of $\psi$ together with \eqref{psi} imply $\psi\left(A (T-\lambda)S\right)=0$
     for all $S\in \cl\left(A^{1/2}\vm A^{1/2}\right)$. Here $\cl\left(A^{1/2}\vm A^{1/2}\right)$ denotes the norm closure of $A^{1/2}\vm A^{1/2}$.
     Hence by \cite[Theorem 3.5.5]{murphy1990c}, there exists a pure state $g\in\pa$ so that $g\left(A (T-\lambda)S\right)=0$
     for all $S\in \cl\left(A^{1/2}\vm A^{1/2}\right)$. Observe that $g(A)\neq 0$ since otherwise $g$ vanishes on $ \cl\left(A^{1/2}\vm A^{1/2}\right)$. Hence, if we put $\tau=\frac{g}{g(A)}$, then $\tau\in\pma$ and $\tau\left(ATS\right)=\lambda\tau\left(AS\right)$
     for all $S\in \cl\left(A^{1/2}\vm A^{1/2}\right)$. Now, $\cl\left(A^{1/2}\vm A^{1/2}\right)$ is \cs-subalgebra of $\vm$ and by \cite[Theorem 3.1.1]{murphy1990c} it admits an approximate identity, say $(U_\alpha)_{\alpha\in\Lambda}$. In particular $\tau\left(ATU_\alpha\right)=\lambda\tau\left(AU_\alpha\right)$ for any $\alpha\in\Lambda$. Taking the limit we get  $\lambda=\tau(AT)$ and then $\lambda\in\Theta$.
\end{proof}
\begin{rem}
  It is clear from Theorem \ref{refine} that
  \begin{equation*}
    \sigma_A^r(T)=\Big\{g(AT): g\in\mathcal{P}_A^r(T) \Big\} \ \text{and}\  \sigma_A^l(T)=\Big\{ g(AT): g\in\mathcal{P}_A^l(T)\Big\}.\end{equation*}
\end{rem}

\begin{cor}\label{cor1} Let  $\vm$ be von Neumann algebra and $A\in\vm$ is well-supported. The following hold.
\begin{enumerate}
  \item If $\vm$ is commutative,  then $\vm^A=\vm$ and
     \begin{equation}\label{spcom}
    \sigma_A(T)=\Big\{\varphi(T): \varphi\ \text{is a character and } \ \varphi(A)\neq 0 \Big\}\subseteq \sigma(T).   \end{equation}
  \item $A$ is $A$-invertible and $\sigma_A(A)=\sigma(A)\backslash\{0\}$.
\end{enumerate}
  \end{cor}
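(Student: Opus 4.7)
The plan is to derive both claims from Theorem \ref{refine} together with standard Moore--Penrose calculus, which is available because $A$ is well-supported (so $A^\dag\in\vm$, obtained by continuous functional calculus on $\sigma(A)\subseteq\{0\}\cup[\alpha,\infty)$).

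For (1), I first show that $\vm^A=\vm$: when $\vm$ is commutative every $T\in\vm$ commutes with $A^{1/2}$, so $A^{1/2}T = TA^{1/2}=(T^*)^*A^{1/2}$, giving $T^*$ as an $A^{1/2}$-adjoint of $T$, and Lemma \ref{adjoint} places $T$ in $\vm^A$. Next I invoke the classical fact that pure states of a commutative unital $C^*$-algebra are exactly its characters, so
\[
\pma=\{\varphi/\varphi(A):\varphi\text{ character of }\vm,\ \varphi(A)\neq 0\}.
\]
For any such $\tau=\varphi/\varphi(A)$, the multiplicativity of $\varphi$ immediately yields $\tau(ATS) = \tau(AST) = \tau(AT)\tau(AS)$ for every $S$, so $\tau\in\mathcal{P}_A(T)$ for every $T\in\vm^A$; thus $\mathcal{P}_A(T)=\pma$. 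Applying Theorem \ref{refine} and noting $\tau(AT)=\varphi(T)$ produces the displayed formula, and the containment $\sigma_A(T)\subseteq\sigma(T)$ is then immediate from the Gelfand description $\sigma(T)=\{\varphi(T):\varphi\text{ character of }\vm\}$.

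For (2), well-supportedness gives $A^\dag\in\vm$ with $AA^\dag=A^\dag A=P$, and $A^\dag$ commutes with $A^{1/2}$, so $A^\dag\in\vm^A$ by Lemma \ref{adjoint}. Combining with $AP=PA=A$, one computes $A\cdot A\cdot A^\dag = A\cdot A^\dag\cdot A = A$, proving that $A$ is $A$-invertible and hence $0\notin\sigma_A(A)$. If $\lambda\neq 0$ lies outside $\sigma(A)$, the resolvent $(\lambda I-A)^{-1}$ is also in $\vm^A$ (again commuting with $A^{1/2}$) and is a genuine $A$-inverse of $\lambda I-A$, so $\sigma_A(A)\subseteq\sigma(A)\setminus\{0\}$. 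For the reverse inclusion, suppose $\lambda\in\sigma(A)\setminus\{0\}$ is not in $\sigma_A(A)$: picking $S\in\vm^A$ with $A(\lambda I-A)S = AS(\lambda I-A)=A$ and multiplying by $A^\dag$ yields $(\lambda P-A)(PSP)=(PSP)(\lambda P-A)=P$ in $P\vm P$ (using $PX=PXP$ for $X\in\vm^A$ as observed in the proof of Theorem \ref{nempty}); this means $\lambda\notin\sigma(PAP,P\vm P)=\sigma(A,\vm)\setminus\{0\}$, contradicting the assumption.

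I do not anticipate a serious obstacle: the only verifications needed are that $A^\dag$ and the resolvents of $A$ lie in $\vm^A$ (handled via Lemma \ref{adjoint} since everything commutes with $A^{1/2}$) and that products pass cleanly to $P\vm P$, which is already justified in the proof of Theorem \ref{nempty}.
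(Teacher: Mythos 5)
Your proposal is correct. For part (1) you follow essentially the same path as the paper: $\vm^A=\vm$ by commutativity, pure states of a commutative von Neumann algebra are characters, and Theorem \ref{refine} then yields \eqref{spcom}; the only cosmetic difference is that you spell out the identification $\mathcal{P}_A(T)=\pma$ explicitly. For part (2), however, you take a genuinely different route. The paper passes to the commutative von Neumann algebra $\mathcal{W}(A)$ generated by $A$, invokes the $A$-spectral permanence theorem of \cite{mz2023} to get $\sigma_A(A,\vm)=\sigma_A(A,\mathcal{W}(A))$, and then reads off $\sigma(A)\setminus\{0\}$ from part (1); the $A$-invertibility of $A$ falls out as a byproduct, with $A^\dag\in\mathcal{W}(A)^A$ justified via \cite{Mbekhta1992ConormeEI}. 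You instead argue directly: well-supportedness puts $A^\dag$ in $\vm$ by functional calculus, commutation with $A^{1/2}$ puts it in $\vm^A$ via Lemma \ref{adjoint}, and the identities $AAA^\dag=AA^\dag A=A$ give $A$-invertibility by hand; the inclusion $\sigma_A(A)\subseteq\sigma(A)\setminus\{0\}$ comes from resolvents of $A$ lying in $\vm^A$, and the reverse inclusion re-derives the containment $\sigma(PAP,P\vm P)\subseteq\sigma_A(A)$ of Theorem \ref{nempty}-(1) together with Rickart's identity $\sigma(X,\vm)=\sigma(X,P\vm P)\cup\{0\}$. Your version is more elementary and self-contained, avoiding the spectral permanence theorem entirely (and the last step could be shortened further by citing Theorem \ref{nempty}-(1) or Theorem \ref{inclu} rather than repeating the $A^\dag$-multiplication argument); the paper's version is shorter on the page and illustrates how permanence reduces questions about $\sigma_A(A)$ to the commutative case.
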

\begin{proof}First observe that if $\vm$ is commutative then any $T\in\vm$ has an $A^{1/2}$-adjoint. Hence $\vm^A=\vm$.
  The equality \eqref{spcom} follows from Theorem \ref{refine} and the fact that pure states on a commutative von Neumann algebras are characters and that  the spectrum of $T$ coincides with the set of values on $T$ of all the characters of $\vm$.   For the second statement,  consider the von Neumann algebra $\mathcal{W}(A)$ generated by $A$. As $A$ is positive $\mathcal{W}(A)$ is commutative. Hence   the $A$-spectral permanence theorem (see \cite{mz2023}) together with \eqref{spcom} entail  that $\sigma_A(A, \vm)=\sigma_A(A, \mathcal{W}(A))=\sigma(A)\backslash\{0\}$. In particular $A$ is $A$-invertible with $A$-inverse $A^\dag$ which is in $\mathcal{W}(A)^A\subset\vm^A$ by \cite[Corollaire 1.8]{Mbekhta1992ConormeEI}.
\end{proof}

The next theorem gives another information about the $A$-spectrum when $A$ is well-supported.
\begin{thm}
  \label{inclu} Let $\vm$ be a von Neumann algebra and
$P$ be the orthogonal projection onto the range of $A$.   If   $A$ is well-supported then $\sigma_A^l(T)=\sigma^l(PT , P\vm P)$ and $\sigma_A^r(T)=\sigma^r(PT , P\vm P)$ for any $T\in \vm^A$. In particular $\sigma_A(T)=\sigma(PT , P\vm P)$.

\end{thm}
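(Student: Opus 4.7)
The inclusions $\sigma^l(PT,P\vm P)\subseteq\sigma_A^l(T)$ and $\sigma^r(PT,P\vm P)\subseteq\sigma_A^r(T)$ already follow from Theorem \ref{nempty}(1), since $PT=PTP$ whenever $T\in\vm^A$. The concluding equality $\sigma_A(T)=\sigma(PT,P\vm P)$ then comes for free from $\sigma_A(T)=\sigma_A^l(T)\cup\sigma_A^r(T)$ and the analogous decomposition of the ordinary spectrum in the unital $C^*$-algebra $P\vm P$. Hence my task is to prove the reverse containments when $A$ is well-supported.

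I will treat the left case in detail and indicate the right case at the end. Assume $\lambda\notin\sigma^l(PT,P\vm P)$, so there exists $Q\in P\vm P$ with $Q(\lambda P-PT)=P$, where $P$ is the unit of $P\vm P$. My candidate for a left $A$-inverse of $\lambda I-T$ in $\vm^A$ is $Q$ itself. I must verify two things: that $Q\in\vm^A$, and that $AQ(\lambda I-T)=A$.

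For the first, the well-supportedness of $A$ guarantees that $A^\dagger\in\vm$ (e.g.\ by continuous functional calculus on $\sigma(A)\subseteq\{0\}\cup[\alpha,\infty)$, as already used in Corollary \ref{cor1}), together with $AA^\dagger=A^\dagger A=P$. Since $Q=PQP$ gives $Q^*=PQ^*P$, one has $PQ^*=Q^*$, and then $R:=A^\dagger Q^*A\in\vm$ satisfies $AR=PQ^*A=Q^*A$. Thus $R$ is an $A$-adjoint of $Q$, so $Q\in\vm_A=\vm^A$ by Lemma \ref{KM}(1). For the second, multiplying $Q(\lambda P-PT)=P$ on the left by $A$ and using $AP=A$ yields $\lambda AQ-AQPT=A$. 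Since $Q=QP$, one has $AQT=(AQP)T=AQPT$, and therefore
\[
AQ(\lambda I-T)=\lambda AQ-AQT=\lambda AQ-AQPT=A,
\]
showing $\lambda\notin\sigma_A^l(T)$. The right case is entirely parallel: given $Q'\in P\vm P$ with $(\lambda P-PT)Q'=P$, the same lifting argument puts $Q'$ in $\vm^A$, and multiplying on the left by $A$ together with $AP=A$ gives $A(\lambda I-T)Q'=A$.

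The main obstacle is exactly the first bullet above: producing an $A$-adjoint in $\vm$ for the abstract one-sided inverse $Q$ coming from $P\vm P$. This is where the well-supported hypothesis is indispensable, since it is precisely what guarantees that the Moore--Penrose inverse $A^\dagger$ is bounded and belongs to $\vm$, allowing $Q^*A$ to be lifted to an element of $\vm$ via $R=A^\dagger Q^*A$. Once this lifting is secured, the verification that $Q$ actually is a left $A$-inverse of $\lambda I-T$ is an elementary computation based solely on $AP=A$ and $Q=QP$.
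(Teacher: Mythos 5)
Your proof is correct and follows essentially the same route as the paper: the nontrivial direction is handled by lifting the one-sided inverse $Q$ from $P\vm P$ back to $\vm^A$ and checking the identity after multiplying by $A$, using $AP=A$ and $Q=QP=PQ$. The only cosmetic difference is that you certify $Q\in\vm^A$ by exhibiting the explicit $A$-adjoint $A^\dagger Q^*A$ via the bounded Moore--Penrose inverse, whereas the paper invokes the range-inclusion criterion $Q(\RR(A))\subseteq\RR(A)$; both hinge on the same consequence of well-supportedness, namely that $\RR(A)$ is closed and $A^\dagger\in\vm$.
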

\begin{proof}
 Assume that $A$ is well supported and let $X\in\vm^A$ such that $PXP$ is left invertible in $P\vm P$.
 Then There exists $Y\in\vm$ so that $PYPXP=P$. Whence $AYPXP=A(PYP) X$, since $PXP=PX$ Observe that $PYP\in\vm^A$, since $PYP(\RR(A))\subset\RR(A)$. Therefore $X$ is left $A$-invertible. Similarly we can show that $PXP$ is left invertible in $P\vm P$  implies that $X$ is right $A$-invertible. This completes the proof.
\end{proof}
As a direct consequence of Theorems \ref{refine} and \ref{inclu}, we obtain the following
\begin{cor}
  If $\vm=\mnc$, then $\sigma_A(T)=\sigma(PT, A\mnc A)$ and
  \[\sigma_A(T)= \left\{\tr(QPT):  Q\ \text{is a rank one projection}, \ \tr(QP)=1\right\}.\]

\end{cor}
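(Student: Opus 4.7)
The plan is to deduce both equalities directly from the preceding theorems of Section~\ref{wellsupported}. Since $A\in\mnc$ is a finite positive matrix, its spectrum is finite, so $\sigma(A)\setminus\{0\}$ is closed and $A$ is automatically well-supported. For the first identity I would apply Theorem~\ref{inclu} to obtain $\sigma_A(T)=\sigma(PT,P\mnc P)$, and then invoke the equality $P\mnc P=A^{1/2}\mnc A^{1/2}=A\mnc A$ that is established at the very start of the proof of Theorem~\ref{refine} from the fact $AA^{\dag}=A^{\dag}A=P$. This yields $\sigma_A(T)=\sigma(PT,A\mnc A)$ with essentially no further work.

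For the second identity, I would keep the equality $\sigma_A(T)=\sigma(PT,P\mnc P)$ from the first step and exploit the concrete description of pure states on a matrix algebra. The pure states of $\mnc$ are exactly the vector states $\omega_v(X)=\langle Xv,v\rangle=\tr(vv^{*}X)$ indexed by unit vectors $v\in\C^{n}$. Restricting to the hereditary $\cs$-subalgebra $P\mnc P$, which is $*$-isomorphic to $M_{r}(\C)$ with $r=\rank P$, its pure states correspond to those rank-one projections $Q=vv^{*}$ supported inside $\RR(P)$, namely those for which $\tr(QP)=\|Pv\|^{2}=1$. Feeding this parametrization into the spectral characterization~\eqref{spr} applied inside $P\mnc P$ with unit $P$, every $\lambda\in\sigma(PT,P\mnc P)$ can be written as $\tr(QPT)$ for such a rank-one projection $Q$.

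The main technical step is to ensure that only the correct rank-one projections are picked up, namely those for which the multiplicativity condition in~\eqref{spr} is satisfied. By the Cauchy--Schwarz equality case, the requirement $\tr(Q(PT)^{*}PT)=|\tr(QPT)|^{2}$ forces $PTv=\tr(QPT)\,v$, so the scalars $\tr(QPT)$ that arise are precisely the eigenvalues of $PT$ acting on $\RR(P)$, which exhaust $\sigma(PT,P\mnc P)$. Conversely, for any eigenvalue $\lambda$ of $PT|_{\RR(P)}$ with unit eigenvector $v\in\RR(P)$, the projection $Q=vv^{*}$ satisfies $\tr(QP)=1$ and $\tr(QPT)=\lambda$. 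Running the argument in both directions yields the claimed description of $\sigma_A(T)$ in terms of rank-one projections, which will be the most delicate part of the proof since it requires carefully matching the abstract spectral characterization with the concrete matrix picture.
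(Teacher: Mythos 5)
Your route is the same as the paper's: the first identity comes from Theorem~\ref{inclu} (a positive matrix is automatically well-supported) plus $P\mnc P=A\mnc A$, and the second from the fact that pure states of $\mnc$ are $X\mapsto\tr(QX)$ with $Q$ a rank-one projection; the paper's own proof is precisely this one line. Your handling of the second identity is, however, more careful than the paper's, and that care exposes a genuine defect in the statement as printed. Your Cauchy--Schwarz step shows that the values $\tr(QPT)$ arising from vector states that satisfy the multiplicativity condition of \eqref{spr} are exactly the eigenvalues of $PTP$ on $\RR(P)$, i.e.\ $\sigma(PT,P\mnc P)$. But the displayed set in the corollary imposes no such restriction on $Q$: as literally written it equals $\{\langle PTv,v\rangle:\ v\in\RR(P),\ \|v\|=1\}$, the numerical range of $PTP|_{\RR(P)}$, which in general strictly contains the spectrum (take $A=I=P$, $n=2$, $T$ the nilpotent Jordan block: the left-hand side is $\{0\}$ while the right-hand side is the closed disc of radius $1/2$). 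So what you actually prove is the corrected statement in which $Q$ is further required to satisfy the multiplicativity (equivalently, eigenvector) condition, and that hypothesis cannot be dropped; the paper's one-line proof silently elides it. Two small points within your argument: you should also dispose of the second branch of \eqref{spr}, $g(TT^*)=|g(T)|^2$, which forces $v$ to be an eigenvector of $(PTP)^*$ and again produces only eigenvalues of $PTP$; and for the inclusion $\sigma(PT,P\mnc P)\subseteq\{\tr(QPT):\dots\}$ you do not need \eqref{spr} at all, since in finite dimensions the spectrum of $PTP|_{\RR(P)}$ is its set of eigenvalues and each is realized by the projection onto a unit eigenvector.
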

\begin{proof}
  This follows from the fact that pure sates on $\mnc$ are the functionals of the form $T\in\mnc\longmapsto \tr(QT)$ with $Q$ is a rank one projection.
\end{proof}

\begin{cor}
  \label{xy=yx} Assume that $A$ is well-supported and let $T, S\in\vm^A$ be non-zero elements. The following statements hold.
  \begin{enumerate}
    \item $\sigma_A(ST)\backslash\{0\}=\sigma_A(TS)\backslash\{0\}$. In particular $r_A(TS)=r_A(ST)$
    \item If $ATS=AST=0$, then  \[\sigma_A(T+S)\backslash\{0\}=\left(\sigma_A(T)\cup\sigma_A(S)\right)\backslash\{0\}.\]
    \item If $S^2=S$ and $T^2=T$, then
     \[\sigma_A\left((I-T)(I-S)\right)\backslash\{0, 1\}= \sigma_A(TS)\backslash\{0, 1\}.\]
     \end{enumerate}
\end{cor}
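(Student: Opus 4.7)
The plan is to transport all three spectral equalities into the unital Banach algebra $P\vm P$ (with unit $P$) via Theorem \ref{inclu}, which gives $\sigma_A(X)=\sigma(PX,P\vm P)$ for every $X\in\vm^A$ when $A$ is well-supported. A direct $2\times 2$ block matrix computation with respect to $\h=\RR(A)\oplus\NN(A)$ shows that for any $X,Y\in\vm^A$ one has $PX=PXP$ and $P(XY)=(PX)(PY)$, so $X\mapsto PX$ is multiplicative from $\vm^A$ into $P\vm P$. For (1), Jacobson's lemma in $P\vm P$ gives $\sigma((PS)(PT))\setminus\{0\}=\sigma((PT)(PS))\setminus\{0\}$; rewriting the two products as $PST$ and $PTS$ and invoking Theorem \ref{inclu} yields $\sigma_A(ST)\setminus\{0\}=\sigma_A(TS)\setminus\{0\}$, and the spectral radius equality is immediate since removing the single point $0$ never affects a supremum of moduli. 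For (2), well-supportedness of $A$ gives $A^\dag\in\vm$ with $A^\dag A=P$, so $ATS=AST=0$ implies $(PT)(PS)=(PS)(PT)=0$ in $P\vm P$; the identity $(\lambda-x)(\lambda-y)=\lambda(\lambda-(x+y))$, valid whenever $xy=yx=0$, then shows that for $\lambda\neq 0$ one has $\lambda\in\sigma(x+y)$ iff $\lambda\in\sigma(x)\cup\sigma(y)$, which is (2) after translating back via Theorem \ref{inclu}.

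For (3), the hypotheses $T^2=T$ and $S^2=S$ force $(PT)^2=PT$ and $(PS)^2=PS$, and $P(I-T)(I-S)=(P-PT)(P-PS)$ by the multiplicativity above. Thus the statement reduces to the following abstract claim: for idempotents $a,b$ in any unital Banach algebra $\mathcal{B}$ with unit $\uno$,
\[
\sigma(ab)\setminus\{0,1\}=\sigma((\uno-a)(\uno-b))\setminus\{0,1\}.
\]
I would prove this through a building-block lemma: for an idempotent $e\in\mathcal{B}$ and any $k\in e\mathcal{B}$, $\sigma(e-k)\setminus\{0,1\}=(1-\sigma(k))\setminus\{0,1\}$. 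In the $e/(\uno-e)$ block decomposition, $k$ has the upper-row matrix form $\left(\begin{smallmatrix}eke & ek(\uno-e)\\ 0 & 0\end{smallmatrix}\right)$, and a direct corner-algebra check shows that for $\lambda\neq 0$, $\lambda\uno-k$ is invertible in $\mathcal{B}$ iff $\lambda\notin\sigma_{e\mathcal{B} e}(eke)$, while $\lambda\uno-(e-k)$ is invertible iff $1-\lambda\notin\sigma_{e\mathcal{B} e}(eke)$; comparing spectra gives the building block.

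Now apply the building block first with $(e,k)=(\uno-a,(\uno-a)b)$, giving $\sigma((\uno-a)(\uno-b))\setminus\{0,1\}=(1-\sigma((\uno-a)b))\setminus\{0,1\}$, and then with $(e,k)=(b,ba)$, giving $\sigma(b(\uno-a))\setminus\{0,1\}=(1-\sigma(ba))\setminus\{0,1\}$. Jacobson's lemma identifies $\sigma((\uno-a)b)\setminus\{0\}=\sigma(b(\uno-a))\setminus\{0\}$ and $\sigma(ba)\setminus\{0\}=\sigma(ab)\setminus\{0\}$; chaining these equalities yields the abstract statement and completes (3). The main obstacle is part (3): one must formulate the right building-block lemma and chain two applications of it with Jacobson's lemma in order to strip away both $0$ and $1$ from the spectra simultaneously. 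Parts (1) and (2) are essentially direct applications of classical Banach-algebra facts once the translation to $P\vm P$ is in place.
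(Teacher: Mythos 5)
Your proposal is correct, and for parts (1) and (2) it follows essentially the same route as the paper: both reduce everything to the unital algebra $P\vm P$ via the identity $\sigma_A(X)=\sigma(PX,P\vm P)$ and the multiplicativity of $X\mapsto PX$ on $\vm^A$ (the paper invokes Proposition \ref{mr1}, you invoke Theorem \ref{inclu}; these are interchangeable here), and both then apply Jacobson's lemma for (1). For (2) the paper simply cites \cite[Lemma 3.2]{ransford_white_2001}, whereas you reprove that lemma on the spot from the factorization $(\lambda-x)(\lambda-y)=\lambda(\lambda-(x+y))$ valid when $xy=yx=0$; this is a self-contained but equivalent argument. The genuine divergence is part (3): the paper's proof stops after part (2) and gives no argument for (3) at all, while you supply a complete one. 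Your building-block lemma --- for an idempotent $e$ and $k\in e\mathcal{B}$, $\sigma(e-k)\setminus\{0,1\}=(1-\sigma(k))\setminus\{0,1\}$, proved by the upper-triangular Peirce decomposition relative to $e$ --- is correct, and chaining its two instances $(e,k)=(\uno-a,(\uno-a)b)$ and $(e,k)=(b,ba)$ with Jacobson's lemma does yield $\sigma((\uno-a)(\uno-b))\setminus\{0,1\}=\sigma(ab)\setminus\{0,1\}$ for idempotents $a,b$; applied with $a=PT$, $b=PS$ and unit $\uno=P$ in $P\vm P$, this proves exactly the statement (note that $\mu\mapsto 1-\mu$ exchanges the deleted points $0$ and $1$, so deleting $\{0,1\}$ is preserved at each step). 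In short, your write-up not only matches the paper's proof where one exists but fills the gap the paper leaves open in item (3).
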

\begin{proof}
   By Proposition \ref{mr1}, we have
  \begin{eqnarray*}
    \sigma_A(TS)\backslash\{0\} &=& \sigma(PTS)\backslash\{0\} \\
      &=& \sigma\left((PTP)(PSP)\right)\backslash\{0\}, \ (\text{since}\  PTP^2 =PT\ \text{and}\ PSP=PS)
      \\ &=&\sigma\left((PSP)(PTP)\right)\backslash\{0\}, \ (\text{since}\  \sigma(XY)\backslash\{0\}=\sigma(YX)\backslash\{0\}, \forall X, Y\in\vm))
 \\&=& \sigma\left(PST \right)\backslash\{0\}=\sigma_A\left(TS\right)\backslash\{0\}. \end{eqnarray*}
 If $AST=ATS=0$ then $PST=(PT)(PS)=(PS)(PT)=0$. Hence by \cite[Lemma 3.2]{ransford_white_2001}, it yields that
 \begin{eqnarray*}
    \sigma_A(S+T)\backslash\{0\} &=& \sigma\left(PT+PS\right)\backslash\{0\}= \left(\sigma(PT)\cup\sigma(PS)\right)\backslash\{0\}=\left(\sigma_A(T)\cup\sigma_A(S)\right)\backslash\{0\}.
    \end{eqnarray*}
\end{proof}

Another consequence is the following proposition.
\begin{prop}\label{mr1}
Let $\vm$ be a von Neumann algebra and
$P$ be the orthogonal projection onto the range of $A$. If $A$ is well-supported and $T, L\in\vm^A$ such that $AL=T^*A$, then
$\sigma_A(T)\backslash\{0\}=\sigma(PT)\backslash\{0\}$,
\begin{equation}\label{inclu1}
 \sigma^r_A(T)\backslash\{0\}=\Big\{g(T): g\in\mathcal{P}_I^r(T)\ \text{and}\ g(P)=1\Big\}\backslash\{0\}
\end{equation} and
\begin{equation}\label{inclu2}
\ \sigma^l_A(T)\backslash\{0\}=\Big\{\overline{g(L)}: g\in\mathcal{P}_I^l(L)\ \text{and}\ g(P)=1\Big\}\backslash\{0\}.\end{equation} In particular, 
if $AT$ is self adjoint we have $\sigma_A(T)\subseteq\sigma(T)$.
\end{prop}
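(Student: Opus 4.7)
The plan is to establish the four assertions in succession, using Theorem \ref{inclu} to pass to the corner $P\vm P$ and Theorem \ref{refine} (with $A=I$) to bring in pure states.

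\emph{Step 1: the equality $\sigma_A(T)\setminus\{0\}=\sigma(PT)\setminus\{0\}$.} Since $PT=PTP$ lies in the hereditary \cs-subalgebra $P\vm P$, the spectral permanence for hereditary subalgebras invoked in the proof of Theorem \ref{nempty} gives $\sigma(PT,\vm)=\sigma(PT,P\vm P)\cup\{0\}$. Combining this with $\sigma_A(T)=\sigma(PT,P\vm P)$ from Theorem \ref{inclu} and deleting $0$ from both sides produces the identity.

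\emph{Step 2: the pure-state formulas for $\sigma_A^r$ and $\sigma_A^l$.} I would first upgrade the permanence above to a one-sided version: for $X\in P\vm P$ and $\lambda\neq 0$, the element $\lambda P-X$ is right (resp.\ left) invertible in $P\vm P$ if and only if $\lambda I-X$ is right (resp.\ left) invertible in $\vm$. The forward direction patches a $P\vm P$-inverse $Y$ with $\tfrac{1}{\lambda}(I-P)$ using $X(I-P)=0$; the reverse compresses a $\vm$-inverse $Z$ to $PZP$ and uses $PX=X$. Combined with Theorem \ref{inclu} this yields $\sigma_A^r(T)\setminus\{0\}=\sigma^r(PT,\vm)\setminus\{0\}$, which by the remark following Theorem \ref{refine} equals $\{g(PT):g\in\mathcal{P}_I^r(PT)\}\setminus\{0\}$. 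The crux is the bijective correspondence, for $\lambda\neq 0$,
\[\{g\in\mathcal{P}_I^r(PT):g(PT)=\lambda\}\longleftrightarrow\{g\in\mathcal{P}_I^r(T):g(P)=1,\ g(T)=\lambda\}.\]
Going left to right, substituting $S=P$ in $g((PT)S)=g(PT)g(S)$ gives $g(PT)=g(PT)g(P)$, which forces $g(P)=1$ since $g(PT)=\lambda\neq 0$; Lemma \ref{KM}(2) then yields $g(PX)=g(X)$ for every $X\in\vm$, so $g(PT)=g(T)$ and $g(TS)=g((PT)S)=g(PT)g(S)=g(T)g(S)$. The reverse direction is immediate once $g(P)=1$ is in hand. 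The left-spectrum formula follows by applying the right-spectrum result to the $A$-adjoint $L$ of $T$ and invoking Lemma \ref{KM}(3): from $AT=L^*A$ one gets $\sigma_A^l(T)=\overline{\sigma_A^r(L)}$, and conjugating the resulting pure-state description of $\sigma_A^r(L)$ produces the claimed formula.

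\emph{Step 3: the inclusion $\sigma_A(T)\subseteq\sigma(T)$ when $AT$ is self-adjoint.} In this situation $L=T$ satisfies $AL=T^*A$, and for any positive functional $g$,
\[g(AT)=g((AT)^*)=\overline{g(AT)},\]
whence $g(AT)\in\R$ and \eqref{sp1} forces $\sigma_A(T)\subseteq\R$. For a nonzero $\lambda\in\sigma_A(T)$, Step 2 supplies a pure state $g\in\mathcal{P}_I(T)$ with $g(T)=\lambda$ (in the right case) or $g(T)=\overline{\lambda}=\lambda$ (in the left case, by reality of $\lambda$), so $\lambda\in\sigma(T)$ by Theorem \ref{refine}. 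The remaining case $\lambda=0$ is treated by contrapositive: if $T^{-1}\in\vm$ exists, then multiplying $AT=T^*A$ by $T^{-1}$ on the left and on the right gives $AT^{-1}=(T^{-1})^*A$, so $T^{-1}(\mathcal{N}(A))\subseteq\mathcal{N}(A)$; Lemma \ref{adjoint} together with the Douglas-type remark after it (valid since $A$ is well-supported) then places $T^{-1}$ in $\vm^A$, and the identity $A=ATT^{-1}=AT^{-1}T$ shows $T$ to be $A$-invertible, contradicting $0\in\sigma_A(T)$.

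The main obstacle is the pure-state transfer in Step 2: extracting the normalization $g(P)=1$ from the algebraic multiplicativity of $g$ on the compression $PT$ is where Lemma \ref{KM}(2) has to earn its keep. Once that transfer is in place, the rest of the argument is an assembly of hereditary spectral permanence and the pure-state characterizations already furnished by Theorems \ref{refine} and \ref{inclu}.
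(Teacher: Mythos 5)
Your proposal is correct and follows essentially the same route as the paper: Theorem \ref{inclu} together with hereditary spectral permanence for the first equality, the pure-state transfer forced by $g(P)=1$ and Lemma \ref{KM}(2) for \eqref{inclu1}, and Lemma \ref{KM}(3) applied to $L$ for \eqref{inclu2}. You merely make explicit some steps the paper glosses over (the one-sided spectral permanence between $P\vm P$ and $\vm$ away from $0$, and the $\lambda=0$ case of the final inclusion $\sigma_A(T)\subseteq\sigma(T)$), but the underlying argument is the same.
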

\begin{proof}We have   $\sigma(PTP, P\vm P)=\sigma_A(T)$, by Theorem \ref{inclu}. Whence
\[\sigma_A(T)\backslash\{0\}=\sigma(PTP, P\vm P)\backslash\{0\}=\sigma(PT)\backslash\{0\}, \text{by \cite[Theorem 1.6.15]{rickart1960general}.}
\]  Next,  we show that $\sigma^r_A(T)\backslash\{0\}=\Lambda$ where
\[\Lambda=\Big\{g(T): g\in\mathcal{P}_I^T\ \text{and}\ g(P)=1\Big\}\backslash\{0\}.\] To that end,  recall that $\sigma^r_A(T)\backslash\{0\}=\sigma^r(PT)\backslash\{0\}$ by Theorem \ref{inclu}. Pick up an element $\lambda\in \sigma^r(PT)\backslash\{0\}$. Applying equality \eqref{spid} in Theorem \ref{refine}, then  there is a pure state $g$ on $g $  such that   $\lambda=g(PT)$ and $g(PTS)=g(PT)g(S), \forall S\in\ \vm $.    In particular
\[\lambda=g(PT)=g(PTP)=g(PT)g(P).\]  Hence   $ g(P)=1=\|P\|$, since $\lambda\neq 0$.  By Lemma \ref{KM}-(2), we infer that $g(PS)=g(SP)=g(S)g(P)=g(S)$ for any $S\in\vm$. Therefore $g(P)=1$, $\lambda=\vf(PT)=\vf(P)\vf(T)=\vf(T)$ and
\[g(TS)=g(P)g(TS)=g(PTS)=g(PT)g(S)=g(T)g(S)\] for any $S\in\vm$. This implies that $\lambda\in\Lambda$.

 Conversely, pick up any $\lambda\in\Lambda$. Then $\lambda=g(T)$ with $g\in\mathcal{P}_I^T$ and $ g(P)=1$. Again by Lemma \ref{KM}, we have $g(PS)=g(SP)=g(S)g(P)=g(S)$ for any $S\in\vm$. In particular $\lambda=g(PT)$ and straightforward computations show that $g\in\mathcal{P}_I^{PT}$. Whence $\lambda\in\sigma(PT)\backslash\{0\}$. Finally   equality \eqref{inclu2} follows from Lemma \ref{KM}-(3) and \eqref{inclu1}. The proof is thus complete.
\end{proof}
\begin{cor}\label{corn1}
Let $\vm$ be a von Neumann algebra and  $A\in\vm$ is well-supported, then $r_A(T)\le r(T)$ for any $T\in\vm^A$.
\end{cor}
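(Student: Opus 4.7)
The plan is to reduce $r_A(T)$ to the ordinary spectral radius of $PT$ via Proposition \ref{mr1}, and then exploit the fact that every $T\in\vm^A$ leaves $\NN(A)$ invariant in order to compare $r(PT)$ with $r(T)$.

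First I would invoke Proposition \ref{mr1}, which under the well-supported hypothesis gives the identity $\sigma_A(T)\setminus\{0\}=\sigma(PT)\setminus\{0\}$. Taking the supremum of moduli on both sides (noting that if $\sigma_A(T)\subseteq\{0\}$ then $r_A(T)=0$ and the inequality is immediate) yields
\[r_A(T)\le r(PT).\]

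Next, to bound $r(PT)$ by $r(T)$, I would use the description of $\vm^A$ supplied by Lemma \ref{adjoint} and the remark immediately following it: every $T\in\vm^A$ satisfies $T(\NN(A))\subseteq\NN(A)$, which is equivalent to $PT(I-P)=0$, i.e.\ $PT=PTP$. From this identity I would show by induction that $(PT)^n=PT^n$ for every $n\ge 1$; the inductive step reads
\[(PT)^n=PT\cdot (PT)^{n-1}=PT\cdot PT^{n-1}=PT\cdot T^{n-1}-PT(I-P)T^{n-1}=PT^n,\]
since the middle correction term vanishes. Consequently $\|(PT)^n\|\le \|P\|\,\|T^n\|=\|T^n\|$ for every $n$.

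Finally, applying Gelfand's spectral radius formula gives
\[r(PT)=\lim_{n\to\infty}\|(PT)^n\|^{1/n}\le \lim_{n\to\infty}\|T^n\|^{1/n}=r(T),\]
which combined with the first step yields the claim. No step appears to pose a serious obstacle: the argument is essentially a bookkeeping assembly of Proposition \ref{mr1}, the invariance $T(\NN(A))\subseteq\NN(A)$, and Gelfand's formula, with the only mild subtlety being the need to discard $0$ from both spectra before applying Proposition \ref{mr1}.
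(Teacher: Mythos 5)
Your proof is correct, but it closes the argument by a genuinely different route than the paper. The paper's proof is essentially one line: it quotes the fact that $r_A(T)=\max_{\lambda\in\sigma_A^r(T)}|\lambda|$ from \cite[Corollary 3.18]{mz2023} and then reads off from the pure-state description \eqref{inclu1} in Proposition \ref{mr1} that $\sigma_A^r(T)\setminus\{0\}\subseteq\{g(T):g\in\mathcal{P}_I^r(T)\}\subseteq\sigma(T)$; the detour through the \emph{right} $A$-spectrum is what makes that work, since the corresponding description \eqref{inclu2} of the left $A$-spectrum is phrased in terms of the adjoint $L$ rather than $T$ itself. You instead use the other conclusion of Proposition \ref{mr1}, namely $\sigma_A(T)\setminus\{0\}=\sigma(PT)\setminus\{0\}$, and then prove $r(PT)\le r(T)$ directly: the identity $PT=PTP$ (from $T(\NN(A))\subseteq\NN(A)$, as noted after Lemma \ref{adjoint}), the induction $(PT)^n=PT^n$, the bound $\|(PT)^n\|\le\|T^n\|$, and Gelfand's formula. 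This is sound, and the norm estimate is doing real work: block-triangularity of $T$ with respect to $\overline{\RR(A)}\oplus\NN(A)$ does not by itself obviously yield the spectral inclusion $\sigma(PT)\subseteq\sigma(T)\cup\{0\}$ in infinite dimensions, so passing through $\lim_n\|T^n\|^{1/n}$ is the right move. What your version buys is independence from the external result on where $r_A$ is attained (and from the pure-state machinery), at the cost of a slightly longer, more computational argument; the paper's version is shorter but leans on the cited corollary. Your handling of the degenerate case $\sigma_A(T)\subseteq\{0\}$ before discarding $0$ is the one genuine subtlety, and you addressed it.
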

\begin{proof}
  The result follows from the fact that  $r_A(T)=\displaystyle\max_{\lambda\in\sigma_A^r(T)}|\lambda|$. See \cite[Corollary 3.18]{mz2023}.
\end{proof}
\begin{cor} \label{corn2}Let $A\in\mnc$ be a positive matrix. Then   $P$ is in the centre modulo the
radical of $\mnc^A$. That is $PT-TP$ belongs to the Jacobson radical of $\mnc^A$, for any $T\in\mnc^A$.\end{cor}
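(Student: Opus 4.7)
The plan is to exploit the elementary block structure of $\mnc^A$ that becomes available because, in the finite-dimensional setting, any positive $A\in\mnc$ is automatically well-supported; hence all the results of Section \ref{wellsupported} apply. In particular, the identity $PXP=PX$ valid for every $X\in\mnc^A$ (recorded in the proof of Theorem \ref{nempty}) is at our disposal.

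First I would set $N:=PT-TP$ and verify that $N$ is \emph{bi-annihilated} by $P$ in the sense that $PN=0$ and $N(I-P)=0$. The first identity follows from $PN=P^{2}T-PTP=PT-PT=0$, and the second from $N(I-P)=PT-PTP-TP+TP^{2}=0$, both using $PTP=PT$ and $P^{2}=P$. Geometrically, this says that $N$ sends all of $\h$ into $\NN(A)$ and vanishes on $\NN(A)$.

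Next I would use these two facts to show that the left ideal $N\mnc^A$ of $\mnc^A$ is nil. Indeed, for any $X\in\mnc^A$, in the composition $NXN$ the rightmost factor $N$ lands in $\NN(A)$; since $X\in\mnc^A$ preserves $\NN(A)$, so does $XN$; finally, the outermost $N$ annihilates $\NN(A)$. Hence $NXN=0$, which gives $(NX)^{2}=0$.

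Finally, since $\mnc^A$ is a finite-dimensional unital algebra, every nil one-sided ideal is contained in its Jacobson radical. The left ideal $N\mnc^A$ consists of square-zero (hence nilpotent) elements, so in particular $N=N\cdot I\in\rad(\mnc^A)$, which is exactly the claim. There is no real obstacle; the one point to be careful about is terminological: $\rad(\mnc^A)$ is to be interpreted as the ordinary Jacobson radical of the finite-dimensional algebra $\mnc^A$ itself, not as some $A$-analogue of the radical.
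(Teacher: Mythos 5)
Your proof is correct, but it follows a genuinely different route from the paper's. The paper argues through spectral radii: since $\sigma(T,\mnc)$ is finite and hence never separates the plane, $\sigma(T,\mnc)=\sigma(T,\mnc^A)$ by Aupetit's result on non-separating spectra; combining this with Corollary \ref{corn1} (and Proposition \ref{mr1}, which gives $r_A(T)=r(PT)$) yields $r(PT,\mnc^A)\le r(T,\mnc^A)$ for all $T$, and the conclusion is then imported from Bre\v{s}ar--\v{S}penko's theorem characterizing idempotents that are central modulo the radical. You replace all of this by the direct observation that $N=PT-TP$ satisfies $PN=0$ and $N(I-P)=0$ (both consequences of $P^2=P$ and $PTP=PT$), so that $\RR(N)\subseteq\NN(A)$ while $N$ vanishes on $\NN(A)$; since every $X\in\mnc^A$ leaves $\NN(A)$ invariant, $NXN=0$, hence $(NX)^2=0$ and $N\mnc^A$ is a nil one-sided ideal, which forces $N\in\rad(\mnc^A)$. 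One terminological slip: $N\mnc^A$ is a \emph{right} ideal, not a left ideal; this is harmless, since nil one-sided ideals of either handedness lie in the Jacobson radical (equivalently, $(NX)^2=0$ gives $r(NX)=0$ for all $X$, which is Aupetit's spectral characterization of the radical used elsewhere in the paper). Your approach buys two things: it is entirely elementary, proving the stronger statement that every element of $N\mnc^A$ squares to zero rather than merely that $N$ is in the radical; and, since the only input is $PXP=PX$, which the proof of Theorem \ref{nempty} establishes for an arbitrary positive $A$ in an arbitrary von Neumann algebra $\vm$, it actually shows $PT-TP\in\rad(\vm^A)$ in full generality, sidestepping the obstruction the paper records immediately after the corollary (the possible failure of $\sigma(T,\vm)=\sigma(T,\vm^A)$) and recovering the subsequent Proposition as the special case $\vm^A=\vm$.
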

\begin{proof}
  Observe that $\mnc^A$ is a closed subalgebra of $\mnc$. Further $\sigma(T, \mnc)$ does not separate
the complex  plane $\C$. Hence by \cite[Corollary 3.2.14.]{aupetit1991primer} we get $\sigma(T, \mnc)=\sigma(T, \mnc^A)$ for any $T\in\mnc^A$. This together with Corollary \ref{corn1} entail that $r(PT, \mnc^A)\le r(T, \mnc^A)$ for any $T\in\mnc^A$. Hence $P$ is in the centre modulo the
radical of $\mnc^A$, by \cite[Theorem 3.1.5]{aupetit1991primer} and \cite[Theorem 3.1]{brevsar2012determining}.
\end{proof}
 \begin{rem}
  In general $\mnc^A$ is not semi-simple. To see why this take $\vm=\mathcal{M}_2(\mathbb{C})$   and let
 $
A =\begin{bmatrix}
1 & 0\\
0 & 0
\end{bmatrix}$.
Then we have
\begin{align*}
\vm_{A}=\left\{\begin{bmatrix}
\alpha & 0 \\
\beta & \gamma
\end{bmatrix}: \,(\alpha, \beta, \gamma)\in\mathbb{C}^3\right\}.
\end{align*}
Using the spectral characterization of the radical in Banach  algebras (see \cite{aupetit1991primer}), we see that the matrix $\begin{bmatrix}
0 & 0\\
1 & 0
\end{bmatrix}$ is in the radical of $\vm^A$. whence  $\mnc^A$ is not semi-simple.
\end{rem}
Observe that a similar reasoning as in the  proof of Corollary \ref{corn2} cannot be applied for general von Neumann algebras, since the equality $\sigma(T, \vm)=\sigma(T, \vm^A)$ may fails for some $T\in\vm$. Nevertheless, we can state the following:
\begin{prop}
 Let $\vm$ be a von Neumann algebra and
$P$ be the orthogonal projection onto the range of $A$. If $A$ is well supported   and $\vm=\vm^A$ then  $P$ is in the centre of $\vm$. \end{prop}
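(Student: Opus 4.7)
The plan is to reduce everything to the observation, already recorded in the excerpt, that every $T\in\vm^A$ sends $\NN(A)$ into itself. Since $A$ is well‑supported (in fact we only need $P$ to be the projection onto $\overline{\RR(A)}\in\vm$), this invariance property can be rephrased as the identity $PT=PTP$ in $\vm$, and the hypothesis $\vm=\vm^A$ turns this into a statement about \emph{all} elements of $\vm$.

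First, I would recall that the excerpt notes, right after Lemma \ref{adjoint}, that $T\in\lho$ implies $T(\NN(A))\subseteq \NN(A)$. Because $\NN(A)=\RR(I-P)$, this is equivalent to $(I-P)T(I-P)=T(I-P)$, i.e.\ $PT(I-P)=0$, i.e.\ $PT=PTP$. By Lemma \ref{adjoint}, $\vm^A=\lho\cap \vm$, so every $X\in\vm^A$ satisfies
\[
PX \;=\; PXP.
\]

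Next I would exploit the standing hypothesis $\vm=\vm^A$: the displayed identity then holds for \emph{every} $X\in\vm$. Since $\vm$ is a $*$-algebra, $X^{*}\in\vm$ as well, so $PX^{*}=PX^{*}P$. Taking adjoints of this latter equality (and using $P^{*}=P$) gives $XP=PXP$. Combining the two identities yields
\[
PX \;=\; PXP \;=\; XP \qquad \text{for every } X\in\vm,
\]
which is precisely the statement that $P\in\vm'$. Since $P\in\vm$ (as recorded at the start of the introduction), we conclude $P\in \vm\cap\vm'=Z(\vm)$.

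There is essentially no obstacle here: the well‑supportedness of $A$ is only used to guarantee that $P\in\vm$ and to match the hypotheses of the surrounding machinery, while the real mechanism is the $*$‑invariance of $\vm$ combined with the elementary fact that elements of $\vm^A$ preserve $\NN(A)$. The only minor care required is to make clear that ``$T(\NN(A))\subseteq\NN(A)$'' is genuinely equivalent to $PT=PTP$, and to invoke the $*$‑closure of $\vm$ to upgrade the one‑sided relation $PX=PXP$ to the two‑sided relation $PX=XP$.
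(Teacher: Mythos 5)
Your proof is correct, but it follows a genuinely different route from the paper's. The paper argues via spectral radii: it invokes Corollary \ref{corn1} (which rests on the identification $\sigma_A(T)=\sigma(PT,P\vm P)$ for well-supported $A$) to get $r(PT)\le r(T)$ for every $T\in\vm=\vm^A$, and then cites Bre\v{s}ar--\v{S}penko's characterization of central elements through spectral radius domination to conclude that the projection $P$ is central. You instead use only the elementary fact, recorded after Lemma \ref{adjoint} and again in the proof of Theorem \ref{nempty}(1), that every $X\in\vm^A$ leaves $\NN(A)=\RR(I-P)$ invariant, i.e.\ $PX=PXP$; the hypothesis $\vm=\vm^A$ extends this to all of $\vm$, and applying it to $X^{*}$ and taking adjoints gives $XP=PXP=PX$, so $P\in\vm\cap\vm'$. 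Your argument buys two things: it avoids the external theorem of Bre\v{s}ar--\v{S}penko and the spectral machinery of Section \ref{wellsupported} entirely, and it shows that the well-supportedness of $A$ is not actually needed for this proposition (it enters the paper's proof only through Corollary \ref{corn1}), since $P\in\vm$ holds for any positive $A$ in a von Neumann algebra. The one point worth spelling out, which you do, is the equivalence between $X(\NN(A))\subseteq\NN(A)$ and $PX=PXP$.
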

\begin{proof}
  Assume now that $\vm=\vm^A$. Again Corollary \ref{corn1} entails that $r(PT)\le r(T)$ for any $T\in\vm$. Theorem 3.1 of \cite{brevsar2012determining} implies that $P$ is a central projection.
\end{proof}

 \begin{rem}
   If $\vm$ is a factor and $A$ is well supported, then  $\vm=\vm^A$ if and only $A$ is invertible.
   \end{rem}

In view of proposition \ref{mr1} and Theorem \ref{inclu},  natural questions are suggested:
\begin{qu}
If $A$ is not necessarily well-supported, does $\sigma_A(T)=\sigma(PT, P\vm P)$ (resp. $\sigma_A(T)\subset\sigma(T)\cup\{0\}$) for any $T\in\vm^A$?

\end{qu}
These inclusions may be  proper in general. This follows from the following example:
 \begin{example}
   Let $(a_n)_{n\in\Z}$ be defined by $a_n = 1$ if $n< 0$ and $a_n =\frac{1}{n!}$ if $n\ge 0$. Let $\h=\ell^2(\Z)$ and
 consider the operators $A$ and $T$, given by
\[A^{1/2}x =\sum_{n\in\Z}a_nx_ne_n,\ \ \text{and}\ \ Tx=\sum_{n\in\Z}x_ne_{n+1},\] for any $x=\sum_{n\in\Z} x_n e_n$. Here ${e_n}$ denotes the standard orthonormal basis of $\h$.

It is well known that $T$ is a unitary operators and $\sigma(T)=\T$, where $\T$  denotes the set of all complex
numbers of modulus one. Note also that $T^{-1}=T^*$ is given by $T^* e_n=e_{n-1}$.  See for instance \cite{Bourhim2003OnTL, Aniello} for more information on shift operators.

Since $a_{n+1}\le a_n$ for any $n\in\Z$, one can see easily that $T\in\lho$ and $\|T\|_A=1$. In particular we can take the operator $L$ defined by $L^*e_n=\frac{a_{n+1}}{a_n} e_{n+1}$ as an $A^{1/2}$-adjoint of $T$. It is clear that $\|L\|_A=1$. On the other hand $T^*\notin\lho$, since
$\frac{\|A^{1/2}T^*e_n\|}{\|A^{1/2}e_n\|}=n$,  for any $n\ge 2$.

We claim that \[\sigma_A(T)=\overline{D}(0, 1):=\{\lambda\in\C: |\lambda|\le 1\}.\]
 Indeed:  since $A$ is injective we see that $\sigma(PTP, P\lh P)=\sigma(T)=\T$. This together with Theorem \ref{inclu} imply that $\T\subset\sigma_A(T)$. Further, since $\|T\|_A=\|L\|_A=1$, it follows from Corollary \ref{spectreborne} that $\sigma_A(T)\subset \overline{D}(0, 1)$.

  Now, it is clear that $0\in\sigma_A(T)$ since $T^*\notin\lho$. If $0<|\lambda|<1$
 then  $\|\lambda T^{-1}\|=\|\lambda T^{*}\|<1$. By \cite[Theorem 1.2.2]{murphy1990c} we have
 \[(T-\lambda)^{-1}=T^{-1}(1-\lambda T^{-1})^{-1}=T^{-1}\sum_{k\ge0}(\lambda T^{-1})^k=T^{*}\sum_{k\ge0}(\lambda T^{*})^k.
\]
If $\lambda\notin\sigma_A(T)$, then $A(T-\lambda)S=AS(T-\lambda)S=A$ for some $S\in\lho$. Accordingly $(T-\lambda)S=S(T-\lambda)S=I$. Whence $S=(T-\lambda)^{-1}=T^{*}\sum_{k\ge0}(\lambda T^{*})^k$ belongs to $\lho$. But
\begin{eqnarray*}
  A^{1/2}(T-\lambda)^{-1}e_n &=&A^{1/2}\left(T^{*}\sum_{k\ge0}(\lambda T^{*})^k\right)e_n
  \\
  &=& \sum_{k\ge0}(\lambda)^k A^{1/2}e_{n-1-k}
   \\
    &=& \sum_{k=0}^{n-1}(\lambda)^k \frac{1}{(n-1-k)!}e_{n-1-k}+\sum_{k\ge n}(\lambda)^k  e_{n-1-k}
\end{eqnarray*}
for $n$ large enough. Hence
\[\frac{\|A^{1/2}(T-\lambda)^{-1}e_n \|}{\|A^2e_n\|}=n!\left\|\sum_{k=0}^{n-1}(\lambda)^k \frac{1}{(n-1-k)!}e_{n-1-k}+\sum_{k\ge n}(\lambda)^k  e_{n-1-k}\right\|\] converges to $\infty$ if $n\to\infty$. This contradicts the fact that $(T-\lambda)^{-1}\in\lho$. Thus $\lambda\in\sigma_A(T)$ as desired.

Hence we have shown that  $\sigma_A(T)=\overline{D}(0, 1)$ which implies that $\sigma(PT, P\vm P)=\T\varsubsetneq \sigma_A(T)$ and    $\sigma_A(T)\nsubseteq\sigma(T)\cup\{0\}$
 \end{example}
Our objective in the forthcoming is to present some situations where the aforementioned questions can be answered in the affirmative.

Using  Corollary \ref{corn2}  we have:
\begin{prop}
  Let $A\in\mnc$ be a positive matrix. Then $\sigma_A(T)\subseteq  \sigma(T)\cup\{0\}$ for any $T\in\mnc^A$.
\end{prop}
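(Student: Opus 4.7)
The plan is to combine Proposition \ref{mr1} with Corollary \ref{corn2}: the first identifies $\sigma_A(T)$ up to $0$ with the ordinary spectrum of $PT$, while the second says $P$ is central modulo the Jacobson radical of $\mnc^A$. Together these reduce the claimed inclusion to an elementary algebraic fact about commuting idempotents.

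First I would observe that every positive $A\in\mnc$ is automatically well-supported, since $\sigma(A)$ is a finite subset of $[0,\infty)$. Hence Proposition \ref{mr1} applies and gives
\[\sigma_A(T)\setminus\{0\}=\sigma(PT,\mnc)\setminus\{0\}.\]
Moreover, the argument used in the proof of Corollary \ref{corn2}---namely that $\sigma(X,\mnc)$ is finite and therefore does not separate $\C$, so that \cite[Corollary 3.2.14]{aupetit1991primer} applies---yields $\sigma(X,\mnc)=\sigma(X,\mnc^A)$ for every $X\in\mnc^A$. In particular the right-hand side above equals $\sigma(PT,\mnc^A)\setminus\{0\}$, so it suffices to show
\[\sigma(PT,\mnc^A)\setminus\{0\}\subseteq\sigma(T,\mnc^A).\]

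For this I would pass to the semisimple quotient $\mathcal{B}:=\mnc^A/\rad(\mnc^A)$, in which spectra are preserved. By Corollary \ref{corn2} we have $PT-TP\in\rad(\mnc^A)$, so the classes $\bar P,\bar T$ commute in $\mathcal{B}$, and clearly $\bar P^2=\bar P$. The desired inclusion then follows from the elementary fact that whenever an idempotent $p$ commutes with an element $t$ in a unital algebra and $\lambda\neq 0$ lies outside $\sigma(t)$, the element $\lambda-pt$ is invertible with
\[(\lambda-pt)^{-1}=p(\lambda-t)^{-1}+\lambda^{-1}(1-p),\]
as one checks by direct multiplication using $p^2=p$ and $[p,t]=0$. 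Pulling the resulting inclusion back through the quotient then completes the proof.

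The main obstacle is mostly bookkeeping: the argument shuttles between $\mnc$, the subalgebra $\mnc^A$, and the quotient $\mathcal{B}$, and each spectrum identification must be justified on the correct element. The essential use of finite dimensionality occurs in Corollary \ref{corn2}, which simultaneously provides both the spectral-permanence identity $\sigma(\cdot,\mnc)=\sigma(\cdot,\mnc^A)$ and the central-modulo-radical property of $P$; the preceding shift example shows that the conclusion genuinely fails without such a bridge.
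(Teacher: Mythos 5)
Your argument is correct and follows essentially the same route as the paper: both reduce $\sigma_A(T)\setminus\{0\}$ to $\sigma(PT,\mnc^A)$, pass to the semisimple quotient $\mnc^A/\rad(\mnc^A)$ where Corollary \ref{corn2} makes $\hat P$ a central idempotent, and conclude from there. The only difference is that where the paper cites \cite[Theorem 2.2]{brevsar2012determining} for the final inclusion, you verify it directly via the explicit inverse $p(\lambda-t)^{-1}+\lambda^{-1}(1-p)$, which is a correct and self-contained substitute.
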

\begin{proof}
  Let $R$ be the Jacobson radical of the  Banach algebra $\mnc^A$. By \cite[Theorem 3.1.5]{aupetit1991primer}, we know that  $\mnc^A/R$ is semi-simple and $\sigma(T, \mnc^A)=\sigma(T, \mnc^A/R)$, where $\hat{T}$ is the coset of $T$. Observe that $\hat{
  P}$ is in the centre of $\mnc^A/R$, by Corollary \ref{corn2}. This together with  \cite[Theorem 2.2]{brevsar2012determining} imply that
  \[\sigma(PT, \mnc^A)=\sigma(\hat{P}\hat{T}, \mnc^A/R)\subseteq\sigma(T)\cup\{0\}\ \text{for all} \ T\in\mnc^A.\]
  The proof is thus complete.
\end{proof}
The next theorem tells us that the equality  $\sigma_A(T)=\sigma(PT, P\vm P)$ holds true if:  $T\in\{A\}'$, or  if $T$ is algebraic.
 Here
 \[\{A\}' = \{T \in\lh: TA=AT\}\cap\vm\] is the commutant of $A$ in $\vm$. It is worth observing that $\{A\}'\subset\vm^A$  by Fuglede's theorem.
\begin{thm}\label{com}
Let $\vm$  be a von Neumann algebra  and $A\in\vm$ be positive. The following statements hold.
\begin{enumerate}
  \item $\sigma_A(T)= \sigma(PT, P\vm P)  \ \text{ for any}\ T\in\{A\}'$.  In particular if $\vm$ and $\vn$ are two von Neumann algebras such that $A\in\vn\subseteq\vm$, then
$\sigma_A(T,\vm)=\sigma_A(T,\vn)$  for every $T\in\{A\}'\cap\vn$.
\item If   $T\in\vm^A$ is algebraic, then $\sigma_A(T)= \sigma(PT, P\vm P)$.
\end{enumerate}
\end{thm}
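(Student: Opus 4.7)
The plan is to derive both equalities from the inclusion $\sigma(PT, P\vm P) \subseteq \sigma_A(T)$, which holds for every $T \in \vm^A$ by Theorem \ref{nempty}-(1) together with the identity $PTP = PT$ (valid because $T(\NN(A)) \subseteq \NN(A)$ for any $T \in \vm^A$). In both parts the remaining task is the reverse inclusion $\sigma_A(T) \subseteq \sigma(PT, P\vm P)$.

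For part (1), first observe that $T \in \{A\}'$ automatically commutes with $P$, since $P$ belongs to the von Neumann algebra generated by $A$. Fix $\lambda \notin \sigma(PT, P\vm P)$ and let $S \in P\vm P$ be the inverse of $P(T-\lambda I) = (T-\lambda I)P$ in $P\vm P$. Since $A$ belongs to $P\vm P$ and commutes with both $T$ and $P$, it commutes with $P(T-\lambda I)$, and therefore with $S$; this places $S$ in $\{A\}' \subseteq \vm^A$. A short computation using $SP = S$ and $PT = TP$ yields $S(T-\lambda I) = P = (T-\lambda I)S$ in $\vm$, and multiplying by $A$ with $AP = A$ shows that $S$ is an $A$-inverse of $T-\lambda I$. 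The ``in particular'' assertion follows at once: applying the equality in both $\vm$ and $\vn$ gives $\sigma_A(T,\vm) = \sigma(PT, P\vm P)$ and $\sigma_A(T,\vn) = \sigma(PT, P\vn P)$, and \cs-algebra spectral permanence \cite[Theorem 2.1.11]{murphy1990c} applied to the unital inclusion $P\vn P \subseteq P\vm P$ closes the argument.

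For part (2), write the minimal polynomial of $T$ as $p(z)=\prod_{i=1}^k (z-\lambda_i)^{n_i}$ and introduce the Riesz projections $E_i$ along with the nilpotent parts $N_i = (T-\lambda_i)E_i$; each $E_i$ and $N_i$ is a polynomial in $T$, hence lies in $\vm^A$. A preliminary observation is that for $\mu \notin \{\lambda_1,\dots,\lambda_k\}$ the operator $T-\mu I$ admits a two-sided inverse which is itself a polynomial in $T$, so $\sigma_A(T) \subseteq \{\lambda_1,\dots,\lambda_k\}$. The heart of the matter is to show that $\lambda_i \notin \sigma(PT, P\vm P)$ forces $\lambda_i \notin \sigma_A(T)$. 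I would exploit the product rule $(PX)(PY) = PXY$ for $X, Y \in \vm^A$ (a consequence of $PX(I-P) = 0$) to derive the pair of identities $(PT - \lambda_i P)(PE_i) = (PE_i)(PT - \lambda_i P) = PN_i$, and to verify that $PN_i$ commutes with $PT - \lambda_i P$, and hence with the inverse $S := (PT - \lambda_i P)^{-1} \in P\vm P$. These give $PE_i = S \cdot PN_i = PN_i \cdot S$, and by iteration together with $(PN_i)^n = PN_i^n$, one obtains $(PE_i)^n = S^n PN_i^n$. Since $PE_i$ is idempotent in $P\vm P$ while $N_i^{n_i} = 0$, this forces $PE_i = (PE_i)^{n_i}=0$, whence $AE_i = A(PE_i) = 0$. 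Finally, on $\RR(I - E_i)$ the operator $T - \lambda_i I$ is invertible with inverse a polynomial in $T$; extending by zero one gets $R \in \vm^A$ satisfying $R(T - \lambda_i I) = (T - \lambda_i I)R = I - E_i$, and combining with $AE_i = 0$ yields $AR(T - \lambda_i I) = A = A(T - \lambda_i I)R$, so $\lambda_i \notin \sigma_A(T)$.

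The main obstacle I anticipate is the Peirce-style deduction of $PE_i = 0$ in part (2). It requires coordinating the product identity $(PX)(PY) = PXY$ for elements of $\vm^A$, the commutativity of $PN_i$ with $PT - \lambda_i P$ (and hence with $S$), and the confrontation of the idempotency of $PE_i$ with the nilpotency of $PN_i$. Each ingredient is elementary in isolation, but the concluding chain $PE_i = S^{n_i} PN_i^{n_i} = 0$ breaks down without the ability to pass $S$ through $PN_i$, which is precisely where working inside $\vm^A$ rather than all of $\vm$ is decisive.
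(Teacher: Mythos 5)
Your proposal is correct, and the two parts sit differently relative to the paper. Part (1) is essentially the paper's own argument: both reduce the problem to showing that the inverse $S$ of $P(T-\lambda I)$ in $P\vm P$ lies in $\vm^A$ and then checking $AS(T-\lambda I)=A(T-\lambda I)S=A$. The paper locates $S$ in the von Neumann subalgebra of $P\vm P$ generated by $P$, $PTP$ and $PT^*P$, which is contained in $\{A\}'\subseteq\vm^A$; you observe directly that $A\in P\vm P$ commutes with $P(T-\lambda I)$ and hence with its inverse. This is the same mechanism packaged differently (and you correctly source the easy inclusion from Theorem \ref{nempty}-(1), which, unlike Theorem \ref{inclu}, needs no well-supportedness). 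Part (2) is where you genuinely diverge. The paper's proof is a one-liner: $PTP$ is algebraic in $P\vm P$ because $(PTP)^n=PT^nP$, so when $PTP-\lambda P$ is invertible its inverse is a polynomial in $PTP$, hence of the form $Pq(T)P\in\vm^A$, and the verification that this is an $A$-inverse of $T-\lambda I$ is routine. Your route through the primary decomposition --- Riesz idempotents $E_i$, nilpotents $N_i$, the deduction $PE_i=(PE_i)^{n_i}=S^{n_i}PN_i^{n_i}=0$, and the explicit $A$-inverse $R=r(T)$ with $R(T-\lambda_i I)=(T-\lambda_i I)R=I-E_i$ and $AE_i=0$ --- is longer but sound, and it buys the sharper structural statement that $\sigma_A(T)$ consists exactly of those roots $\lambda_i$ of the minimal polynomial with $PE_i\neq 0$. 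Both approaches ultimately rest on the same two facts, namely $PXP=PX$ for $X\in\vm^A$ and the stability of $\C[T]$ under the relevant inversions; the paper simply invokes the ``inverse of an algebraic element is a polynomial in it'' lemma before, rather than after, decomposing.
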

\begin{proof}
 Observe that $\sigma(PT, P\vm P)\subseteq \sigma_A(T)$, by Theorem \ref{inclu}. For the converse assume that $PTP$ is invertible in $P\vm P$. Then $\left(PTP\right)^{-1}$ belongs to the von Neumann subalgebra $ \mathcal{W}(PTP)$ of $P\vm P$ generated by $P, PTP$  and $PT^*P$, the weak closure in $P \vm P$ of the set of complex polynomials in $P, PTP$  and $PT^*P$. Note that $ \mathcal{W}(PTP)$ is also norm closed and $\mathcal{W}(PTP)\subset\vm^A$. Hence $\sigma_A(T)= \sigma(PT, P\vm P)$.

 Next, if $\vm$ and $\vn$ are two von Neumann algebras such that $A\in\vn\subseteq\vm$ and $T\in\{A\}'\cap\vn$, then $\sigma(PT, P\vm P)=\sigma(PT, P\vn P)$ by
  the spectral permanence theorem (see \cite[Theorem. 2.1.11]{murphy1990c}). Whence $\sigma_A(T,\vm)=\sigma_A(T,\vn)$.

  Assume now that   $T\in\vm^A$ is algebraic. Let $\lambda\in \C\backslash\sigma(PTP, P\vm P)$. As $T$ is algebraic then so is $PTP-P$ in the algebra $P\vm A$.  Hence, $ \left(PTP-\lambda P\right))^{-1}$ is a polynomial  in $PTP-P$. Whence $ \left(PTP-\lambda P\right)^{-1}\in\vm^A$.  This completes the proof.

\end{proof}
An intriguing sidelight of Theorem \ref{com}
is that the $A$-spectrum does not depends on the commutative von Neumann algebra containing $T$.
\begin{cor} Let $\vm$  be a von Neumann algebra  and $A\in\vm$ be positive. The following statements hold.
\begin{enumerate}
\item If $\vm $ is  commutative, then  $\sigma_A(T)= \sigma(PT, P\vm P)\subseteq\sigma(T)\cup\{0\} $ for any $T\in\vm$. Moreover,
\begin{equation}\label{car}
  \sigma_A(T)\backslash\{0\}=\Big\{g(T): g \ \text{is a character such that}\ g(P)=1\Big\}\backslash\{0\}
\end{equation}
\item $\sigma_A(P)=\{1\}$ and $A$ is $A$-invertible if and only if $A$ is well supported.
  \item $\sigma_A(A)=\sigma(A, \vm)\backslash\{0\}$ if $A$ is well supported  and $\sigma_A(A)=\sigma(A, \vm) $ otherwise.
  \item For any $x, y\in\h$, we have $\sigma_A(x\otimes A^{1/2} y)\subseteq\left\{0, \la x, A^{1/2}y\ra\right\}$.
  \end{enumerate}
\end{cor}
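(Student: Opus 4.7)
The plan is to derive each of the four assertions as direct consequences of Theorem \ref{com}, Corollary \ref{cor1}, and standard spectral permanence for hereditary \cs-subalgebras (Rickart's relation $\sigma(X,\vm)=\sigma(X,P\vm P)\cup\{0\}$ for $X\in P\vm P$). Nothing genuinely new is needed; the work is in unpacking each case correctly.

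For part (1), commutativity gives $\vm\subseteq\{A\}'$, so Theorem \ref{com}(1) furnishes the equality $\sigma_A(T)=\sigma(PT,P\vm P)$ for every $T\in\vm$. To get the inclusion in $\sigma(T)\cup\{0\}$ I would combine Rickart's relation with the fact that in a commutative von Neumann algebra the spectrum is determined by characters: any $\lambda\in\sigma(PT,\vm)$ can be written $\lambda=g(P)g(T)$ with $g$ a character, and $g(P)\in\{0,1\}$ since $P$ is a projection, so $\lambda\in\{0\}\cup\sigma(T)$. The formula \eqref{car} then follows by observing that a non-zero value $g(PT)=g(P)g(T)$ forces $g(P)=1$.

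For part (2), applying Theorem \ref{com}(1) to $P\in\{A\}'$ gives $\sigma_A(P)=\sigma(P,P\vm P)=\{1\}$ because $P$ is the unit of $P\vm P$. The implication ``well-supported $\Rightarrow$ $A$-invertible'' is exactly Corollary \ref{cor1}(2). For the converse I would feed $T=A$ into condition (ii) of Theorem \ref{th1}, obtaining $A^2\le\alpha A^4$ for some $\alpha>0$; continuous functional calculus applied to the positive element $A$ then forces $\sigma(A)\subseteq\{0\}\cup[\alpha^{-1/2},\infty)$, which is the definition of well-supported. Part (3) splits on the same dichotomy: if $A$ is well-supported Corollary \ref{cor1}(2) gives $\sigma_A(A)=\sigma(A)\setminus\{0\}$; otherwise part (2) shows $0\in\sigma_A(A)$, and since $A\in\{A\}'$ Theorem \ref{com}(1) together with Rickart's relation give $\sigma_A(A)=\sigma(A,P\vm P)=\sigma(A,\vm)\setminus\{0\}$ or $\sigma(A,\vm)$ depending on whether $0$ is already present, so in the non-well-supported case both sides coincide with $\sigma(A,\vm)$.

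Part (4) is a short algebraic observation: for $T=x\otimes A^{1/2}y$ one has $T^2=\la x,A^{1/2}y\ra T$, so $T$ is algebraic in $\vm^A$ and Theorem \ref{com}(2) applies. Since $PA^{1/2}=A^{1/2}$, the operator $PT=Px\otimes A^{1/2}y$ satisfies the same identity $(PT)^2=\la x,A^{1/2}y\ra PT$, hence its spectrum in $P\vm P$ is contained in $\{0,\la x,A^{1/2}y\ra\}$. The only step I expect any mild friction from is the converse in (2) — the passage from $A^2\le\alpha A^4$ to a spectral gap at $0$ via functional calculus — but this is routine once phrased correctly; all other parts are formal consequences of results already proved in Section \ref{wellsupported}.
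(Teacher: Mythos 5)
Your proof is correct and follows essentially the same route as the paper: everything is reduced to Theorem \ref{com}, Rickart's relation $\sigma(X,\vm)=\sigma(X,P\vm P)\cup\{0\}$, and the character description of the spectrum in the commutative case (which is exactly what the paper means by ``the same scheme as Proposition \ref{mr1}''). You are in fact more complete than the paper in one spot: the published proof never addresses the ``$A$-invertible $\Rightarrow$ well-supported'' direction of item (2), whereas your functional-calculus argument passing from condition (ii) of Theorem \ref{th1} to $A^2\le\alpha A^4$ and hence to $\sigma(A)\subseteq\{0\}\cup[\alpha^{-1/2},\infty)$ supplies it correctly.
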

\begin{proof}
  The first statement   follows from Theorem \ref{com}-(1) since $\{A\}'=\vm$. The proof of \eqref{car}   uses the same scheme  as in the proof of Proposition \ref{mr1}. For the second statement, note that $P\in\{A\}'$ and then $\sigma_A(P)=\sigma(P, P\vm P)=\{1\}$.  Items (3) and (4) follows from Theorem \ref{com}-(2).
\end{proof}
\begin{rem}
  Using Lemma \ref{KM}, one can see that $r_A(A)=r(A)=\|A||=\|A\|_A$.
\end{rem}
\begin{cor}  Let $\vm$ be a commutative von Neumann algebra, $A$ a well-supported positive element  in $\vm$ and $T, T_k\in\vm^A$ ($k\in\mathbb{N}$), $\|T_k-T\|_A\to 0$.
Then $\lambda\in\sigma_A(T)$ if and only if there exist points $\lambda_k\in\sigma_A(T_k)$ ($k = 1, 2,\cdots$) such
that $\lambda = \displaystyle\lim_{k\to\infty}\lambda_k$.
    \end{cor}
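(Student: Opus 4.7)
The plan is to reduce the problem to the classical continuity of the spectrum in commutative unital \cs-algebras. First, by Theorem \ref{inclu}, since $A$ is well-supported we have
\[
\sigma_A(T)=\sigma(PT,P\vm P) \quad \text{and}\quad \sigma_A(T_k)=\sigma(PT_k,P\vm P)
\]
for every $k$, so everything can be expressed in terms of ordinary spectra in the unital commutative \cs-algebra $P\vm P$ (whose unit is $P$).

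Next I would pass from $A$-norm convergence to norm convergence in $P\vm P$. Since $\vm$ is commutative, $\vm_A=\vm^A=\vm$, and for any $S\in\vm$ the operator $L:=S^*$ satisfies $A^{1/2}S=SA^{1/2}=L^*A^{1/2}$; by Theorem \ref{nempty}-(2) this gives $\|S\|_A=\|PL\|=\|PS^*\|=\|PS\|$, where the last equality uses $PS=SP$. Applying this to $S=T_k-T$ yields $\|PT_k-PT\|\to 0$ in $P\vm P$.

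At this point I would invoke the classical continuity of the spectrum in commutative unital \cs-algebras. Via the Gelfand representation $P\vm P\cong C(\Omega)$ for a compact Hausdorff space $\Omega$, the spectrum $\sigma(S,P\vm P)$ equals the range of the Gelfand transform $\widehat{S}$ on $\Omega$; uniform convergence $\widehat{PT_k}\to\widehat{PT}$ on $\Omega$ then forces Hausdorff convergence of the corresponding ranges (a short compactness argument: any limit of points in $\widehat{PT_k}(\Omega)$ lies in $\widehat{PT}(\Omega)$, and every point of $\widehat{PT}(\Omega)$ is attained at some $\omega\in\Omega$, hence approached by $\widehat{PT_k}(\omega)$). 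Thus $\sigma(PT_k,P\vm P)\to\sigma(PT,P\vm P)$ in the Hausdorff metric on compact subsets of $\C$.

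Hausdorff convergence $\sigma_A(T_k)\to\sigma_A(T)$ then unpacks precisely to the two-sided statement of the corollary: the \textbf{\emph{if}} direction corresponds to upper semicontinuity (any limit of points $\lambda_k\in\sigma_A(T_k)$ lies in $\sigma_A(T)$), while the \textbf{\emph{only if}} direction corresponds to lower semicontinuity (every $\lambda\in\sigma_A(T)$ is a limit of such $\lambda_k$). The main subtlety is the lower semicontinuity direction, which fails in general noncommutative Banach algebras (Newburgh's theorem yields only upper semicontinuity without commutativity) and here hinges crucially on the commutativity of $P\vm P$ that the hypothesis on $\vm$ provides.
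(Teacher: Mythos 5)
Your proof is correct and is in substance the paper's own argument: the paper uses the character description of $\sigma_A$ from Corollary \ref{cor1} (every $\lambda\in\sigma_A(S)$ is $\phi(S)$ for a character $\phi$ with $\phi(A)\neq 0$) together with the bound $|\phi_k(T_k)-\phi_k(T)|\le\|T_k-T\|_A$ to transport spectral points between $T$ and $T_k$, which is exactly the mechanism behind your Hausdorff convergence of the Gelfand-transform ranges. Your detour through $P\vm P$, the identity $\|S\|_A=\|PS\|$ from Theorem \ref{nempty}-(2), and the Gelfand representation is a repackaging of the same evaluate-the-same-multiplicative-functional idea, so the two proofs coincide in essence.
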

    \begin{proof}
      If $\lambda\in\sigma_A(T)$, then there exists a character $\phi$ with $\phi(A)\neq 0$ and $\phi(T) =\lambda$. Set $\lambda_k =\phi(T_k)$.
Then by Theorem \ref{refine}, $\lambda_k\in\sigma_A(T_k)$ and $\lambda_k\to\lambda$.

Conversely, let $\lambda_k\in\sigma_A(Tk)$ such that $\lambda_k$ converge to some $\lambda$ in $\C$. By Corollary \ref{cor1}, for each $k$ there exists a character $\phi_k$ such that $\phi_k(A)\neq 0$ and $\phi_k(T_k)=\lambda_k$.  Set $\alpha_k = \phi_k(T)$ which belongs to $\sigma_A(T)$. We have
\[|\lambda-\alpha_k| \le |\lambda-\lambda_k|+|\lambda_k-\alpha_k| = |\lambda-\lambda_k|+|\phi_k(T_k)-\phi_k(T)| \le |\lambda-\lambda_k|+\|T_k-T\|_A\to 0.\]
Thus $\alpha_k\to \lambda$ and $\lambda\in\sigma_A(T)$.
    \end{proof}
    \section{$A$-spectral characterizations}
    \label{GKZ} This section presents  several applications of the results of Section \ref{wellsupported}. The first one characterizes von Neumann algebra for which $\sigma_A(T)$ is finite for any $T\in\vm^A$.

    A well known result of Kaplansky (see \cite{kaplansky_1954} or \cite{aupetit1991primer}), asserts that a semi-simple Banach algebra $\A$ is finite dimensional if and only if $\sigma(X)$ is finite for any $X\in\A$. 
     The purpose of the next theorem is to show that this result need not to be true if the spectrum is replaced by the $A$-spectrum and if $A$ is not one to one. 

    Recall that the socle of $\vm$, denoted by $\soc(\vm)$, is the sum of all minimal left (or right) ideals
of $\vm$. Note that the socle of any von Neumann algebra exists. Further, $\soc(\vm)$ is a two sided  ideal of $\vm$ and is consisting of
all elements $T\in\vm$ satisfying $\sigma(TX)$ is  finite for all $X\in\A$, and that all its
elements are algebraic. We refer the
reader to \cite{Aupetitbook2,  aupetit1991primer} for more details.

We have the following.
\begin{thm}
  \label{kapl} Let $\vm$ be a von Neumann algebra and $A\in\vm$ be positive. The following are equivalent.
  \begin{enumerate}
    \item $\sigma_A(T)$ is finite for any $X\in\vm^A$.
    \item $A$ is in the socle  of $\vm$.
  \end{enumerate}
\end{thm}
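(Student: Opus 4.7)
The plan is to prove the equivalence through the range projection $P$, linking the socle of $\vm$ to the finite--dimensionality of the corner $P\vm P$, and transferring information between $A$ and $P$ via the ideal property of the socle.

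For the direction (2) $\Rightarrow$ (1), I would first observe that every element of $\soc(\vm)$ is algebraic, so $A\in\soc(\vm)$ forces $\sigma(A)$ to be finite. Since $A$ is positive and self--adjoint, this makes $\sigma(A)\setminus\{0\}$ closed, so $A$ is well--supported, and the Moore--Penrose inverse $A^\dagger$ lives in $\vm$ with $P=AA^\dagger$. Because $\soc(\vm)$ is a two--sided ideal of $\vm$, we obtain $P\in\soc(\vm)$, and hence $PT\in\soc(\vm)$ for every $T\in\vm^A$. In particular $\sigma(PT,\vm)$ is finite. Invoking Proposition \ref{mr1} (which applies precisely because $A$ is well--supported) gives
\[
\sigma_A(T)\setminus\{0\}=\sigma(PT)\setminus\{0\},
\]
so $\sigma_A(T)$ is finite, proving (1).

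For the converse (1) $\Rightarrow$ (2), the first step is to embed $P\vm P$ into $\vm^A$. Using $AP=PA=A$ and Lemma \ref{adjoint} together with the remark that $T\in\lho$ is equivalent to $T(\NN(A))\subseteq\NN(A)$, one checks that any $X\in P\vm P$ satisfies $X=PXP$, so $X$ kills $\NN(A)=\RR(I-P)$ and therefore $X\in\vm^A$. Then Theorem \ref{nempty}--(1) yields
\[
\sigma(X,P\vm P)=\sigma(PXP,P\vm P)\subseteq\sigma_A(X),
\]
so hypothesis (1) forces $\sigma(X,P\vm P)$ to be finite for every $X\in P\vm P$. Since $P\vm P$ is a unital $C^*$--algebra, it is semi--simple, and Kaplansky's finite--spectrum theorem (\cite[Lemma 7]{kaplansky_1954}) then gives that $P\vm P$ is finite--dimensional.

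The last step is to deduce $A\in\soc(\vm)$ from the finite--dimensionality of $P\vm P$. A finite--dimensional $C^*$--algebra decomposes as a direct sum of matrix algebras, and picking a rank--one (hence minimal) projection $f$ of one of the summands, we have $f\leq P$, so $f=PfP$ and thus $f\vm f=f(P\vm P)f=\C f$. This shows that $f$ is a minimal projection of $\vm$, and in particular $f\in\soc(\vm)$. Since $P$ is a finite orthogonal sum of such $f$'s, we get $P\in\soc(\vm)$; and since $A=AP$ and the socle is a two--sided ideal, we conclude $A\in\soc(\vm)$.

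The main obstacle, I expect, is the verification that a minimal projection of the corner $P\vm P$ remains minimal in the larger algebra $\vm$ (so that finite--dimensionality of $P\vm P$ actually forces $P$ into $\soc(\vm)$, and not merely into the socle of $P\vm P$). This is exactly where the identity $f\vm f=f(P\vm P)f$ for $f\leq P$ is essential, and it is the one place the argument uses more than a bare appeal to Kaplansky's lemma and the ideal property of the socle.
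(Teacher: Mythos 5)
Your implication (2) $\Rightarrow$ (1) is essentially correct and is a reasonable alternative to the paper's (the paper instead quotes Aupetit--Mouton to conclude that $A\vm A=P\vm P$ is finite dimensional, but both arguments finish by invoking Proposition \ref{mr1}, for which well-supportedness of $A$ is indeed available). The genuine problem is in (1) $\Rightarrow$ (2), at the step ``any $X\in P\vm P$ kills $\NN(A)=\RR(I-P)$ and therefore $X\in\vm^A$''. The condition $X(\NN(A))\subseteq\NN(A)$ is only a \emph{necessary} consequence of membership in $\lho$, not a sufficient one; the Douglas criterion recalled after Lemma \ref{adjoint} is $X^*(\RR(A^{1/2}))\subseteq\RR(A^{1/2})$. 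In fact $P\vm P\subseteq\vm^A$ is false in general: if $A$ is injective with non-closed range (say $A=\mathrm{diag}(1,1/2,1/3,\dots)$ on $\ell^2$ and $\vm=\lh$), then $P=I$, so $P\vm P=\vm$ while $\vm^A=\lho\varsubsetneq\vm$ --- the paper's own shift example exhibits a unitary $T$ with $T^*\notin\lho$. Consequently you cannot apply hypothesis (1) to an arbitrary element of $P\vm P$, and the Kaplansky argument that $P\vm P$ is finite dimensional never gets off the ground. (The subsequent passage from minimal projections of $P\vm P$ to minimal projections of $\vm$, via $f\vm f=f(P\vm P)f=\C f$, is fine.)

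The gap can be repaired, but it needs an extra idea. Either (a) first apply the hypothesis to $T=A$ itself: since $A\in\{A\}'\subseteq\vm^A$, the set $\sigma_A(A)$ is finite, and by the corollary to Theorem \ref{com} it equals $\sigma(A)$ or $\sigma(A)\setminus\{0\}$; in either case $\sigma(A)$ is finite, so $A$ is well supported, $A\geq\alpha P$ for some $\alpha>0$, and \emph{only then} does $P\vm P\subseteq\vm^A$ hold, after which your argument goes through. Or (b) argue as the paper does: apply the hypothesis to $XA$, which genuinely lies in $\vm^A$ for every $X\in\vm$ (one may take $A^{1/2}X^*A^{1/2}$ as an $A^{1/2}$-adjoint), use Theorem \ref{nempty}-(1) together with Jacobson's lemma $\sigma(ST)\setminus\{0\}=\sigma(TS)\setminus\{0\}$ to conclude that $\sigma(AX)$ is finite for every $X\in\vm$, and then invoke \cite[Corollary 2.9]{Aupetit1996} to place $A$ in the socle directly, with no need to pass through the corner $P\vm P$ at all.
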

\begin{proof}
  Assume that $\sigma_A(T)$ is finite for any $X\in\vm^A$. Since $XA$ is in $\vm^A$ for any $X\in\vm$, we get $\sigma_A(AX)$ is finite for any $X\in\vm$. From Theorem  \ref{inclu} and the fact that $PA=A$, we infer that $\sigma(AX)$ is finite for any $X\in\vm$. Whence $A$ is in the socle of $\vm$, by \cite[Corollary 2.9]{Aupetit1996}. For the converse if $A$ is in the socle  of $\vm$. Again \cite[Corollary 2.9]{Aupetit1996} entails that $A\vm A$ is a finite dimensional von Neumann algebra containing $A$. Hence $P\in A\vm A$ and then $P\vm P\subseteq A\vm A$. But, since $PA=AP=A$, we infer that $P\vm P= A\vm A$ is finite dimensional.
  Therefore, by Proposition \ref{mr1}, we get $\sigma_A(X)\backslash\{0\}=\sigma(PX)\backslash\{0\}=\sigma(PXP)\backslash\{0\}$ is finite for any $X\in\vm^A$. The proof is thus complete.
\end{proof}
Another variant of Theorem \ref{kapl} is the following.
\begin{thm}
  \label{kap2} Let $\vm$ be a von Neumann algebra and $A\in\vm$ be positive.  If  $\sigma_A(T)$ has at most $n$ points for every $T\in\vm^A$, then $\dim A\vm A\le n^2$.

\end{thm}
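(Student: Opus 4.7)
The plan is to use Theorem \ref{kapl} to reduce the problem to a finite-dimensional C*-algebra, and then read off the quantitative bound from its Wedderburn decomposition. Since the hypothesis asserts that $\sigma_A(T)$ has at most $n$ points (in particular is finite) for every $T\in\vm^A$, Theorem \ref{kapl} applies and forces $A$ to lie in the socle of $\vm$. Retracing the proof of that theorem, one gets that $A$ is well-supported (since $\sigma(A)$ is finite), that $P\vm P=A\vm A$, and that this common algebra is finite dimensional.

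As a finite-dimensional C*-algebra with unit $P$, the Artin--Wedderburn theorem yields
\[
A\vm A=P\vm P\;\cong\;\bigoplus_{i=1}^{k}M_{n_i}(\C),\qquad \dim(A\vm A)=\sum_{i=1}^{k}n_i^{2}.
\]
The key translation is that the spectral hypothesis can be read directly inside $P\vm P$. Indeed, since $A$ is well-supported and $A\in\vm^A$, one checks (as in the proof of Theorem \ref{refine}) that $P\vm P\subseteq\vm^A$; moreover for any $T\in P\vm P$ one has $PT=T$, so Theorem \ref{inclu} gives
\[
\sigma_A(T)=\sigma(PT,P\vm P)=\sigma(T,P\vm P),\qquad T\in P\vm P.
\]
Thus the hypothesis forces $|\sigma(T,P\vm P)|\le n$ for every $T\in P\vm P$.

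Finally, I would exhibit a witness inside $\bigoplus_{i=1}^{k}M_{n_i}(\C)$: choose a block-diagonal element $T=(D_1,\dots,D_k)$ with each $D_i$ diagonal in $M_{n_i}(\C)$ having $n_i$ distinct eigenvalues, and with the $\sum_i n_i$ scalars chosen pairwise distinct across all blocks. Then $\sigma(T,P\vm P)=\bigcup_{i}\sigma(D_i)$ has exactly $\sum_{i}n_i$ elements, so the previous inequality forces $\sum_{i}n_i\le n$. Since each $n_i\ge 1$, the elementary inequality $\sum_{i}n_i^{2}\le\bigl(\sum_{i}n_i\bigr)^{2}$ then yields
\[
\dim(A\vm A)=\sum_{i=1}^{k}n_i^{2}\le\Bigl(\sum_{i=1}^{k}n_i\Bigr)^{2}\le n^{2},
\]
which is the desired bound. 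I do not anticipate a genuine obstacle; the only subtle point is verifying the inclusion $P\vm P\subseteq\vm^A$ and the identification $\sigma_A(T)=\sigma(T,P\vm P)$ on $P\vm P$, but once Theorem \ref{kapl} places us in the well-supported finite-dimensional setting both are immediate from Theorem \ref{inclu} and the proof of Theorem \ref{refine}.
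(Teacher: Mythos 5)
Your proposal is correct, and it agrees with the paper's proof up to the reduction step: both arguments invoke Theorem \ref{kapl} to place $A$ in $\soc(\vm)$, deduce that $A$ is well-supported and that $A\vm A=P\vm P$ is a finite-dimensional \cs-algebra, and then use Theorem \ref{inclu} to transfer the hypothesis $|\sigma_A(T)|\le n$ into a bound on ordinary spectra inside $P\vm P$. Where you diverge is the final, quantitative step. The paper concludes by citing Behncke's lemma on nilpotent elements in Banach algebras (\cite[Lemma 1]{Behncke1973NilpotentEI}) as a black box: a unital algebra in which every element has at most $n$ spectral values has dimension at most $n^2$. You instead make this step self-contained: writing $P\vm P\cong\bigoplus_{i=1}^{k}M_{n_i}(\C)$ by Artin--Wedderburn, exhibiting a block-diagonal witness with $\sum_i n_i$ pairwise distinct eigenvalues to force $\sum_i n_i\le n$, and then using $\sum_i n_i^2\le\bigl(\sum_i n_i\bigr)^2$. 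This buys transparency and removes an external dependency, at the cost of using the full \cs-structure (Wedderburn for finite-dimensional \cs-algebras), whereas Behncke's lemma is a general Banach-algebra statement; in the present setting that generality is not needed, so your route is arguably cleaner. The two points you flag as needing verification --- that $P\vm P\subseteq\vm^A$ and that $\sigma_A(T)=\sigma(T,P\vm P)$ for $T\in P\vm P$ --- do check out: since $A$ is well-supported, $(A^{1/2})^\dagger\in\vm$ and $S=(A^{1/2})^\dagger T^*A^{1/2}$ is an $A^{1/2}$-adjoint of any $T=PTP$, and then $PT=T$ makes Theorem \ref{inclu} give the identification directly, with no spurious $\{0\}$ to worry about.
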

\begin{proof}
 Note that by Theorem \ref{kapl}. $A$ is in the socle of $\vm$ and then $A$ is well supported. In particular $A\vm A=A^{1/2}\vm A^{1/2}$ is a finite dimensional  von Neumann  algebra. Further  using Theorem  \ref{inclu} and the hypothesis, we see that $\sigma(AX)=\sigma(A^{1/2} X A^{1/2})$ has   at most $n$ points for every $T\in\vm^A$.  The together with \cite[Lemma 1]{Behncke1973NilpotentEI} allow us to conclude.
\end{proof}
The next theorem characterizes elements $S$ and $T\in\vm^A$ satisfying $\sigma_A(SX)=\sigma_A(TX)$ for any $X\in\vm^A$. To do so, we shall need the following lemma.
\begin{lem}\label{eqs}
  If $\vm$ is von Neumann algebra and $S, T\in\vm$ satisfy $\sigma(SX)\backslash\{0\}=\sigma(TX)\backslash\{0\}$ for all $X\in\vm$, then
$S=T$.
\end{lem}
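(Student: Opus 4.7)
The plan is to exploit the unusual strength of the hypothesis---spectrum equality on every product $SX,TX$---to recover the strict equality $S=T$, which is a stringent uniqueness statement to establish. My initial moves would unlock some norm-level information for free: setting $X=I$ gives $\sigma(S)=\sigma(T)$ modulo $\{0\}$, and setting $X=S^*$ combined with Jacobson's identity $\sigma(AB)\setminus\{0\}=\sigma(BA)\setminus\{0\}$ and positivity of $SS^*$ produces $\|S\|^2=r(SS^*)=r(TS^*)\le\|T\|\,\|S\|$, whence $\|S\|\le\|T\|$; symmetry then forces $\|S\|=\|T\|$. Further manipulations of the same kind show that both $ST^*$ and $TS^*$ have nonnegative real spectra, a striking constraint already.

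For the core argument I would fall back on the pure-state description of the spectrum developed in Theorem \ref{refine} (specialized to $A=I$): every nonzero $\lambda\in\sigma(Z)$ is of the form $g(Z)$ for some pure state $g$ that is left- or right-multiplicative at $Z$, and the pure states separate the points of $\vm$. The natural strategy is to fix an arbitrary pure state $\phi\in\pM$ and pass to its GNS triple $(\pi_\phi,\h_\phi,\xi_\phi)$; since $\pi_\phi$ is irreducible, Kadison's transitivity theorem lets me pick $X\in\vm$ with $\pi_\phi(X)$ equal to the rank-one projection onto $\xi_\phi$. Then $\pi_\phi(SX)$ and $\pi_\phi(TX)$ are rank-one operators on $\h_\phi$ with spectra $\{0,\phi(S)\}$ and $\{0,\phi(T)\}$ respectively, and the standard spectral inclusion $\sigma(\pi_\phi(Y))\subseteq\sigma(Y)$ places both $\phi(S)$ and $\phi(T)$ into the common spectrum $\sigma(SX)=\sigma(TX)$ modulo $\{0\}$.

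The decisive obstacle is to upgrade ``$\phi(S),\phi(T)$ lie in the common spectrum'' to the strict equality $\phi(S)=\phi(T)$, because $\sigma_{\vm}(TX)$ may strictly contain the two-point set $\sigma_{\mathcal B(\h_\phi)}(\pi_\phi(TX))$. One route would be to sharpen the choice of $X$ so that it is concentrated near $\phi$ in the primitive spectrum of $\vm$, using Glimm's lemma or a separation-of-pure-states argument to collapse $\sigma(TX)\setminus\{0\}$ to $\{\phi(T)\}$. A more efficient route, likely the one intended here, is to invoke the result of Bre\v{s}ar in \cite{brevsar2012determining}: in any semisimple Banach algebra, the condition $\sigma(ax)=\sigma(bx)$ for all $x$ already forces $a=b$, and since a von Neumann algebra has trivial Jacobson radical this applies directly; the ``$\setminus\{0\}$'' bookkeeping is handled by a small translation $S\mapsto S+\lambda I$, $T\mapsto T+\lambda I$ making both shifted operators invertible, followed by $\lambda\to 0$. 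Either way, pure-state separation on $\vm$ then delivers $S=T$.
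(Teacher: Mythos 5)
Your final route---reducing to Theorem 2.6 of \cite{brevsar2012determining}---is exactly what the paper does: its entire ``proof'' is the sentence that the argument is an adaptation of that theorem and will be omitted. So you have identified the right external ingredient. The problem is the bridge you build to it. The hypothesis here only controls the punctured spectra $\sigma(SX)\backslash\{0\}$, and your device for reconciling this with a full-spectrum hypothesis---translating $S\mapsto S+\lambda I$, $T\mapsto T+\lambda I$ and letting $\lambda\to 0$---does not work: $(S+\lambda I)X=SX+\lambda X$, so $\sigma\bigl((S+\lambda I)X\bigr)$ is not a translate of $\sigma(SX)$ unless $X=I$, and the shifted pair need not satisfy any spectral identity at all. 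A correct reduction is either to check that the Bre\v{s}ar--\v{S}penko argument only ever tests nonzero spectral points (invertibility of $1-ax$), which is presumably the ``adaptation'' the authors have in mind, or to pass to the semisimple Banach algebra $\vm\oplus\C$, where $\sigma\bigl((S,0)(X,\mu)\bigr)=\sigma(SX)\cup\{0\}=\sigma(TX)\cup\{0\}=\sigma\bigl((T,0)(X,\mu)\bigr)$ holds for every $(X,\mu)$ and a full-spectrum version applies verbatim.

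Your alternative GNS/Kadison-transitivity argument is left unfinished at precisely the point you yourself flag: the inclusion $\sigma(\pi_\phi(SX))\subseteq\sigma(SX)$ only places $\phi(S)$ and $\phi(T)$ in a common, possibly much larger, set, and nothing in the sketch forces $\phi(S)=\phi(T)$; the appeal to Glimm's lemma is not carried out. The preliminary observations (equality of norms via $X=S^*$ and Jacobson's identity) are correct but play no role downstream. In short, the proposal points at the same theorem the paper cites, but neither of its two proposed completions actually closes the argument as written.
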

\begin{proof}
The proof is an adaptation of the one of  Theorem 2.6 in \cite{brevsar2012determining}
and will be omitted
\end{proof}
\begin{thm}
  \label{MA=M}
  Let $\vm$ be a von Neumann algebra and $A\in\vm$ be a well supported. If two elements $S, T\in\vm^A$ satisfy $\sigma_A(SX)=\sigma_A(TX)$ for all $X\in\vm^A$, if and only if
$AS=AT$.
\end{thm}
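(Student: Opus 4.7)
The plan is to reduce everything to the hereditary von Neumann subalgebra $P\vm P$, invoke the characterization $\sigma_A(Y)=\sigma(PY,P\vm P)$ of Theorem \ref{inclu} (valid because $A$ is well-supported), and then finish by applying Lemma \ref{eqs} inside $P\vm P$.

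For the easy direction ($\Leftarrow$), I would observe that since $A$ is well-supported the Moore--Penrose inverse $A^\dagger$ lies in $\vm$ and satisfies $A^\dagger A=AA^\dagger=P$. Hence $AS=AT$ implies $PS=A^\dagger AS=A^\dagger AT=PT$, so $PSX=PTX$ for every $X\in\vm^A$. Since $SX,TX\in\vm^A$ (as $\vm^A$ is an algebra), Theorem \ref{inclu} gives $\sigma_A(SX)=\sigma(PSX,P\vm P)=\sigma(PTX,P\vm P)=\sigma_A(TX)$.

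For the main direction ($\Rightarrow$), set $a:=PS$ and $b:=PT$. Since $S,T\in\vm^A$, the remark $PY=PYP$ for $Y\in\vm^A$ (established in the proof of Theorem \ref{nempty}(1)) gives $a,b\in P\vm P$. For any $X\in\vm^A$, the same remark yields $PSX=PSPX=a\,(PX)$ and similarly $PTX=b\,(PX)$, with $PX\in P\vm P$. By Theorem \ref{inclu} the hypothesis therefore rewrites as
\begin{equation*}
\sigma\bigl(a\,(PX),P\vm P\bigr)=\sigma\bigl(b\,(PX),P\vm P\bigr)\qquad\text{for every }X\in\vm^A.
\end{equation*}
The crucial observation is that, because $A$ is well-supported, the paper has already noted that $P\vm P=A^{1/2}\vm A^{1/2}$; any $Y=A^{1/2}UA^{1/2}\in P\vm P$ admits $U^*A\in\vm$ as an $A^{1/2}$-adjoint (since $Y^*A^{1/2}=A^{1/2}U^*A=A^{1/2}(U^*A)$), so $P\vm P\subseteq\vm^A$. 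Consequently the map $X\mapsto PX$ from $\vm^A$ to $P\vm P$ is surjective (given $Y\in P\vm P\subseteq\vm^A$, take $X=Y$, whence $PX=Y$). Therefore
\begin{equation*}
\sigma(ac,P\vm P)=\sigma(bc,P\vm P)\qquad\text{for every }c\in P\vm P.
\end{equation*}
Applying Lemma \ref{eqs} inside the von Neumann algebra $P\vm P$ (for which it is stated) yields $a=b$, i.e. $PS=PT$. Multiplying on the left by $A$ and using $AP=A$, I conclude $AS=APS=APT=AT$.

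The only substantive step is verifying $\{PX:X\in\vm^A\}=P\vm P$, which is precisely where well-supportedness of $A$ is used (via $P\vm P=A^{1/2}\vm A^{1/2}$); without it the image could be strictly smaller and Lemma \ref{eqs} would not apply. Everything else reduces to the algebraic identities $PY=PYP$ for $Y\in\vm^A$ and $AP=A$.
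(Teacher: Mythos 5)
Your proof is correct, and while it shares the overall skeleton of the paper's argument (reduce to the corner $P\vm P$ via Theorem \ref{inclu}, then invoke Lemma \ref{eqs}), the execution of both directions is genuinely different. For the converse, the paper passes from $\sigma(PSX)\setminus\{0\}=\sigma(PTX)\setminus\{0\}$ to $\sigma(ASX)\setminus\{0\}=\sigma(ATX)\setminus\{0\}$ for all $X\in\vm$ by substituting $X\mapsto XA$ and using Jacobson's lemma $\sigma(ab)\setminus\{0\}=\sigma(ba)\setminus\{0\}$, and then applies Lemma \ref{eqs} in the ambient algebra $\vm$ to get $AS=AT$ directly; you instead stay entirely inside the von Neumann algebra $P\vm P$, verify that $X\mapsto PX$ maps $\vm^A$ \emph{onto} $P\vm P$ (which is the right place to use well-supportedness, via $P\vm P=A^{1/2}\vm A^{1/2}\subseteq\vm^A$), apply Lemma \ref{eqs} there to get $PS=PT$, and only then multiply by $A$. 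Your route buys a cleaner quantifier bookkeeping — the paper's version has to arrange that the spectral identity holds for all $X\in\vm$ rather than just $X\in\vm^A$ before Lemma \ref{eqs} can be cited, which is exactly what the $X\mapsto XA$ substitution is for — at the cost of the extra (but easy) surjectivity verification. For the forward direction, the paper argues via the pure-state sets $\mathcal{P}_A^T=\mathcal{P}_A^S$ and Theorem \ref{refine}, whereas your computation $PS=A^\dagger AS=A^\dagger AT=PT$ followed by Theorem \ref{inclu} is more elementary and arguably preferable. Both proofs are sound.
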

\begin{proof}
Assume that $AS=AT$ and then $A^{1/2}S=A^{1/2}T$. Observe that $\mathcal{P}_A^T= \mathcal{P}_A^S$. To see why this let $\tau\in\mathcal{P}_A^T$ such that $\tau(ATX)=\tau(AT)\tau(AX)$, for all $X\in A^{1/2}\vm A^{1/2}$ then $\tau\in \mathcal{P}_A^T$. If   $\tau(AXT)=\tau(AT)\tau(AX)$, for all $X\in A^{1/2}\vm A^{1/2}$. Since $A^{1/2}X=L_XA^{1/2}$ for some $L_X\in\vm^A$ then
\begin{eqnarray*}
  \tau(AS)\tau(AX)=\tau(AT)\tau(AX)=\tau(AXT) &=& \tau(A^{1/2}L_XA^{1/2}T) \\
    &=& \tau(A^{1/2}L_XA^{1/2}S)=\tau(AXS).
\end{eqnarray*} Hence $\tau\in \mathcal{P}_A^T$ and then
   $\mathcal{P}_A^T\subseteq \mathcal{P}_A^S$. Similarly, we have $\mathcal{P}_A^S\subseteq \mathcal{P}_A^T$. This together with Theorem \ref{refine} entail that $\sigma_A(S)=\sigma_A(T)$. Since the condition $AS=AT$ implies $ATX=ASX$ for any $X\in\vm^A$, by a similar reasoning one can prove that $\sigma(SX)\backslash\{0\}=\sigma(TX)\backslash\{0\}$ for all $X\in\vm$.

   For the converse assume that $\sigma_A(SX)=\sigma_A(TX)$ for all $X\in\vm^A$. Using Proposition \ref{mr1} we get $\sigma(PSX)\backslash\{0\}=\sigma(PTX)\backslash\{0\}$ for all $X\in\vm^A$. Keeping in mind that $AP=PA=A$ and that $XA\in\vm^A$ for any $X\in\vm$, it follows from \cite[Lemma 3.1.2]{aupetit1991primer} that $\sigma(ASX)\backslash\{0\}=\sigma(ATX)\backslash\{0\}$ for all $X\in\vm^A$. Lemma \ref{eqs} implies that $AS=AT$. The proof is thus complete. \end{proof}

   For the condition $r_A(SX)=r_A(TX)$ for all $X\in\vm^A$, we have the following.
   \begin{thm}Let $A$ be a well supported element of $\vm$.
     Assume further that $\vm$ is a prime  von Neumann algebra and let $S, T\in\vm^A$. Then $r_A(SX)\le r_A(TX)$ for all $X\in\vm^A$  if and only if  there exists a scalar $\alpha\in\C$ such that $|\alpha|\le 1$ and $AS=\alpha AT$.
 \label{ZKHT}  \end{thm}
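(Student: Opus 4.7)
The plan is to push the $A$-spectral-radius inequality down to the corner algebra $P\vm P$ and then invoke a spectral-dominance theorem for prime Banach algebras.

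\emph{Sufficiency.} Starting from $AS=\alpha AT$ with $|\alpha|\le 1$, I would use that $A^{\dagger}A=P$ (valid because $A$ is well-supported) to obtain $PSX=A^{\dagger}(AS)X=\alpha A^{\dagger}(AT)X=\alpha PTX$ for every $X\in\vm^A$. Proposition \ref{mr1} then delivers
\[\sigma_A(SX)\setminus\{0\}=\sigma(PSX)\setminus\{0\}=\alpha\,\sigma(PTX)\setminus\{0\}=\alpha\,\sigma_A(TX)\setminus\{0\},\]
so taking moduli yields $r_A(SX)=|\alpha|\,r_A(TX)\le r_A(TX)$.

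\emph{Necessity.} Proposition \ref{mr1} again translates the hypothesis into $r(PSX)\le r(PTX)$ for every $X\in\vm^A$. Because $A$ is well-supported, $P\vm P=A^{1/2}\vm A^{1/2}$ is a unital von Neumann subalgebra of $\vm$; moreover $P\vm P\subseteq\vm^A$, since any $Y=PYP\in P\vm P$ satisfies $Y^{*}AY\le\|Y\|^{2}A$ and hence belongs to $\vm^A$ by Theorem \ref{th1}. Restricting the inequality to $X=Y\in P\vm P$, and recalling that the spectra computed in $P\vm P$ and in $\vm$ differ at most by $\{0\}$ (see \cite[Theorem 1.6.15]{rickart1960general}), I would obtain $r((PS)Y)\le r((PT)Y)$ inside $P\vm P$. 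The corner also inherits primeness from $\vm$: if $a,b\in P\vm P$ satisfy $a(P\vm P)b=\{0\}$, then for every $x\in\vm$ one has $axb=a(PxP)b=0$, so $a\vm b=\{0\}$, forcing $a=0$ or $b=0$. A standard spectral-dominance result for prime Banach algebras, in the spirit of \cite{brevsar2012determining}, then produces a scalar $\alpha\in\C$ with $|\alpha|\le 1$ such that $PS=\alpha PT$. Left-multiplying by $A$ and invoking $AP=A$ gives $AS=\alpha AT$.

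The main obstacle will be the last step: the spectral-dominance theorem for prime Banach algebras needs both primeness and a Banach-algebra ambient, while $\vm^A$ itself may fail to be semi-simple (as the remark after Corollary \ref{corn2} shows). Transferring the whole discussion into the corner $P\vm P$, where primeness, norm closedness, and the identity $AP=A$ all cooperate, is therefore the crucial maneuver, and it relies essentially on $A$ being well-supported.
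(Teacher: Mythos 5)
Your argument is correct in substance and shares the paper's key idea --- convert the $A$-spectral-radius dominance into an ordinary spectral-radius dominance and invoke the Bre\v{s}ar--\v{S}penko theorem --- but you apply that theorem in a different ambient algebra. The paper stays in $\vm$ itself: from $r_A(S(XA))\le r_A(T(XA))$ for the test elements $XA\in\vm^A$ ($X\in\vm$), Proposition \ref{mr1} and the identity $\sigma(ab)\setminus\{0\}=\sigma(ba)\setminus\{0\}$ give $r(ASX)\le r(ATX)$ for all $X\in\vm$, and \cite[Theorem 3.7]{brevsar2012determining} applied in the prime algebra $\vm$ yields $AS=\alpha AT$ in one stroke. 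You instead descend to the corner $P\vm P$, which costs you two extra verifications (that $P\vm P\subseteq\vm^A$ and that the corner inherits primeness --- both of which you supply, the latter correctly) but buys a cleaner conclusion $PS=\alpha PT$ that you then lift by multiplying with $A$. Both routes are legitimate; the paper's is shorter because it never needs the corner to be admissible as a test set. One technical slip in your version: the inequality $Y^{*}AY\le\|Y\|^{2}A$ for $Y\in P\vm P$ is false in general (it amounts to $\|Y\|_A\le\|Y\|$, which already fails for weighted shifts); what is true, and suffices, is $Y^{*}AY\le\|A\|\,\|Y\|^{2}P\le\bigl(\|A\|\|Y\|^{2}/\alpha_{0}\bigr)A$, where $\alpha_{0}>0$ bounds $\sigma(A)\setminus\{0\}$ from below --- this is exactly where well-supportedness enters. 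Also, membership in $\vm^A$ should be deduced from the Douglas factorization (Theorem \ref{lemDouglas} together with Lemma \ref{adjoint}) rather than from Theorem \ref{th1}, which characterizes $A$-invertibility, not $A$-boundedness. With those two corrections your proof goes through.
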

   \begin{proof}
     It i s clear from Theorem \ref{MA=M}  that if $AS=\alpha AT$ for some scalar $\alpha\in\C$ such that $|\alpha|\le 1$ that $r_A(SX)\le r_A(TX)$ for all $X\in\vm$. For the converse, by a similar reasoning as above we see that the condition $r_A(SX)\le r_A(TX)$ for all $X\in\vm$ implies that $r(ASX)\le r(ATX)$ for all $X\in\vm$. By \cite[Theorem 3.7]{brevsar2012determining} the result follows.
   \end{proof}

  \begin{rem}
    Assume that $A$ is not well supported. If $S$ and $T$ are in $\{A\}'$, then the condition $\sigma_A(SX)=\sigma_A(TX)$ for all $X\in\vm^A$, will imply that $AS=AT$. Indeed: Corollary \ref{com} imply that $\sigma(ASX)\backslash\{0\}=\sigma(ATX)$ for any $X\in\{A\}'$. Since $\{A\}'$ is a \cs-algebra then $AS=AT$ as desired.
   \end{rem}

If $\phi:\vm^A\longrightarrow\C$ is multiplicative,  then $\phi(P)=1$. Therefore  $\phi(T)\in\sigma_A(T)$ for every $T \in\vm^A$. We finish this paper with the following theorem which shows that the condition $\phi(T)\in\sigma_A(T)$ for every $T \in\vm^A$ is necessary for $\phi$ to be multiplicative on $\vm^A$.
 The proof is carried out along the same line of the proofs made in \cite{10.2307/1998572}; (see also \cite[Theorem 1.1]{JR}).
\begin{thm}\label{GKZ-A}
   Let $\phi: \vm_A\longrightarrow\C$ be a linear map. If  $\phi(T)\in\sigma_A(T)$ for any $T\in\vm^A$, then   $\phi(TS)=\phi(T)\phi(S)$ for any $S, T\in\vm^A$. In particular if $A$ is in the center of $\vm$, then $\phi$ is a character.
 \end{thm}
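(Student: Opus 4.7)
The plan is to adapt the classical Gleason--Kahane--\.Zelazko argument, with the $A$-spectrum replacing the ordinary spectrum. First, $(\lambda-1)I$ is $A$-invertible exactly when $\lambda\neq 1$, so $\sigma_A(I)=\{1\}$ and the hypothesis forces $\phi(I)=1$. Since $r_A(T)=\sup\{|\lambda|:\lambda\in\sigma_A(T)\}=\lim_n\|T^n\|_A^{1/n}\le\|T\|_A$ by submultiplicativity, one obtains $|\phi(T)|\le r_A(T)\le\|T\|_A$, so $\phi$ is continuous with respect to the seminorm $\|\cdot\|_A$ on $\vm^A$.

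Fix $T\in\vm^A$. Submultiplicativity of $\|\cdot\|_A$ combined with its lower semicontinuity under norm limits ensures that the exponential series $\exp(zT)=\sum_{n\ge 0}(zT)^n/n!$ defines an element of $\vm^A$ with $\|\exp(zT)\|_A\le e^{|z|\|T\|_A}$. Continuity of $\phi$ then makes
$$F(z):=\phi(\exp(zT))=\sum_{n\ge 0}\frac{z^n\phi(T^n)}{n!}$$
an entire function satisfying $|F(z)|\le e^{|z|\|T\|_A}$ and $F(0)=1$. Because $\exp(-zT)$ is an $A$-inverse of $\exp(zT)$, we have $0\notin\sigma_A(\exp(zT))$, so $F$ is nowhere zero. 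Hadamard's factorisation theorem then yields $F(z)=e^{\phi(T)z}$, and matching Taylor coefficients gives $\phi(T^n)=\phi(T)^n$ for every $n\ge 0$. Applying this to $T+S$ and expanding produces the Jordan identity $\phi(TS+ST)=2\phi(T)\phi(S)$.

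For full multiplicativity I assume, as is implicit throughout this section, that $A$ is well-supported, and for $S,T\in\vm^A$ set $G(z):=\phi\bigl(\exp(zT)\,S\,\exp(-zT)\bigr)$. Writing $U=\exp(zT)$ and $V=\exp(-zT)$, the identity $PX=PXP$ valid for every $X\in\vm^A$ combined with associativity gives $P(USV)=(PU)(PS)(PV)$ in $P\vm P$, while $(PU)(PV)=PUV=P$ shows that $PU$ is invertible in $P\vm P$ with inverse $PV$. Theorem \ref{inclu} therefore yields
$$\sigma_A(USV)=\sigma\bigl((PU)(PS)(PV),P\vm P\bigr)=\sigma(PS,P\vm P)=\sigma_A(S),$$
so $G$ takes values in the compact set $\sigma_A(S)$. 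Since $G$ is entire (by the same $\|\cdot\|_A$-continuity argument as for $F$), Liouville's theorem forces $G$ to be constant equal to $G(0)=\phi(S)$, hence $0=G'(0)=\phi(TS-ST)$; combining this with the Jordan identity delivers $\phi(TS)=\phi(T)\phi(S)$. Finally, when $A$ is central, $T^*$ is an $A^{1/2}$-adjoint of every $T\in\vm$, so Lemma \ref{adjoint} gives $\vm^A=\vm$; moreover $\|T\|_A\le\|T\|$, so $\phi$ becomes a norm-continuous multiplicative linear functional on $\vm$, that is, a character. The main obstacle is the similarity invariance $\sigma_A(USV)=\sigma_A(S)$: it hinges essentially on Theorem \ref{inclu} and hence on $A$ being well-supported; without that hypothesis one would need an independent argument that conjugation by $A$-invertible elements of $\vm^A$ preserves $\sigma_A$.
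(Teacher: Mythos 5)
Your argument is the classical exponential\--function proof of the Gleason--Kahane--\.Zelazko theorem (Hadamard factorisation for $F(z)=\phi(\exp(zT))$, then Liouville applied to $z\mapsto\phi(\exp(zT)S\exp(-zT))$ to kill the commutator), and this is a genuinely different route from the paper's. The paper follows Roitman--Sternfeld: it forms the polynomial $p(\lambda)=\phi\left((\lambda I-T)^n\right)$, shows its roots lie in $\sigma_A(T)$, compares the first two symmetric functions of the roots with $\phi(T)$ and $\phi(T^2)$ to obtain $\phi(T^2)=\phi(T)^2$ as $n\to\infty$, and then upgrades the resulting Jordan identity to full multiplicativity via the normalisation $\phi(S)=0$, $\phi(ST)=1$ and the computation $2\phi(S)\phi(TST)=\phi(ST)^2+\phi(TS)^2=2$. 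That route needs only that $\sigma_A(T)$ is a nonempty bounded set (Corollary \ref{spectreborne}) and that products of $A$-invertible elements are $A$-invertible; it never uses $\|\cdot\|_A$-continuity of $\phi$ nor similarity invariance of $\sigma_A$.

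This difference matters, because your proof has a genuine gap relative to the statement: Theorem \ref{GKZ-A} is asserted for an arbitrary positive $A$, while you import well-supportedness at two essential points. First, the inequality $|\phi(T)|\le r_A(T)\le\|T\|_A$, on which both the exponential-type bound for $F$ and the $\|\cdot\|_A$-continuity used to interchange $\phi$ with the series rest, is false in general: the weighted-shift example in Section \ref{wellsupported} has $\|T\|_A=\tfrac15$ while $r_A(T)\ge\tfrac25$, so $\sigma_A(T)\nsubseteq\overline{D}(0,\|T\|_A)$. (This part is repairable by working with the bound $r_A(T)\le\max(\|T\|_A,\|L\|_A)$ of \eqref{inclusionspectre}, where $L$ is an $A^{1/2}$-adjoint, and estimating the series for both $T$ and $L$ simultaneously.) Second, the similarity invariance $\sigma_A(USV)=\sigma_A(S)$ is derived from Theorem \ref{inclu}, which again assumes $A$ well-supported; a direct argument exists --- if $V$ is an $A$-inverse of $U$ and $W$ an $A$-inverse of $S-\lambda$, then $UWV$ is an $A$-inverse of $USV-\lambda$, because $X\mapsto PXP$ is multiplicative on $\vm^A$ (as $PX(I-P)=0$ for every $X\in\vm^A$) --- but you do not give it. As written, your proof establishes the theorem only for well-supported $A$, which is strictly weaker than what the paper states and proves.
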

 \begin{proof} Suppose that $\phi: \vm_A\longrightarrow\C$ is a linear functional satisfying $\phi(T)\in\sigma_A(T)$ for all $T\in\vm^A$.
   Note that $\phi(I)=1$ since $\phi(I)\in\sigma_A(I)=\{1\}$.
\\
   Let $T\in\vm^A$. Given $n\ge1$ and $\lambda\in\C$ and denote by $p$ the polynomial $p(\lambda)=\phi\left((\lambda I-T)^n\right)$. Let
$\lambda_1, \cdots, \lambda_n$ be the roots of $p$. Since $0 = p(\lambda_i) = \phi\left((\lambda_i I- T)^n\right)\in\sigma_A\left((\lambda_i I- T)^n\right)$,  then $(\lambda_i I- T)^n$ is not $A$-invertible. As then $(\lambda_i I- T)$ is also not $A$-invertible. Thus$\lambda_i\in\sigma_A(T)$ for $i = 1,\cdots n$. Keeping in mind that $\phi(I)=1$, we can write
\begin{eqnarray*}
  p(\lambda) =\displaystyle\prod_{i=1}^{n} (\lambda-\lambda_1) &=& \lambda^n -\lambda^{n-1}\sum_{i=1}^{n}\lambda_i+\lambda^{n-2}\sum_{i\neq j}\lambda_i\lambda_j+\cdots \\
    &=& \lambda^n-n\phi(T)\lambda^{n-1}+\frac{n(n-1)}{2}\phi(T^2)\lambda^{n-2}+\cdots.
\end{eqnarray*}
Identifying terms,  we get
\[
   n\phi(T)=\sum_{i=1}^{n}\lambda_i\ \text{and}\ \sum_{i\neq j}\lambda_i\lambda_j=\frac{n(n-1)}{2}\phi(T^2).\]
It follows, in particular, that
\[\left(n\phi(T)\right)^2 - n(n-1)\phi(T^2)=\sum_{i=1}^{n}\lambda_i^2.\]
Whence
\begin{equation}\label{gk}
  \left|n^2\phi(T)^2  - n(n-1)\phi(T^2)\right|\le n\, r_A(T)^2.
\end{equation}
Note that $r_A(T)$ is finite by Theorem \ref{spectreborne}. Dividing through by $n^2$ in \eqref{gk} and letting $n \to\infty$, we obtain $\phi(T^2)=\phi(T)^2$. Replacing $T$ by $T+S$, yields
\[\phi(ST+TS)=2\phi(S)\phi(T), (T, S\in\vm^A).\]
Finally, assume to the contrary that there exist
$S, T\in\vm^A$ such that $\phi(ST)\neq \phi(S)\phi(T)$. Replacing $S$ by $\alpha S+\beta I$ for some suitably  scalars $\alpha$ and $\beta$ if necessary, we may suppose that $\phi(S) = 0$ and
$\phi(ST) = 1$. Since  $\phi(ST)+\phi(TS)=2\phi(S)\phi(T)$, then $\phi(TS) =-1$. Now, observe that
$\phi(S)\phi(TST)=0$.  On the other hand
\[ 2\phi(S)\phi(TST)=\phi(STST)+\phi(TSTS)=\phi(ST)^2+\phi(TS)^2 = 2.\]
This contradiction finishes  the proof.
 \end{proof}
 \begin{cor}
   Let $\phi: \vm_A\longrightarrow\C$ be a linear map. If $A$ is well supported and $\phi(T)\in\sigma_A(T)$ for any $T\in\vm^A$, then $\phi$ is the restriction of a state $f\in\sm$ such that $\phi(A)\neq 0$ and $\phi(TS)=\phi(T)\phi(S)$ for any $S, T\in\vm^A$.
 \end{cor}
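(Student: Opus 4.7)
My plan is to combine Theorem \ref{GKZ-A}, which already yields multiplicativity of $\phi$ on $\vm^A$, with an explicit extension of $\phi$ to a state on $\vm$ via $\hat{\phi}(X) := \phi(A^{1/2} X A^{1/2})/\phi(A)$. First I would verify $\phi(A) \neq 0$: since $A$ is well-supported, Corollary \ref{cor1}(2) gives $\sigma_A(A) = \sigma(A)\backslash\{0\}$, so the hypothesis $\phi(A) \in \sigma_A(A)$ forces $\phi(A) \neq 0$. The multiplicativity $\phi(TS) = \phi(T)\phi(S)$ on $\vm^A$ is exactly the content of Theorem \ref{GKZ-A}.

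Next I would define the candidate extension $\hat{\phi}$ and check that it extends $\phi$. That $A^{1/2} X A^{1/2} \in \vm^A$ for every $X \in \vm$ follows from Lemma \ref{adjoint} by taking $L := X^*A \in \vm$ as an $A^{1/2}$-adjoint:
\[A^{1/2}(A^{1/2} X A^{1/2}) = AXA^{1/2} = L^*A^{1/2}.\]
Since $A^{1/2} \in \vm^A$ (it is its own $A^{1/2}$-adjoint), multiplicativity gives $\phi(A^{1/2})^2 = \phi(A)$ and hence, for any $T\in\vm^A$,
\[\hat{\phi}(T) = \frac{\phi(A^{1/2} T A^{1/2})}{\phi(A)} = \frac{\phi(A^{1/2})\,\phi(T)\,\phi(A^{1/2})}{\phi(A)} = \phi(T),\]
so $\hat{\phi}|_{\vm^A} = \phi$. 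Linearity of $\hat{\phi}$ is clear from the linearity of $X\mapsto A^{1/2}XA^{1/2}$ and of $\phi$, and $\hat{\phi}(I) = \phi(A)/\phi(A) = 1$.

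The main obstacle is positivity of $\hat{\phi}$: for $X \in \vm^+$ I need $\phi(A^{1/2} X A^{1/2}) \geq 0$ (and in particular $\phi(A) > 0$). Since $A^{1/2} X A^{1/2}$ is positive and lies in $\vm^A$, it suffices to show $\sigma_A(S) \subseteq [0,\infty)$ for every positive $S \in \vm^A$. The key observation is that a self-adjoint $S \in \vm^A$ commutes with $P$: by Lemma \ref{adjoint}, $S$ preserves $\NN(A)$, and self-adjointness of $S$ on $\h$ forces $S$ to also preserve $\NN(A)^\perp = \overline{\RR(A)}$; hence $PS = SP = PSP$. For a positive $S$, $PSP$ is then positive in the \cs-algebra $P\vm P$, and Proposition \ref{mr1} yields
\[\sigma_A(S)\backslash\{0\} = \sigma(PS,P\vm P)\backslash\{0\} = \sigma(PSP,P\vm P)\backslash\{0\} \subseteq [0,\infty).\]
Combining these steps, $\hat{\phi}$ is a positive linear functional on $\vm$ with $\hat{\phi}(I)=1$, hence a state in $\sm$ extending $\phi$, which is the desired conclusion.
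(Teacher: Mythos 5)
Your proposal is correct, and the first two steps (multiplicativity via Theorem \ref{GKZ-A} and $\phi(A)\neq 0$ via $\sigma_A(A)=\sigma(A)\setminus\{0\}$) coincide with the paper's. Where you genuinely diverge is the extension step. The paper does not construct anything: it observes that Corollary \ref{corn1} gives $|\phi(T)|\le r_A(T)\le r(T)\le\|T\|$ on $\vm^A$, so $\phi$ has norm $1=\phi(I)$, and then a Hahn--Banach extension of the same norm is automatically positive by the standard criterion $\|f\|=f(I)$. You instead build the explicit extension $\hat{\phi}(X)=\phi(A^{1/2}XA^{1/2})/\phi(A)$, verify it restricts to $\phi$ using $\phi(A^{1/2})^2=\phi(A)$, and prove positivity directly by showing that every positive $S\in\vm^A$ has $\sigma_A(S)\subseteq[0,\infty)$ (via the commutation of self-adjoint elements of $\vm^A$ with $P$ and Theorem \ref{inclu}; your citation of Proposition \ref{mr1} is slightly off but Theorem \ref{inclu} gives exactly what you need, and the extra point $0$ is harmless). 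Both routes use well-supportedness of $A$ essentially --- the paper through $r_A\le r$, you through $\sigma_A(T)=\sigma(PT,P\vm P)$. Your argument is longer but buys more: it exhibits a canonical positive extension of the specific form $X\mapsto\phi(A^{1/2}XA^{1/2})/\phi(A)$, rather than an unspecified Hahn--Banach extension, and the intermediate fact that positive elements of $\vm^A$ have nonnegative $A$-spectrum is of independent interest. The paper's argument is shorter and avoids the commutation lemma for self-adjoint elements of $\vm^A$.
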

 \begin{proof}
Note that $\phi(TS)=\phi(T)\phi(S)$ for any $S, T\in\vm^A$,  by Theorem \ref{GKZ-A}. Further,    $\phi(A)$ is not zero since $\phi(A)\in\sigma_A(A)$ and $\sigma_A(A)=\sigma(A)\setminus\{0\}$, by Corollary \ref{cor1}. Finally using Corollary \ref{corn1}, we see that
  $|\phi(T)|\le r_A(T)\le r(T)\le \|T\|$ for every $T\in\vm^A$. By Hahn-Banach Theorem there exists $f$  in $\sm$ so that $f=\phi$  on $\vm^A$.
 \end{proof}

\vskip 0.03 cm
\noindent
\textbf{Competing interests.} The authors declare that they have no conflict of
interest.

\begin{thebibliography}{10}

\bibitem{amz}
A.~Alahmari, M.~Mabrouk, and A.~Zamani, \emph{Further results on the
  $a$-numerical range in \cs-algebras}, Banach J. Math. Anal. \textbf{16}
  (2022), no.~25.

\bibitem{arias2008metric}
M-L. Arias, G.~Corach, and M.~C. Gonzalez, \emph{Metric properties of
  projections in semi-{H}ilbertian spaces}, Integral Equations Operator Theory
  \textbf{62} (2008), no.~1, 11--28.

\bibitem{arias2009lifting}
M-L. Arias, G.~Corach, and M-C. Gonzalez, \emph{Lifting properties in operator
  ranges}, Acta Sci. Math. (Szeged) \textbf{75} (2009), 635--65.

\bibitem{Aupetitbook2}
B.~Aupetit, \emph{Propri\'et\'es spectrales des alg\`ebres de {B}anach}, vol.
  735, Springer-Verlag, Berlin New York, 1979.

\bibitem{aupetit1991primer}
\bysame, \emph{A primer on spectral theory}, Universitext (1979),
  Springer-Verlag, 1991.

\bibitem{Aupetit1996}
B.~Aupetit and H.~Mouton, \emph{Trace and determinant in {B}anach algebras},
  Stud. Math. \textbf{121} (1996), no.~2, 115--136.

\bibitem{BS}
H.~Baklouti and S.~Namouri, \emph{Spectral analysis of bounded operators on
  semi-hilbertian spaces}, Banach J. Math. Anal. (2022).

\bibitem{Behncke1973NilpotentEI}
H.~H. Behncke, \emph{Nilpotent elements in {B}anach algebras}, no.~1.

\bibitem{blackadar2006operator}
B.~Blackadar, \emph{Operator algebras: Theory of \cs-algebras and von {N}eumann
  algebras}, Encyclopaedia of Mathematical Sciences, no. vol.~13, Springer,
  2006.

\bibitem{Bourhim2003OnTL}
A.~Bourhim, \emph{On the local spectral properties of weighted shift
  operators}, Stud. Math. \textbf{163} (2003), 41--69.

\bibitem{mabrouk2020}
A.~Bourhim and M.~Mabrouk, \emph{$a$-numerical range on \cs-algebras},
  Positivity (2021).

\bibitem{surveyLatif}
A.~Bourhim and J.~Mashreghi, \emph{A survey on preservers of spectra and local
  spectra, invariant subspaces of the shift operator}, pp.~45--98, Contemp.
  Math., vol. 638, Amer. Math. Soc., Providence, RI, 2015, 2015.

\bibitem{brevsar2012determining}
M.~Bre{\v{s}}ar and {\v{S}}.~{\v{S}}penko, \emph{Determining elements in
  {B}anach algebras through spectral properties}, J. Math. Anal. Appl.
  \textbf{393} (2012), no.~1, 144--150.

\bibitem{caradus1978generalized}
S.R. Caradus, \emph{Generalized inverses and operator theory}, Queen's papers
  in pure and applied mathematics, Queen's University, 1978.

\bibitem{Aniello}
E.~D’Aniello and M.~Maiuriello, \emph{On the spectrum of weighted shifts},
  Rev. Real Acad. Cienc. Exactas Fis. Nat. Ser. A-Mat. \textbf{117} (2023),
  no.~4.

\bibitem{KM2023}
K.~Dhifaoui and M.~Mabrouk, \emph{Maps preserving maximal numerical range of
  operators products}, Quaest. Math. (2023).

\bibitem{Douglas1966}
R.~G. Douglas, \emph{On majorization, factorization, and range inclusion of
  operators on {H}ilbert space}, Proc. Amer. Math. Soc. \textbf{17} (1966),
  413--415.

\bibitem{kaplansky_1954}
I.~Kaplansky, \emph{Ring isomorphisms of {B}anach algebras}, Canad. J. Math.
  \textbf{6} (1954), 374–381.

\bibitem{Kulkarni2000ACO}
S.~Kulkarni and M.~Nair, \emph{A characterization of closed range operators},
  Indian J. Pure Appl. Math. (2000).

\bibitem{mz2023}
M.~Mabrouk and A.~Zamani, \emph{Spectral permanance theorem for the
  $a$-spectrum}, arXiv:submit/4869150 [math.FA] 29 Apr 2023.
 

\bibitem{JR}
J.~Mashreghi and T.~Ransford, \emph{{G}leason–{K}ahane–\.{Z}elazko theorems
  in function spaces}, Acta Sci. Math. \textbf{84} (2018), 227–238.

\bibitem{Mbekhta1992ConormeEI}
M.~Mbekhta, \emph{Conorme et inverse g{\'e}n{\'e}ralis{\'e} dans les
  \cs-alg{\`e}bres}, Can. Math. Bull. \textbf{35} (1992), 515 -- 522.

\bibitem{murphy1990c}
G.~Murphy, \emph{{C}*-algebras and operator theory}, Elsevier Science, 1990.

\bibitem{Nayak}
S.~Nayak, \emph{The {D}ouglas lemma for von {N}eumann algebras and some
  applications}, Adv. Oper. Theory \textbf{6} (2021), no.~47.

\bibitem{ransford_white_2001}
T.~Ransford and N.~White, \emph{Spectral characterization of algebraic
  elements}, Bull. London Math. Soc. \textbf{33} (2001), no.~1, 77–82.

\bibitem{rickart1960general}
C.E. Rickart, \emph{General theory of {B}anach algebras}, University series in
  higher mathematics, Van Nostrand, 1960.

\bibitem{10.2307/1998572}
M.~Roitman and Y.~Sternfeld, \emph{When is a linear functional
  multiplicative?}, Trans Am Math Soc. \textbf{267} (1981), no.~1, 111--124.

\end{thebibliography}
\end{document}